\documentclass[12pt,english]{article}
\pdfoutput=1
\usepackage[T1]{fontenc}
\usepackage[latin9]{inputenc}
\usepackage{geometry}
\geometry{verbose,tmargin=2cm,bmargin=2cm,lmargin=2cm,rmargin=2cm}
\usepackage{babel}
\usepackage{float}
\usepackage{units}
\usepackage{amsmath}
\usepackage{amsthm}
\usepackage{amssymb}
\usepackage{setspace}
\PassOptionsToPackage{normalem}{ulem}
\usepackage{ulem}
\onehalfspacing
\usepackage[unicode=true,pdfusetitle,
 bookmarks=true,bookmarksnumbered=false,bookmarksopen=false,
 breaklinks=false,pdfborder={0 0 1},backref=page,colorlinks=false]
 {hyperref}
\hypersetup{
 linkcolor=blue,citecolor=blue}

\makeatletter

\floatstyle{ruled}
\newfloat{algorithm}{tbp}{loa}
\providecommand{\algorithmname}{Algorithm}
\floatname{algorithm}{\protect\algorithmname}

\numberwithin{equation}{section}
\numberwithin{figure}{section}
\theoremstyle{plain}
\newtheorem{thm}{\protect\theoremname}[section]
\theoremstyle{definition}
\newtheorem{defn}[thm]{\protect\definitionname}
\theoremstyle{definition}
\newtheorem{example}[thm]{\protect\examplename}
\theoremstyle{plain}
\newtheorem{prop}[thm]{\protect\propositionname}
\ifx\proof\undefined
\newenvironment{proof}[1][\protect\proofname]{\par
	\normalfont\topsep6\p@\@plus6\p@\relax
	\trivlist
	\itemindent\parindent
	\item[\hskip\labelsep\scshape #1]\ignorespaces
}{%
	\endtrivlist\@endpefalse
}
\providecommand{\proofname}{Proof}
\fi
\theoremstyle{plain}
\newtheorem{cor}[thm]{\protect\corollaryname}
\theoremstyle{remark}
\newtheorem{rem}[thm]{\protect\remarkname}
\theoremstyle{remark}
\newtheorem{claim}[thm]{\protect\claimname}
\theoremstyle{plain}
\newtheorem{lem}[thm]{\protect\lemmaname}

\usepackage{xcolor}
\hypersetup{
    colorlinks,
    linkcolor={red!50!black},
    citecolor={blue!50!black},
    urlcolor={blue!80!black}
}

\usepackage{hyperref}
\usepackage[hyphenbreaks]{breakurl}

\usepackage{algorithm,algpseudocode}

\makeatother

\providecommand{\claimname}{Claim}
\providecommand{\corollaryname}{Corollary}
\providecommand{\definitionname}{Definition}
\providecommand{\examplename}{Example}
\providecommand{\lemmaname}{Lemma}
\providecommand{\propositionname}{Proposition}
\providecommand{\remarkname}{Remark}
\providecommand{\theoremname}{Theorem}

\begin{document}
\title{Primitivity Testing in Free Group Algebras via Duality}
\author{Matan Seidel, Danielle Ernst-West, and Doron Puder}
\maketitle
\begin{abstract}
Let $K$ be a field and $F$ a free group. By a classical result of
Cohn and Lewin (\cite{Cohn1964}, \cite{Lewin1969}), the free group
algebra $K\left[F\right]$ is a \emph{free ideal ring} (FIR): a ring
over which the submodules of free modules are themselves free, and
of a well-defined rank. Given a finitely generated right ideal $I\leq K\left[F\right]$
and an element $f\in I$, we give an explicit algorithm determining
whether $f$ is part of some basis of $I$. More generally, given
free $K[F]$-modules $M\le N$, we provide algorithms determining
whether $M$ is a free summand of $N$, and whether $N$ admits a
free splitting relative to $M$. These can also be used to obtain
analogous algorithms for free groups $H\le J$.

As an aside, we also provide an algorithm to compute the intersection
of two given submodules of a free $K\left[F\right]$-module.

A key feature of this work is the introduction of a duality, induced
by a matrix with entries in a free ideal ring, between the respective
algebraic extensions of its column and row spaces. 
\end{abstract}
\tableofcontents{}

\section{Introduction}

Free groups form a central topic of research in group theory. A useful
approach in their study is to extend them linearly over a field (or
a commutative ring) to form the associated free group algebra (or
free group ring, respectively): if $K$ is a fixed field and $F$
a free group on $r\in\mathbb{Z}_{\geq1}$ generators, their associated
free group algebra, denoted $K\left[F\right]$, is the $K$-vector
space having $F$ as a basis, endowed with the product defined by
$\left(\sum_{u\in F}\alpha_{u}u\right)\cdot\left(\sum_{v\in F}\beta_{v}v\right)=\sum_{u,v\in F}\alpha_{u}\beta_{v}uv$.
The resulting $K$-algebra structure enables the application of tools
from linear algebra, ring theory and homological algebra, leading
to significant results such as the Stallings-Swan Theorem \cite{Stallings1968,Swan1969},
which characterizes free groups by their cohomological dimension.
Group algebras may also serve to solve open problems concerning the
underlying groups: e.g., the compressed-inert conjecture in free
groups in \cite{JaikinZapirain2024}.

The usefulness of free group algebras in this context is also due
to the similarities they share with free groups: In analogy to the
classical Nielsen-Schreier Theorem, which guarantees that subgroups
of free groups are themselves free, a theorem due to Cohn \cite{Cohn1964}
and Lewin \cite{Lewin1969} shows that one-sided ideals in the free
group algebra (over any field) are free as modules over it\footnote{\label{fn:Cohn Vs. Lewin}Some claim that the first correct proof
of this result is due to Lewin -- see \cite[Footnote 5]{HogAngeloni2006}.}. However, while many well-known tools and algorithms exist for free
groups --- most notably Whitehead's algorithms for detecting primitive
words and automorphic equivalence \cite{Whitehead1936,Whitehead1936a}
and Stallings' core graphs \cite{Stallings1983} --- the development
of analogous tools for free group algebras has been more limited.
A significant contribution in this area is an algorithm due to Rosenmann
\cite{Rosenmann1993}, which constructs both a free basis and a Gr�bner
basis for finitely generated one-sided ideals in free group algebras.
This work fills some of these gaps in the realm of free group algebras:
it presents an analog of Whitehead's first algorithm, as well as an
algorithm for finding the intersection of two one-sided ideals. 

This paper arose from concrete computational needs relating to the
theory of word measures: measures on compact groups induced by words
in free groups. The study of word measures on various families of
compact groups exposed interesting phenomena and deep structure (e.g.\ \cite{Puder2015,Magee2019,Magee2024,puder2023stable}).
In a recent paper by the current authors \cite{ErnstWest2024}, we
exposed the relation between free group algebras and word measures
on matrix groups over finite fields. More specifically, the existence
and number of certain ideals in the free group algebra containing
some element imprimitively (namely, so that this element does not
belong to any basis -- see Definition \ref{def: primitivity, free factor, algebraic, algebraic-free decomposition})
turn out to control certain moments of these word measures \cite[Thm.~1.4 and Conj.~1.6]{ErnstWest2024}.
In particular, the following question appears in (ibid, Sections 5
and 7): Is there an algorithm for determining the primitivity of a
given element in a given ideal of a free group algebra? The algorithms
we provide in the current paper resolve this question and related
ones.

\subsection{Extensions of Free Modules over Free Ideal Rings}

Throughout the article, the term \textquotedbl module\textquotedbl{}
refers to either left or right modules, unless an explicit orientation
is indicated. Given an $R$-module $M$ and elements $f_{1},f_{2},...,f_{t}\in M$,
the submodule they generate will be denoted $\sum_{i=1}^{t}f_{i}R$
if $M$ is a right $R$-module and $\sum_{i=1}^{t}Rf_{i}$ if $M$
is a left $R$-module. When the ambient module $M$ and its orientation
are clear from context, we will also use the notation $\left(f_{1},f_{2},...,f_{t}\right)$
for the generated submodule.

A \emph{free ideal ring }or \emph{FIR} is a ring $\mathcal{A}$ whose
one-sided ideals, when considered as $\mathcal{A}$-modules, are free
and have a well-defined rank (see Cohn's \cite{Cohn2006} for an extensive
background). Some examples of free ideal rings include fields, the
ring of integers $\mathbb{Z}$, free associative algebras over a field
\cite[Cor.~2.5.2]{Cohn2006}. Free group algebras form yet another
example by the previously discussed result of Cohn and Lewin (see
also \cite{HogAngeloni2006} for a short and beautiful later proof
by Hog-Angeloni).

Over a free ideal ring, any submodule of a free module is also free
\cite[Thm.~1.5.3]{Cartan1999}. This compels us to adapt the following
definitions from free groups (the original definitions for free groups
appear, for example, in \cite{Miasnikov2007} and \cite{Puder2015},
as well as in Section \ref{sec: algs for free groups} below):
\begin{defn}
\label{def: primitivity, free factor, algebraic, algebraic-free decomposition}Let
$\mathcal{A}$ be a free ideal ring, $N$ a free $\mathcal{A}$-module
and $M$ a submodule of $N$.
\begin{enumerate}
\item An element $f\in N$ is said to be \uline{primitive} in $N$ if
it is part of some basis of $N$. 
\item $M$ is called a \uline{free facto}r of $N$ if a basis of $M$
can be extended to a basis of $N$. In this case we write $M\leq_{*}N.$
\item The extension $M\leq N$ is called \uline{algebraic} if it admits
no intermediate proper free factor $M\leq L\lneqq_{*}N.$ In this
case we write $M\leq_{\text{alg}}N.$
\item If at least one of $M$ or $N$ is of finite rank, the \uline{algebraic
closure} of the extension $M\leq N$ is the unique free factor of
$N$ which extends $M$ algebraically (see Theorem \ref{Theorem: existence and uniqueness of Algebraic - free decomposition}
for a proof of existence and uniqueness, as well as for some equivalent
definitions).\footnote{The terms we use here are known under various names, depending on
context. Free factors are usually called \emph{direct summands} in
ring theory. In Cohn's book ``Free Ideal Rings'' (\cite{Cohn2006}),
the algebraic closure of $M\leq N$ is simply called the \emph{closure}
of $M$ in $N$, and if $M\leq_{\text{alg}}N$ then $M$ is called
a \emph{dense} submodule of $N$. If $M\le N$ is not an algebraic
extension, one could also say that $N$ admits a free splitting relative
to $M$.}
\end{enumerate}
\end{defn}
Our first result is the following:
\begin{thm}
\label{thm: There exists algorithms for alg-free decomp}Given an
extension $M\leq N$ of finitely generated $K\left[F\right]$-submodules
of $K\left[F\right]^{m}$ for some $m\in\mathbb{Z}_{\geq1}$, there
exist algorithms for computing the extension's algebraic closure (Algorithm
\ref{alg: Get-Algebraic-Free}), and consequently for testing if the
extension is algebraic (Algorithm \ref{alg: Is-Algebraic-Extension})
and if $M$ is a free factor of $N$ (Algorithm \ref{alg: Is-Free-Extension}).
\end{thm}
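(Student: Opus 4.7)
The plan is to reduce the question to a matrix computation over $K[F]$ and exploit the column/row duality announced in the abstract. First I would apply Rosenmann's algorithm to compute explicit free bases of $M$ and $N$, of ranks $k$ and $n$ respectively, and express each basis element of $M$ as a $K[F]$-linear combination of the basis of $N$; this yields a matrix $A\in K[F]^{k\times n}$ whose row space represents $M$ inside the identification $N\cong K[F]^{n}$. Under this identification, the algebraic closure of $M$ in $N$ becomes the algebraic closure of the row space of $A$ in $K[F]^{n}$, so it suffices to produce an algorithm in this matrix-theoretic setting.

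The central step is the duality. A nontrivial free splitting of $N$ relative to $M$ is equivalent to a factorization of $A$, after multiplication by invertible matrices on both sides, into a block of the form $(A'\,|\,0)$, the zero columns being the ``split off'' free summand of $N$. By the duality between the algebraic extensions generated by the rows and by the columns of $A$, such a splitting can be detected by analyzing the \emph{column} space of $A$, viewed as a submodule of $K[F]^{k}$, rather than the row space itself. Combined with the weak algorithm in free group algebras, this yields an iterative reduction: identify a column (after an invertible change of basis) that lies in a proper free factor of $K[F]^{n}$ avoided by the others, strip the corresponding summand off, and repeat on the smaller remaining matrix. The uniqueness statement of Theorem~\ref{Theorem: existence and uniqueness of Algebraic - free decomposition} ensures the output is independent of the choices made during the reduction and is the algebraic closure of $M\leq N$. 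From this, both tests of interest are immediate degenerate cases: $M\leq_{*}N$ iff the closure equals $M$, and $M\leq_{\mathrm{alg}}N$ iff the closure equals $N$, giving Algorithms~\ref{alg: Get-Algebraic-Free}, \ref{alg: Is-Algebraic-Extension}, and~\ref{alg: Is-Free-Extension}.

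The main obstacle I expect is to make the row/column duality \emph{effective} rather than merely structural. The non-commutativity of $K[F]$ rules out Smith-normal-form-type reductions, so each elementary step must be justified through the weak algorithm together with careful bookkeeping of invertible row and column operations over $K[F]$; establishing termination of the reduction, and verifying that each intermediate matrix genuinely corresponds to a free factor of $N$ containing $M$, is where I anticipate the technical weight of the argument to lie. A secondary difficulty is that although Rosenmann's algorithm supplies free bases, one still needs a constructive membership test to translate generators of $M$ into coordinates over the basis of $N$, but this can be wrapped into the same weak-algorithm framework.
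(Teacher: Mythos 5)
Your proposal starts on the right footing: use Rosenmann's algorithm to translate the extension $M\leq N$ into an explicit coefficient matrix, and attack the problem via a duality between the row space and the column space of that matrix. This is indeed the key structural idea in the paper. But from that point onward your argument diverges, and the divergence is exactly where the gap lies.

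Where you propose an \emph{iterative} procedure --- hunting for a column (after an invertible change of coordinates) that lives in a proper free factor of $K[F]^n$ ``avoided by the others,'' splitting off a summand, and recursing --- the paper avoids any such reduction entirely. Its algorithm is a direct two-step computation. Take $Q$ to be the matrix whose columns are the given generators of $M$, so $R_Q = M \le N \le K[F]^k$. The $Q$-dual $N^{*Q}$ is simply the row space of the matrix $B$ such that $Q = FB$, where the columns of $F$ form the Rosenmann basis of $N$ (this $B$ is precisely your coefficient matrix). The algebraic closure of $M$ in $N$ is then the \emph{double} $Q$-dual $(N^{*Q})^{*Q}$, obtained by one more application of the same operation. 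The substance of the proof is Propositions \ref{Double dual is free factor} and \ref{Prop: Q-dual is algebraic}: the double dual is always a free factor of $N$ (shown via an explicit projection, following Umirbaev), and the single dual is always an algebraic extension of $L_Q$. No elementary row/column operations, no search for split-off summands, no termination argument, no careful ``bookkeeping of invertible operations'' --- precisely the things you flag as the technical weight of your sketch.

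That acknowledged difficulty is, in fact, the hole. You do not justify why a suitable ``strippable'' column must exist when a proper free factor does, nor that the procedure terminates, nor that the accumulated invertible operations can be bookkept correctly over the non-commutative ring $K[F]$. Your invocation of the ``weak algorithm'' also conflates Cohn's general structural notion with Rosenmann's concrete Gr\"{o}bner-basis machinery; the paper uses the latter, and what it needs from it is quite specific: a canonical basis for a right ideal, and the ability to extract the coefficients expressing an element of the ideal in that basis (Algorithm \ref{Algorithm: coefficient extraction}). Once you have those two subroutines, the whole algorithm is: form $Q$ from generators of $M$; compute $N^{*Q}$ by a coefficient extraction; compute $(N^{*Q})^{*Q}$ by another coefficient extraction; compare Rosenmann bases to settle the freeness and algebraicity tests. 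The missing idea, in short, is Theorem \ref{Theorem: algebraic-free decomposition-1}: the algebraic closure \emph{is} the double dual, so no iterative reduction is needed.
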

If $\text{rk}F=1$, Theorem \ref{thm: There exists algorithms for alg-free decomp}
holds due to well-known normal forms: see Remark \ref{rem: In rkF=00003D1 coefficients can be extracted in a different way}.

\subsection{Extensions of Free Groups\label{subsec:introducing algo for free group}}

Given an extension of finitely generated free groups, there exist
analogous notions as in Definition \ref{def: primitivity, free factor, algebraic, algebraic-free decomposition}
for the algebraicity and freeness of the extension, as well as for
its algebraic closure.  Moreover, for free groups, there are known
algorithms -- most famously Whitehead's -- for detecting these properties
and performing related computations (See Section \ref{sec: algs for free groups}
for further details and precise definitions).

Let $F$ be a free group. Using some fixed field $K$, one may associate
to every subgroup $H\leq F$ a (right) ideal $J_{H}$ in the free
group algebra $K\left[F\right]$, called its \emph{augmentation ideal}
(following Cohen's \cite[Sec.~4]{Cohen2006}, and see Section \ref{sec: algs for free groups}
for details).

Our second result is that, given an extension $H\leq H'$ of finitely
generated subgroups of $F$, by applying the algorithms of Theorem
\ref{thm: There exists algorithms for alg-free decomp} to the extension
$J_{H}\leq J_{H'}$ in $K\left[F\right]$, one obtains new algorithms
for computing the algebraic closure of the extension $H\leq H'$ (Algorithm
\ref{alg:Get-Algebraic-Free-For-Groups}), and consequently for testing
if the extension is algebraic (Algorithm \ref{alg:Is-Algebraic-Extension-For-Group})
and if $H$ is a free factor of $H'$ (Algorithm \ref{alg:Is-Free-Extension-For-Groups}).

\subsection{A Duality Induced by a Matrix over a Free Ideal Ring\label{subsec:A-Duality-intro}}

Let $\mathcal{A}$ be again a free ideal ring and $Q\in\text{Mat}_{k\times m}\left(\mathcal{A}\right)$
a $k\times m$ matrix with entries in $\mathcal{A}$. The \emph{row
space} of $Q$, denoted $L_{Q}$ ($L$ for 'left'), is the submodule
of the \uline{left} $\mathcal{A}$-module $\mathcal{A}^{m}$ generated
by the rows of $Q$, so $L_{Q}=\left\{ vQ\,\mid\,v\in\text{Mat}_{1\times k}\left(\mathcal{A}\right)\right\} $.
Similarly, the \emph{column space} of $Q$, denoted $R_{Q}$ ($R$
for 'right'), is the submodule of the \uline{right} $\mathcal{A}$-module
$\mathcal{A}^{k}$ generated by the columns of $Q$, so $R_{Q}=\left\{ Qv\,\mid\,v\in\text{Mat}_{m\times1}\left(\mathcal{A}\right)\right\} $.

A key ingredient in the algorithms of Theorem \ref{thm: There exists algorithms for alg-free decomp}
is the introduction of a duality, associating to each extension $M$
of the column (respectively, row) space of $Q$ an extension $M^{*Q}$
of its row (respectively, column) space, which we call the \emph{$Q$-dual}
of $M$ (see Section \ref{sec:The-Duality-Induced} for precise definitions).
Our next result establishes the usefulness of this duality in obtaining
the algebraic closure:
\begin{thm}
\label{Theorem: algebraic-free decomposition-1}(\textbf{Algebraic
closure via duality)} Let $Q\in\text{Mat}{}_{k\times m}\left(\mathcal{A}\right)$.
If $R_{Q}\leq M\leq\mathcal{A}^{k}$ then the double $Q$-dual of
$M$ is the algebraic closure of $R_{Q}$ in $M$. Similarly, if $L_{Q}\leq N\leq\mathcal{A}^{m}$
then the double $Q$-dual of $N$ is the algebraic closure of $L_{Q}$
in $N$. Namely, 
\[
R_{Q}\leq_{\text{alg}}\left(M^{*Q}\right)^{*Q}\leq_{*}M\,\,\,\,\,\text{and}\,\,\,\,\,L_{Q}\leq_{\text{alg}}\left(N^{*Q}\right)^{*Q}\leq_{*}N.
\]
\end{thm}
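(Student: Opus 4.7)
The plan is to establish both halves of the statement in parallel; since applying the argument to the transposed matrix $Q^{T}$ swaps the roles of $R_{Q}$ and $L_{Q}$, it suffices to prove the column-space version. Fix $R_{Q}\leq M\leq\mathcal{A}^{k}$ and set $N:=(M^{*Q})^{*Q}$. I will verify (i) $R_{Q}\leq N\leq_{*}M$ and (ii) the extension $R_{Q}\leq N$ admits no intermediate proper free factor. By the uniqueness statement in Theorem \ref{Theorem: existence and uniqueness of Algebraic - free decomposition}, these two properties identify $N$ with the algebraic closure of $R_{Q}$ in $M$, which is the conclusion.

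For (i), I would exploit three structural properties of the duality developed in Section \ref{sec:The-Duality-Induced} that are natural to expect from its definition: order reversal, the inclusion $L_{Q}\leq X^{*Q}$ for every admissible $X$ (which after a second dualization yields $R_{Q}\leq N$), and a Galois-style triple-dual identity $X^{*Q*Q*Q}=X^{*Q}$, giving the ``closedness'' $N^{*Q*Q}=N$. To upgrade closedness to the status of free factor of $M$, I expect the pairing-by-$Q$ description of $*Q$ to realize $M/N$ as a submodule of a free $\mathcal{A}$-module. The FIR hypothesis then forces $M/N$ to itself be free, so the short exact sequence $0\to N\to M\to M/N\to 0$ splits and $N\leq_{*}M$.

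For (ii) I would argue by contradiction. Suppose $R_{Q}\leq L\leq N$ is a strict intermediate free factor with $L\neq N$ and $L\leq_{*}N$. Applying $*Q$ and using order reversal together with the triple-dual identity gives
\[
M^{*Q}=N^{*Q}\leq L^{*Q}\leq R_{Q}^{*Q}.
\]
The key intermediate assertion is that every free factor is closed under $*Q*Q$: if $L\leq_{*}N$ then $L=L^{*Q*Q}$. Granted this, it suffices to show $L^{*Q}=M^{*Q}$, for then $L=L^{*Q*Q}=M^{*Q*Q}=N$, a contradiction. The equality $L^{*Q}=M^{*Q}$ itself should follow from compatibility of $*Q$ with free splittings: writing $N=L\oplus L'$, elements of $L^{*Q}$ are by the pairing description automatically insensitive to $L'$, hence extend consistently to elements of $M^{*Q}$ because $R_{Q}$ lies already inside $L$.

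The main obstacle I anticipate is exactly this compatibility lemma, namely the assertion that $L\leq_{*}N$ implies $L=L^{*Q*Q}$, together with the accompanying identification $L^{*Q}=M^{*Q}$ for $L$ a free factor. Proving it rigorously seems to require a simultaneous choice of bases of $L$, $L'$, and of the ambient $\mathcal{A}^{k}$ adapted to the matrix $Q$, and a careful check that the triple-dual identity, which is the formal engine of the duality, restricts well to each free summand. This will be the heart of the argument; the rest assembles formally from the three general properties of the $*Q$ operation listed above.
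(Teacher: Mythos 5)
Your high-level architecture matches the paper's: set $N:=(M^{*Q})^{*Q}$ and verify separately that $N\leq_{*}M$ and $R_{Q}\leq_{\text{alg}}N$; these are exactly the paper's Proposition \ref{Double dual is free factor} and Proposition \ref{Prop: Q-dual is algebraic}, from which Theorem \ref{Theorem: algebraic-free decomposition-1} follows immediately. However, the way you propose to carry out step (ii) contains a false lemma.

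You assert, as the ``key intermediate assertion,'' that \emph{every} free factor $L$ with $R_{Q}\le L\le_{*}N$ satisfies $L=(L^{*Q})^{*Q}$. This is not a general fact about free factors. By Theorem \ref{Thm: Equivalent conditions of algebraic extensions via duality in Introduction}, the identity $L=(L^{*Q})^{*Q}$ holds if and only if $R_{Q}\le_{\text{alg}}L$. A free factor of $N$ containing $R_{Q}$ need not be an algebraic extension of $R_{Q}$; indeed, in the very situation you set up the chain $L\le_{*}N\le_{*}M$ forces (by Proposition \ref{prop:Q-dual is invariant under free extensions}) $L^{*Q}=M^{*Q}$, hence $(L^{*Q})^{*Q}=(M^{*Q})^{*Q}=N$. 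So your claimed identity $(L^{*Q})^{*Q}=L$ is \emph{equivalent} to $L=N$, i.e., to the very statement you are trying to prove, and the argument is circular. The correct ingredient to use in its place is Proposition \ref{Double dual is free factor} itself: since $(L^{*Q})^{*Q}\le_{*}L$ always holds, the computation above gives $N=(L^{*Q})^{*Q}\le_{*}L\le_{*}N$, whence $L=N$. Alternatively — and this is what the paper does — one avoids the contradiction argument entirely and applies the criterion of Theorem \ref{Thm: Equivalent conditions of algebraic extensions via duality in Introduction} directly to $M^{*Q}$: the triple-dual identity $((M^{*Q})^{*Q})^{*Q}=M^{*Q}$ (a consequence of the double dual being a free factor plus invariance under free extensions) says precisely that $L_{Q}\le_{\text{alg}}M^{*Q}$, and reapplying the same criterion to $M^{*Q}$ yields $R_{Q}\le_{\text{alg}}(M^{*Q})^{*Q}$.

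For step (i), you correctly identify that the hard content is showing $(M^{*Q})^{*Q}\le_{*}M$, but the plan ``realize $M/N$ as a submodule of a free module via the pairing description of $*Q$'' is left entirely unsubstantiated, and the duality in the paper is defined through bases and coefficient matrices rather than a pairing, so the route you sketch does not obviously materialize. The paper instead constructs (following Umirbaev) an explicit $\mathcal{A}$-module projection $\tau\colon M\to(M^{*Q})^{*Q}$ out of the matrices $C,C'$ with $CC'=I$ that arise from expressing the two bases of $M^{*Q}$ in terms of one another, and then invokes Proposition \ref{prop:equivalent characterizations of free factor}. You flagged this step yourself as ``the heart of the argument,'' and indeed without that construction the proposal does not close. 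Finally, a minor point: reducing the row-space half of the statement to the column-space half requires passing to the opposite ring $\mathcal{A}^{\text{op}}$, not merely transposing $Q$, since $L_{Q}$ is a left module and $R_{Q}$ a right module.
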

In particular, a double application of the $Q$-dual can be used to
determine whether a given extension of free $\mathcal{A}$-modules
is free and whether it is algebraic.

The $Q$-dual defined here is a variation on the duality of Cohn \cite[Chap.~5.2]{Cohn2006},
albeit with a different goal in mind. The duality of Cohn is defined
on quotient modules arising from an algebraic extension of free $\mathcal{A}$-modules.
In contrast, the $Q$-dual defined here is a mapping between free
submodules extending fixed base modules (the column/row spaces of
$Q$), and is defined more generally for extensions which are not
necessarily algebraic. It is exactly this possible lack of algebraicity
and the way it is reflected in the properties of the $Q$-dual which
come into focus here, an idea exemplified by our next result:
\begin{thm}
\label{Thm: Equivalent conditions of algebraic extensions via duality in Introduction}Let
$Q\in\text{Mat}{}_{k\times m}\left(\mathcal{A}\right)$ and let $M_{0}\leq M$
be an extension of free $\mathcal{A}$-modules, where either $M_{0}$
is the row space of $Q$ and $M\leq\mathcal{A}^{m}$ or $M_{0}$ is
the column space of $Q$ and $M\leq\mathcal{A}^{k}.$ Then the following
are equivalent:
\begin{enumerate}
\item $M_{0}\leq_{\text{alg}}M.$
\item $\left(M^{*Q}\right)^{*Q}=M$.
\item $rk$$\left(M^{*Q}\right)=rk\left(M\right)$.
\end{enumerate}
\end{thm}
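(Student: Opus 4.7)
The plan is to deduce all three equivalences from Theorem \ref{Theorem: algebraic-free decomposition-1}, combined with the elementary fact that, over a FIR, a free factor of a finite-rank free module whose rank matches that of the ambient module must equal the whole module (its complement is free of rank zero, hence trivial).

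The equivalence (1)$\Leftrightarrow$(2) is essentially immediate. Theorem \ref{Theorem: algebraic-free decomposition-1} identifies $(M^{*Q})^{*Q}$ as the algebraic closure of $M_{0}$ in $M$, and by Definition \ref{def: primitivity, free factor, algebraic, algebraic-free decomposition} the extension $M_{0}\leq M$ is algebraic precisely when this algebraic closure coincides with $M$. So I would dispose of this direction in one line.

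For (2)$\Leftrightarrow$(3) I would first exploit the free-factor inclusion $(M^{*Q})^{*Q}\leq_{*}M$ supplied by Theorem \ref{Theorem: algebraic-free decomposition-1} to rephrase (2) as the rank identity $\textrm{rk}\bigl((M^{*Q})^{*Q}\bigr)=\textrm{rk}(M)$, using the principle above. The content of (2)$\Leftrightarrow$(3) then reduces to showing that iterating the $Q$-dual preserves rank, i.e., $\textrm{rk}(M^{*Q})=\textrm{rk}\bigl((M^{*Q})^{*Q}\bigr)$. I would try to prove this by applying Theorem \ref{Theorem: algebraic-free decomposition-1} a second time, now to the extension whose larger module is $M^{*Q}$ (with the roles of row and column spaces swapped). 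This yields a second free-factor inclusion $((M^{*Q})^{*Q})^{*Q}\leq_{*}M^{*Q}$ and identifies the left-hand side with the algebraic closure of the base in $M^{*Q}$. If one can then show that this triple dual collapses, $((M^{*Q})^{*Q})^{*Q}=M^{*Q}$, the ranks of the two free factors $(M^{*Q})^{*Q}\leq_{*}M$ and $((M^{*Q})^{*Q})^{*Q}\leq_{*}M^{*Q}$ can be compared through one more pass of the free-factor rank principle, yielding the required rank equality.

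The hard part, I expect, will be the triple-dual collapse $((M^{*Q})^{*Q})^{*Q}=M^{*Q}$, which is equivalent to the statement that every $Q$-dual module $M^{*Q}$ is already an algebraic extension of its base, irrespective of whether the original extension $M_{0}\leq M$ was algebraic. I anticipate this to follow directly from the explicit construction of the $Q$-dual in Section \ref{sec:The-Duality-Induced} — either as an intrinsic feature of that construction, or via a small rank/Euler-characteristic identity associated with the matrix $Q$ that ties $\textrm{rk}(M^{*Q})$ to $\textrm{rk}(M)-\textrm{rk}(M_{0})$ plus invariants of $Q$. Should such an identity be available, it would in fact furnish both rank equalities at once and obviate the need for the triple-dual argument entirely.
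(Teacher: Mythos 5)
Your plan is circular as stated: in the paper's development, Theorem \ref{Theorem: algebraic-free decomposition-1} is proved \emph{after} Theorem \ref{Thm: Equivalent conditions of algebraic extensions via duality in Introduction} and depends on it. Specifically, the paper establishes $R_{Q}\leq_{\text{alg}}(M^{*Q})^{*Q}$ by first proving Proposition \ref{Prop: Q-dual is algebraic} ($L_{Q}\leq_{\text{alg}}M^{*Q}$), whose proof explicitly invokes the direction (2)$\Rightarrow$(1) of the very theorem you are trying to prove. So you cannot treat \ref{Theorem: algebraic-free decomposition-1} as given here; you would first have to furnish it a proof independent of Theorem \ref{Thm: Equivalent conditions of algebraic extensions via duality in Introduction}.

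The second gap is what you flag as the ``hard part'': the triple-dual collapse $((M^{*Q})^{*Q})^{*Q}=M^{*Q}$. You hope it drops out of the construction or from a rank/Euler-characteristic identity, but you do not actually prove it. In fact it follows quickly from two already-proved facts --- $(M^{*Q})^{*Q}\leq_{*}M$ (Proposition \ref{Double dual is free factor}) and the invariance of the $Q$-dual under free extensions (Proposition \ref{prop:Q-dual is invariant under free extensions}) --- but the speculative route you gesture at (an Euler-characteristic identity relating $\text{rk}(M^{*Q})$ to $\text{rk}(M)$, $\text{rk}(M_{0})$ and invariants of $Q$) is not supported by anything in the paper and would likely be false without the algebraicity hypothesis.

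The paper's own proof sidesteps both issues and uses far lighter machinery. (1)$\Rightarrow$(2) uses only Proposition \ref{Double dual is free factor} plus the definition of algebraicity; (2)$\Rightarrow$(3) uses only the fact that the $Q$-dual cannot increase rank (Proposition \ref{prop: duality is well-defined before its definition}), chained as $\text{rk}M=\text{rk}((M^{*Q})^{*Q})\leq\text{rk}(M^{*Q})\leq\text{rk}M$; and (3)$\Rightarrow$(1) proceeds directly by taking an arbitrary intermediate free factor $R_{Q}\leq M'\leq_{*}M$, using Proposition \ref{prop:Q-dual is invariant under free extensions} to see $(M')^{*Q}=M^{*Q}$, and squeezing $\text{rk}M'\leq\text{rk}M=\text{rk}M^{*Q}=\text{rk}(M')^{*Q}\leq\text{rk}M'$ to force $M'=M$. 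No algebraic-closure result and no triple dual are needed. Your rank-comparison instincts for (2)$\Leftrightarrow$(3) are essentially the same as the paper's, but you should prove (3)$\Rightarrow$(1) directly via the squeeze on an intermediate free factor instead of routing through Theorem \ref{Theorem: algebraic-free decomposition-1}.
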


\subsection{Duality and Word Measures}

The $Q$-duality poses particular interest to the theory of word measures
on the family $\left\{ GL_{n}\left(K\right)\right\} _{n\in\mathbb{N}}$
where $K$ is a finite field of order $q$. Fixing a word $w\in F$
where $F$ is the free group on $r$ generators, a $w$-random element
of the group $GL_{N}\left(K\right)$ is obtained by sampling independently
$r$ uniformly random elements $g_{1},g_{2},...,g_{r}\in GL_{N}\left(K\right)$
and evaluating $w\left(g_{1},g_{2},...,g_{r}\right).$ The expected
number of fixed vectors in $K^{N}$ of a $w-$random matrix in $GL_{N}\left(K\right)$,
denoted $\mathbb{E}_{w}^{\text{GL}_{N}\left(K\right)}\left[\text{fix}\right]$,
is studied in \cite{ErnstWest2024} and is shown to be a rational
function in $q^{N}$. Moreover, the order of magnitude of the quantity
$\mathbb{E}_{w}^{\text{GL}_{N}\left(K\right)}\left[\text{fix}\right]-2$,
which reflects the deviation from the uniform measure, is related
to the set of algebraic extensions of the ideal $\left(w-1\right)K\left[F\right]$
in the free group algebra $K\left[F\right]$. Specializing the $Q$-duality
to this case and applying the ability to perform a right-left inversion
$\iota$ inside the free group algebra (see Definition \ref{def: iota}),
our next result is the existence of a natural rank-preserving involution
on the algebraic extensions of the ideal $\left(w-1\right)K\left[F\right]$:
\begin{thm}
\label{thm: there exists an involution on extensions of w-1}Let $w\in F$
and $Q$ the $1\times1$ matrix with $w-1$ as its single entry. The
map $I\mapsto\iota\left(I^{*Q}\right)$ is an involution on algebraic
extensions of the ideal $\left(w-1\right)K\left[F\right]$, preserving
rank and inverting the order of inclusion.
\end{thm}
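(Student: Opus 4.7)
The plan is to present $\phi : I \mapsto \iota(I^{*Q})$ as a composition of two natural bijections --- the $Q$-dual of Section~\ref{sec:The-Duality-Induced} and the antiautomorphism $\iota$ --- each with clear behaviour on rank and inclusion, and then to verify the involution identity via a compatibility of $\iota$ with $*Q$ special to this $1\times 1$ setting.

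First I would check that $\iota$ swaps $R_Q=(w-1)K[F]$ and $L_Q=K[F](w-1)$: since $\iota$ is an antiautomorphism, $\iota(K[F](w-1)) = \iota(w-1)K[F] = (w^{-1}-1)K[F]$, and the identities $w^{-1}-1 = -w^{-1}(w-1)$ and $w-1 = -w(w^{-1}-1)$ give $(w^{-1}-1)K[F] = (w-1)K[F]$. Because the notions of rank, free factor, and algebraic extension are module-theoretic, $\iota$ induces an order-preserving, rank-preserving bijection between algebraic extensions of $L_Q$ (as left ideals) and algebraic extensions of $R_Q$ (as right ideals). Theorem~\ref{Thm: Equivalent conditions of algebraic extensions via duality in Introduction} supplies the analogous statement for $*Q$: condition (2) yields $(I^{*Q})^{*Q}=I$ for algebraic $I$, and then applying the theorem once more at $I^{*Q}$ shows $I^{*Q}$ is itself algebraic over $L_Q$, so $*Q$ restricts to a bijection between the two families of algebraic extensions; condition (3) gives rank preservation, while the definition in Section~\ref{sec:The-Duality-Induced} (an annihilator-type construction against $Q$) makes $*Q$ order-reversing. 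Composing, $\phi$ is a rank-preserving, order-reversing self-bijection of the algebraic extensions of $R_Q$.

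The remaining property is the involution identity $\phi\circ\phi = \mathrm{id}$, which I expect to be the main obstacle. It is equivalent to the commutation $\iota(K^{*Q}) = (\iota(K))^{*Q}$ for every algebraic extension $K$ of $L_Q$, because granted this,
\[
\phi(\phi(I)) \;=\; \iota\bigl((\iota(I^{*Q}))^{*Q}\bigr) \;=\; \iota\bigl(\iota((I^{*Q})^{*Q})\bigr) \;=\; (I^{*Q})^{*Q} \;=\; I.
\]
To prove the commutation, I would unpack the definition of $*Q$ from Section~\ref{sec:The-Duality-Induced} and apply $\iota$ entry-wise to the defining relations. The crucial observation is that $Q$ is the $1\times 1$ matrix $[w-1]$, so that $\iota(Q)^{T} = [w^{-1}-1] = [-w^{-1}(w-1)]$ differs from $Q$ only by a unit; this should force the $\iota(Q)$-dual defined on the opposite side to coincide with the $Q$-dual after applying $\iota$, yielding the desired commutation. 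Carefully tracking the swap of sides (left versus right) and absorbing the unit $-w^{-1}$ are what make this step the technical heart of the argument.
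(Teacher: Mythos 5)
Your proposal has the right skeleton and follows, in outline, the same route as the paper: you compose $\iota$ with the $Q$-dual, observe that each is rank-preserving and order-inverting, and reduce the involution identity to a commutation of $\iota$ with the $Q$-dual. That commutation is exactly the paper's Lemma~\ref{lem: duality with Q of inverse instead of iota of Q} at $\lambda=1$: using $\iota(M^{*Q})=(\iota(M))^{*\iota(Q)}$, your desired identity $\iota(K^{*Q})=(\iota(K))^{*Q}$ is equivalent to $M^{*\iota(Q)}=M^{*Q}$ for $M=\iota(K)\ge R_Q$.

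The gap is that your heuristic for this last step is not correct, and you stop short of an actual proof. You argue that since $\iota(Q)=(w^{-1}-1)=(w-1)\cdot(-w^{-1})=Q\cdot U$ differs from $Q$ ``only by a unit,'' the two duals must coincide. But the unit $U=-w^{-1}$ sits on the \emph{right}, and the dual is not invariant under such a change. Concretely, writing $Q=FB$ so that $M^{*Q}=L_B$, one gets $\iota(Q)=FBU$ and hence $M^{*\iota(Q)}=L_{BU}$; Claim~\ref{invertible matrix preserves column space} gives $L_{CA}=L_A$ for an invertible $C$ on the \emph{left}, but says nothing about $L_{BU}$ versus $L_B$. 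The left ideal generated by the entries of $gU$ is, in general, different from the one generated by the entries of $g$, because $U$ does not commute. So the step you call ``the technical heart'' really is one: to show $L_{BU}=L_B$ here, one must exploit the specific relation $w-1=fg$. The paper multiplies this equation by $-gw^{-1}$ on the left to get $gw^{-1}=(I-gw^{-1}f)g$, giving one containment, and by $g$ on the left and $w^{-1}$ on the right to get $g=(gf-I)gw^{-1}$, giving the other. Without that (or an equivalent) computation, the involution identity is not established; everything before it in your write-up is fine.
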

For example, for $w=xyx^{-1}y^{-1}\in\left\langle x,y\right\rangle =F$,
the involution of Theorem \ref{thm: there exists an involution on extensions of w-1}
couples the ideal $\left(w-1\right)K\left[F\right]$ with the improper
ideal $K\left[F\right]$ and every ideal of the form $\left(x-\alpha\right)K\left[F\right]+\left(y-\beta\right)K\left[F\right]$
where $\alpha,\beta\in K^{*}$ with the ideal $\left(x-\alpha^{-1}\right)K\left[F\right]+\left(y-\beta^{-1}\right)K\left[F\right]$.

The theory developed in \cite{ErnstWest2024} connecting word measures
on $\text{GL}_{N}(K)$ and free group algebras is inspired by a similar
theory connecting word measures on the symmetric groups $S_{N}$ and
the poset of subgroups of the free group \cite{Puder2014,Puder2015,Hanany2023}.
The main result of \cite{Puder2015} gives the precise order of magnitude
of $\mathbb{E}_{w}^{S_{N}}[\text{fix}]-1$, where $\mathbb{E}_{w}^{S_{N}}[\text{fix}]$
is the expected number of fixed points in a $w$-random permutation
in $S_{N}$. A key ingredient in the proof of that result was the
definition of a function on extensions of subgroups of $F$ which
takes the value $\mathbb{E}_{w}^{S_{N}}[\text{fix}]$ on the extension
$\langle w\rangle\le F$. Conjecture 1.6 from \cite{ErnstWest2024}
deals, similarly, with the precise order of magnitude of $\mathbb{E}_{w}^{\text{GL}_{N}\left(K\right)}\left[\text{fix}\right]-2$
in $\text{GL}_{N}(K)$. In light of the proof in the case of $S_{N}$,
the following definition may be of value in attempts to prove this
conjecture:
\begin{defn}
\label{def: Big phi}Let $K$ be a finite field, and $L\leq M$ an
extension of free $K\left[F\right]$-modules such that $\text{rk}M<\infty$.
Define $\phi_{L,M}(N):=q^{N}\cdot\mathbb{P}\left(L\subseteq\ker\varphi_{M}\right)$,
where the probability is taken over a $K\left[F\right]$-module structure
on $K^{N}$ and a $K\left[F\right]$-module homomorphism $\varphi_{M}:M\rightarrow K^{N}$,
chosen uniformly at random.

Note that the number of possible choices in Definition \ref{def: Big phi}
is indeed finite since $K$ is finite and $\text{rk}M<\infty$ (see
Section \ref{sec: Theorem about Phi and Duality} for more details),
and that $\phi_{\left(w-1\right)K\left[F\right],K\left[F\right]}(N)=\mathbb{E}_{w}^{\text{GL}_{N}\left(K\right)}\left[\text{fix}\right]$.
The following result illustrates both the usefulness of the duality
and its well-behaved nature with respect to other natural notions.
It shows that the involution arising from the $Q$-duality in Theorem
\ref{thm: there exists an involution on extensions of w-1} leaves
the probability above invariant:
\end{defn}
\begin{thm}
\label{Thm: Phi invariant under duality}Let $Q$ be a matrix over
$K\left[F\right]$ and $L$ and $M$ algebraic extensions of its column
space such that $L\leq M$. Then $\phi_{L,M}=\phi_{M^{*Q},L^{*Q}}.$
\end{thm}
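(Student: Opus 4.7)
The plan is to reduce the desired equality to a duality identity between Hom-space counts, which then follows from a matching of presentation matrices produced by the $Q$-dual, combined with rank-nullity.

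First, I would unfold the definition of $\phi$. Since $M$ is a free $K[F]$-module of finite rank, for any right $K[F]$-module structure $s$ on $K^{N}$ the set of homomorphisms $\varphi: M \to K^{N}$ has size $q^{N\,\text{rk}(M)}$, and those vanishing on $L$ are precisely the ones factoring through $M/L$. Hence
\[
\phi_{L,M}(N) \;=\; q^{N(1-\text{rk}(M))} \cdot \frac{1}{|S_{r}|} \sum_{s \in S_{r}} \bigl|\text{Hom}_{s}(M/L,\, K^{N})\bigr|,
\]
where $S_{r}$ denotes the set of right $K[F]$-module structures on $K^{N}$. By Theorem \ref{Thm: Equivalent conditions of algebraic extensions via duality in Introduction}, together with the algebraic-extension hypothesis on $L$ and $M$, we have $\text{rk}(L^{*Q})=\text{rk}(L)$ and $\text{rk}(M^{*Q})=\text{rk}(M)$, so
\[
\phi_{M^{*Q},L^{*Q}}(N) \;=\; q^{N(1-\text{rk}(L))} \cdot \frac{1}{|S_{\ell}|} \sum_{t \in S_{\ell}} \bigl|\text{Hom}_{t}(L^{*Q}/M^{*Q},\, K^{N})\bigr|,
\]
with $S_{\ell}$ the set of left $K[F]$-module structures on $K^{N}$. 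The involution $\iota$ of Definition \ref{def: iota} provides a natural size-preserving bijection $S_{r} \leftrightarrow S_{\ell}$, $s \mapsto s^{\iota}$, so it remains to compare the two sums under this correspondence.

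Next, and this is the crux, I would use the construction of the $Q$-dual from Section \ref{sec:The-Duality-Induced} to produce matching presentations of $M/L$ and of $L^{*Q}/M^{*Q}$. Specifically, expressing a basis of $L$ in terms of a basis of $M$ yields a matrix $A \in \text{Mat}_{\text{rk}(M) \times \text{rk}(L)}(K[F])$ presenting $M/L$ as a right module, and the $Q$-dual construction should produce the corresponding matrix $A' = \iota(A)^{T} \in \text{Mat}_{\text{rk}(L) \times \text{rk}(M)}(K[F])$ presenting $L^{*Q}/M^{*Q}$ as a left module. Granting this, for each $s \in S_{r}$,
\[
\bigl|\text{Hom}_{s}(M/L, K^{N})\bigr| = |\ker A_{s}| \quad\text{and}\quad \bigl|\text{Hom}_{s^{\iota}}(L^{*Q}/M^{*Q}, K^{N})\bigr| = |\ker A'_{s^{\iota}}|,
\]
where $A_{s}$ is the $K$-linear map $(K^{N})^{\text{rk}(M)} \to (K^{N})^{\text{rk}(L)}$ obtained by letting $A$ act via $s$, and $A'_{s^{\iota}}$ is its transpose. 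Standard rank-nullity yields
\[
q^{-N\,\text{rk}(M)} |\ker A_{s}| \;=\; q^{-N\,\text{rk}(L)} |\ker A'_{s^{\iota}}|,
\]
and summing over $s$ using the bijection $s \leftrightarrow s^{\iota}$ gives the theorem.

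The main obstacle is constructing the matching presentations, namely verifying that the $Q$-dual of the pair $L \leq M$ (viewed relative to their shared base $R_{Q}$) yields a pair $M^{*Q} \leq L^{*Q}$ presented by the $\iota$-transpose of a chosen presentation of $M/L$. Here the algebraic-extension hypothesis is essential: by Theorem \ref{Thm: Equivalent conditions of algebraic extensions via duality in Introduction}, it ensures that the double $Q$-dual is the identity on $L$ and on $M$ and that no ranks are lost on dualizing, which forces $A$ and $A'$ to have the correct matching shapes to be transposes. Once this presentation-matching is established, the remainder of the argument is formal linear algebra over $K$.
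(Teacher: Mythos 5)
Your high-level strategy is essentially the same as the paper's: reduce $\phi$ to a count of module homomorphisms, identify a presentation matrix that controls both sides, and invoke rank-nullity (equivalently, that row rank equals column rank) for the resulting block matrix over $K$. The paper does exactly this, writing $Q = TT' = SS'$ with $S = TC$, deducing $CS' = T'$, and observing that both probabilities equal $q^{-\operatorname{rk}\hat g(C)}$. Your reformulation via $\operatorname{Hom}_s(M/L, K^N)$ and presentation matrices is a clean abstraction of the same computation.

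However, the crux — "constructing the matching presentations" — which you explicitly flag as the remaining obstacle, is misidentified, and the version you propose would not withstand verification. The presentation matrix of $L^{*Q}/M^{*Q}$ as a left module is \emph{not} $\iota(A)^{T}$; it is the \emph{same} matrix $A = C \in \operatorname{Mat}_{\operatorname{rk}(M)\times\operatorname{rk}(L)}(K[F])$, now acting on the left-module side. This is the content of the identity $CS' = T'$: since $S = TC$ and $Q = SS' = TT'$, cancellation against the basis $T$ gives $CS' = T'$, so the rows of $T'$ (a basis of $M^{*Q}$) are expressed in the rows of $S'$ (a basis of $L^{*Q}$) by left-multiplication by the \emph{untransposed, un-$\iota$'d} matrix $C$. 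Correspondingly, the bijection of module structures should simply be the identity $g \mapsto g$ (reinterpreting the same $g$ as a left structure on column vectors), not your $s \mapsto s^{\iota}$. With the correct presentation $A$ and the matching $s^{\iota}$, the relevant block matrix would be $\hat{g'}(A)$ where $\hat{g'}(a) = \hat g(\iota(a))^{T}$, which has no rank relation to $\hat g(A)$ in general. Your two modifications (replacing $A$ by $\iota(A)^{T}$, and $g$ by $g'$) do algebraically cancel to reproduce the genuine transpose $\hat g(A)^{T}$, so the final rank-nullity step in your outline is formally valid — but this is a coincidence, since the premise (that $\iota(A)^T$ presents $L^{*Q}/M^{*Q}$) is false, and it is exactly the unverified premise on which you hang the whole argument. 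The fix is simple: drop $\iota$ entirely, take the presentation of $L^{*Q}/M^{*Q}$ to be $A$ itself, match right and left module structures by the same $g$, and compare the left kernel of $\hat g(A)$ (for $\operatorname{Hom}(M/L, K^N)$) with the right kernel of $\hat g(A)$ (for $\operatorname{Hom}(L^{*Q}/M^{*Q}, K^N)$).
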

(The analogous statement about algebraic extensions of the row space
is equivalent, by applying the theorem on duals.) As $M$ is a free
$K\left[F\right]$-module, a random homomorphism from $M$ to $K^{N}$
can be obtained by selecting uniformly at random the images of some
fixed basis of $M$. In light of this, Theorem \ref{Thm: Phi invariant under duality}
may seem somewhat surprising at first glance in the sense that the
ranks of $M$ and $L^{*Q}$ are generally not equal, so the two probabilities
which coincide are taken over a different number of uniformly chosen
vectors in $K^{N}$. We illustrate this point in the following example: 
\begin{example}
\label{Example: calculation of phi via duality}Let $K$ be a field
of order $q$, where $q$ is a prime power, and let $w=xyx^{-1}y^{-1}$
be the commutator in $F=\left\langle x,y\right\rangle $. We calculate
$\phi_{I,J}\left(N\right)$ for the right ideals $I=\left(w-1\right)K\left[F\right]$
and $J=\left(x-1\right)K\left[F\right]+\left(y-1\right)K\left[F\right]$.
We first choose independently and uniformly at random two matrices
$X,Y\in GL_{N}\left(K\right)$ to give $K^{N}$ a right $\mathcal{A}$-module
structure, defined by letting $x$ and $y$ act on row vectors of
$K^{N}$ by right multiplication with $X$ and $Y$ respectively.
Approaching the calculation directly, we write 
\[
w-1=\left(x-1\right)\left(yx^{-1}y^{-1}-x^{-1}y^{-1}\right)+\left(y-1\right)\left(x^{-1}y^{-1}-y^{-1}\right),
\]
so $\phi_{I,J}\left(N\right)$ is $q^{N}$ times the probability that
two uniformly chosen row vectors $v_{1},v_{2}\in K^{N}$ satisfy $0=v_{1}\left(YX^{-1}Y^{-1}-X^{-1}Y^{-1}\right)+v_{2}\left(X^{-1}Y^{-1}-Y^{-1}\right)$.
While this calculation can be completed, we proceed with a second
(and easier) approach, using the $Q$-duality with respect to the
$1\times1$ matrix $Q=w-1$. The $Q$-duals are $I^{*Q}=K\left[F\right]$
and $J^{*Q}=J$ (note that $J$ is a two-sided ideal), so by Theorem
\ref{Thm: Phi invariant under duality} we have $\phi_{I,J}\left(N\right)=\phi_{J,K[F]}\left(N\right)$.
Letting the uniformly chosen $X,Y\in GL_{N}\left(K\right)$ act on
column vectors of $K^{N}$ by left multiplication, the latter is $q^{N}$
times the probability that a single uniformly chosen column vector
$v\in K^{N}$ is fixed by both $X$ and $Y$. By splitting into the
cases that $v=0$ and $v\neq0$ we obtain
\[
\phi_{I,J}\left(N\right)=q^{N}\left[\frac{1}{q^{N}}\cdot1+\frac{q^{N}-1}{q^{N}}\cdot\left(\frac{1}{q^{N}-1}\right)^{2}\right]=1+\frac{1}{q^{N}-1}.
\]
\end{example}

\subsection{Paper Organization}

The paper aims to present the algorithms mentioned in Theorems \ref{thm: There exists algorithms for alg-free decomp}
and Section \ref{subsec:introducing algo for free group} in a straightforward
manner and without assuming prior knowledge of free ideal rings or
homological algebra. It is structured accordingly, with the required
ideas presented or recalled first, leading up to the explicit algorithms
for free group algebras in Section \ref{sec: Algs for group algebras}
and for free groups in Section \ref{sec: algs for free groups}.

Section \ref{sec:Algebraic-and-Free} is devoted to extensions of
free modules over a free ideal ring, mainly proving the existence
and uniqueness of the algebraic closure in Theorem \ref{Theorem: existence and uniqueness of Algebraic - free decomposition}.
Section \ref{sec:The-Duality-Induced} develops the duality induced
by a matrix with entries in a free ideal ring and its properties,
and proves Theorems \ref{Theorem: algebraic-free decomposition-1}
and \ref{Thm: Equivalent conditions of algebraic extensions via duality in Introduction}.
Section \ref{sec:The-Duality in the free group algebra} specializes
the duality of the previous section to free group algebras, proving
Theorem \ref{thm: there exists an involution on extensions of w-1}.
Section \ref{sec: Theorem about Phi and Duality} proves Theorem \ref{Thm: Phi invariant under duality}.
Section \ref{sec: Algs for group algebras} recalls the relevant aspects
of Rosenmann's algorithm for computing a basis for right-ideals in
free group algebras and provides the explicit algorithms of Theorem
\ref{thm: There exists algorithms for alg-free decomp}. Section \ref{sec: algs for free groups}
is devoted to the analogous algorithms in free groups. As an aside,
Section \ref{sec: intersection algorithm} describes an algorithm
(Algorithm \ref{alg:Intersection_of_modules}) for computing a generating
set for the intersection of free $K\left[F\right]$-modules. We end
with a few open questions in Section \ref{sec:Further-Algorithms?}.

\section{\label{sec:Algebraic-and-Free}Algebraic and Free Extensions in a
Free Ideal Ring}

In this section we prove several basic facts regarding free modules
over a free ideal ring, most importantly proving the existence and
uniqueness of the algebraic closure in Theorem \ref{Theorem: existence and uniqueness of Algebraic - free decomposition}.
Throughout this section $\mathcal{A}$ denotes a free ideal ring.
Recall from Definition \ref{def: primitivity, free factor, algebraic, algebraic-free decomposition}
the notions of a free factor and an algebraic extension. We start
with an elementary observation. 
\begin{prop}
\label{prop:equivalent characterizations of free factor}Let $N$
be a free $\mathcal{A}$-module and $M\leq N$. Then the following
are equivalent:
\begin{enumerate}
\item The submodule $M$ is a free factor of $N$.
\item The quotient module $N/M$ is free.
\item There exists a projection $N\rightarrow M,$ i.e., an $\mathcal{A}$-module
homomorphism $\tau:N\rightarrow M$ such that $\tau\mid_{M}=Id_{M}.$
\end{enumerate}
\end{prop}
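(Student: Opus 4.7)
The plan is to prove the chain of implications $(1)\Rightarrow(2)\Rightarrow(3)\Rightarrow(1)$, relying on two facts already in scope: that submodules of free modules over a free ideal ring are free (cited from Cartan--Eilenberg in the excerpt) and the projective-implies-splits mechanism for short exact sequences.

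For $(1)\Rightarrow(2)$, I would fix a basis $B_M$ of $M$ which extends to a basis $B_N$ of $N$, and argue that the images of $B_N\setminus B_M$ in $N/M$ form a basis: they clearly span, and any $\mathcal{A}$-linear dependence among them would pull back to an expression of a non-trivial combination of $B_N\setminus B_M$ as an $\mathcal{A}$-combination of $B_M$, contradicting that $B_N$ is a basis. Hence $N/M$ is free.

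For $(2)\Rightarrow(3)$, I would consider the short exact sequence
\[
0\longrightarrow M\longrightarrow N\stackrel{\pi}{\longrightarrow} N/M\longrightarrow 0,
\]
and observe that since $N/M$ is free it is projective; in particular, the identity map $\mathrm{Id}_{N/M}$ lifts through $\pi$ to a section $s:N/M\to N$. Then $\tau:=\mathrm{Id}_N-s\circ\pi$ maps $N$ into $\ker\pi=M$, and restricts to the identity on $M$ (since $\pi|_M=0$), giving the desired projection.

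For $(3)\Rightarrow(1)$, I would use the projection $\tau$ to write $N=M\oplus\ker\tau$ as $\mathcal{A}$-modules. Now invoke the FIR hypothesis: $\ker\tau$ is a submodule of the free module $N$, hence itself free with a basis $B_{\ker\tau}$. Any basis $B_M$ of $M$ together with $B_{\ker\tau}$ is then a basis of $N$ extending $B_M$, so $M\leq_* N$. The only mildly delicate point in this argument is the appeal to the FIR property in the last step; all other steps are formal manipulations with splittings of short exact sequences and are expected to go through without obstacle.
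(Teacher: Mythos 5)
Your proof is correct and follows essentially the same route as the paper: the chain $(1)\Rightarrow(2)\Rightarrow(3)\Rightarrow(1)$, using the splitting of the short exact sequence $0\to M\to N\to N/M\to 0$ in the middle step and the fact that submodules of free modules over a FIR are free for the last step. The only cosmetic difference is in $(1)\Rightarrow(2)$, where you verify directly that the images of $B_N\setminus B_M$ form a basis of $N/M$, while the paper instead notes $N/M\cong M'$ for a complementary summand $M'$; both are immediate.
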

\begin{proof}
\uline{(1=>2):} If $M\leq_{*}N$ then there exists some $M'\leq N$
such that $N=M\oplus M',$ so $N/M\cong M'$ is free since $M'$ is
a submodule of the free $\mathcal{A}$-module $N\text{. }$\\
\uline{(2=>3):} Consider the canonical short exact sequence $0\rightarrow M\rightarrow N\overset{\pi}{\rightarrow}N/M\rightarrow0$.
If $N/M$ is free then there exists an $\mathcal{A}$-module homomorphism
$s:N/M\rightarrow N$ such that $\pi\circ s=Id_{N/M}$, so $N=M\oplus s\left(N/M\right).$
The desired $\tau$ is the projection onto the first summand.\\
\uline{(3=>1):} If a projection $\tau:N\rightarrow M$ exists then
$N=M\oplus\ker\tau.$ Any basis of $M$ can now be extended to a basis
of $N$ by adding to it the elements of a basis of $\ker\tau$ (which
is itself free as a submodule of the free module $N\text{).}$ 
\end{proof}
We next show how the properties of freeness and algebraicity are preserved
under certain submodule operations:
\begin{prop}
\label{Intersection preserves freeness of extension}Let $L$, $M$
and $N$ be submodules of a free $\mathcal{A}$-module such that $M\leq_{*}N$.
Then $M\cap L\leq_{*}N\cap L.$
\end{prop}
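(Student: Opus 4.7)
The plan is to use the quotient characterization of free factors in Proposition~\ref{prop:equivalent characterizations of free factor} together with the defining property of a free ideal ring that submodules of free modules are free.

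First, I would reduce the problem to showing that the quotient $(N\cap L)/(M\cap L)$ is a free $\mathcal{A}$-module. Indeed, by Proposition~\ref{prop:equivalent characterizations of free factor}, $M\cap L\le_{*}N\cap L$ is equivalent to the freeness of this quotient, and $M\leq_* N$ gives that $N/M$ is free.

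Next, I would invoke the second isomorphism theorem for modules with $A=M$ and $B=N\cap L$, both viewed as submodules of $N$ (using $M\le N$). Since $M\cap(N\cap L)=M\cap L$, this gives a canonical isomorphism
\[
(N\cap L)/(M\cap L)\;\cong\;\bigl(M+(N\cap L)\bigr)/M.
\]
The right-hand side is a submodule of $N/M$, which is free by hypothesis. Since $\mathcal{A}$ is a free ideal ring, every submodule of a free $\mathcal{A}$-module is free, so $(N\cap L)/(M\cap L)$ is free, and the implication (2)$\Rightarrow$(1) of Proposition~\ref{prop:equivalent characterizations of free factor} concludes the argument.

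There is no real obstacle here; the only thing to watch is not to be tempted by the more direct approach of restricting a projection $\tau:N\to M$ to $N\cap L$, which does not land in $M\cap L$ in general. The quotient route via the second isomorphism theorem sidesteps this issue cleanly, and it is precisely the FIR property — that submodules of free modules are free — which converts ``freeness of $N/M$'' into ``freeness of the subquotient $(M+(N\cap L))/M$''.
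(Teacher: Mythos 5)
Your proof is correct and is essentially identical to the paper's: both pass to the quotient via Proposition~\ref{prop:equivalent characterizations of free factor}, identify $(N\cap L)/(M\cap L)$ with the submodule $\bigl(M+(N\cap L)\bigr)/M$ of $N/M$ via the second isomorphism theorem, and invoke that submodules of free modules over a FIR are free. Your closing remark about why restricting a projection $\tau:N\to M$ to $N\cap L$ does not work is a useful observation, though the paper does not discuss it.
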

\begin{proof}
By Proposition \ref{prop:equivalent characterizations of free factor},
the quotient module $N/M$ is a free $\mathcal{A}$-module, thus so
is its submodule $\nicefrac{\left(N\cap L+M\right)}{M}$. By the second
isomorphism theorem the latter module is isomorphic to $\nicefrac{N\cap L}{M\cap L}$,
which by Proposition \ref{prop:equivalent characterizations of free factor}
again implies that $M\cap L\leq_{*}N\cap L.$
\end{proof}
Applying Proposition \ref{Intersection preserves freeness of extension}
inductively and using the transitivity of being a free factor, we
obtain:
\begin{cor}
\label{Finite intersection of free factors is a free factor} A finite
intersection of free factors of a free $\mathcal{A}$-module $N$
is itself a free factor of $N$.
\end{cor}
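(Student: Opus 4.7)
The plan is to proceed by induction on the number $k$ of free factors, exactly as the remark before the statement suggests. The base case $k=1$ is trivial, and the key mechanism in the inductive step is to combine Proposition \ref{Intersection preserves freeness of extension} with transitivity of the free factor relation.

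First I would record (or invoke) the transitivity of being a free factor: if $A\leq_{*}B$ and $B\leq_{*}N$, then $A\leq_{*}N$. This is immediate from the definition, since a basis of $A$ extends to a basis of $B$, which in turn extends to a basis of $N$; concatenating the extensions yields a basis of $N$ containing a basis of $A$. (Alternatively, it follows from the equivalent characterizations in Proposition \ref{prop:equivalent characterizations of free factor}, using that $N=A\oplus A'\oplus B'$ for appropriate complements.)

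For the inductive step, let $M_{1},\dots,M_{k}\leq_{*}N$ and assume by induction that $M':=\bigcap_{i=1}^{k-1}M_{i}\leq_{*}N$. Apply Proposition \ref{Intersection preserves freeness of extension} with the free factor $M'\leq_{*}N$ and the submodule $L=M_{k}$: we obtain
\[
\bigcap_{i=1}^{k}M_{i}\;=\;M'\cap M_{k}\;\leq_{*}\;N\cap M_{k}\;=\;M_{k}.
\]
Since $M_{k}\leq_{*}N$ by hypothesis, transitivity yields $\bigcap_{i=1}^{k}M_{i}\leq_{*}N$, completing the induction.

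There is no real obstacle here; the corollary is a straightforward bookkeeping consequence of the preceding proposition, and the only thing one has to be careful about is using $N\cap M_{k}=M_{k}$ in the intermediate step so that transitivity can be applied with $M_{k}$ as the middle module. I would keep the write-up to a few lines, since the work has already been done in Proposition \ref{Intersection preserves freeness of extension}.
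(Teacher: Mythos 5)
Your proposal is correct and matches the paper's approach exactly: the paper's proof is precisely the one-line remark "Applying Proposition \ref{Intersection preserves freeness of extension} inductively and using the transitivity of being a free factor," which you have simply unpacked with care, including the observation that $N\cap M_{k}=M_{k}$ lets transitivity be applied with $M_{k}$ as the middle term. Nothing is missing and nothing is done differently.
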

The following observation is not used in the paper but we add it here
for completeness.
\begin{prop}
\label{Sum of algebraic is algebraic}Let $L$, $M$ and $N$ be submodules
of a free $\mathcal{A}$-module. If $M$ and $N$ are algebraic extensions
of $L$ then so is $M+N$.
\end{prop}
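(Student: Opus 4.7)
The plan is to argue by contradiction, showing that any intermediate free factor of $L$ in $M+N$ must contain both $M$ and $N$, and hence equal $M+N$. The key tool will be Proposition \ref{Intersection preserves freeness of extension}, which lets me pull back a free factor of $M+N$ to free factors of $M$ and of $N$ separately.

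More concretely, I would suppose that $L \leq K \leq_{*} M+N$ is some free factor of $M+N$ containing $L$, and aim to show $K = M+N$. First I would observe that intersecting with $M$ gives $K \cap M \leq_{*} (M+N) \cap M = M$ by Proposition \ref{Intersection preserves freeness of extension} (with $L$ in that proposition specialized to $M$), and similarly $K \cap N \leq_{*} N$. Next, since $L$ is contained in both $M$ and $K$, we have $L \leq K \cap M$, and likewise $L \leq K \cap N$.

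Now the algebraicity hypotheses kick in: from $L \leq_{\text{alg}} M$ together with $L \leq K \cap M \leq_{*} M$, the definition of algebraic extension forces $K \cap M = M$, i.e.\ $M \leq K$. Symmetrically, $N \leq K$. Combining, $M + N \leq K$, and since $K \leq M+N$ by assumption, we conclude $K = M+N$. As this holds for every free factor of $M+N$ containing $L$, the extension $L \leq M+N$ admits no proper intermediate free factor, which is precisely the assertion $L \leq_{\text{alg}} M+N$.

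There does not seem to be a substantial obstacle here: the proof is essentially a two-line application of Proposition \ref{Intersection preserves freeness of extension} in each coordinate followed by invoking the definition of algebraicity. The only small subtlety is making sure that the identity $(M+N) \cap M = M$ (and the analogous one for $N$) is used correctly when specializing Proposition \ref{Intersection preserves freeness of extension}, and that $L$ is indeed contained in each of $K \cap M$ and $K \cap N$, which follows immediately from $L \leq M \cap N \cap K$.
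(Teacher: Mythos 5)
Your proposal is correct and matches the paper's proof essentially line for line: intersect an arbitrary intermediate free factor $K$ with $M$ and with $N$, apply Proposition \ref{Intersection preserves freeness of extension} to get $K\cap M\leq_{*}M$ and $K\cap N\leq_{*}N$, then invoke algebraicity of $L\leq M$ and $L\leq N$ to force $M,N\subseteq K$ and hence $K=M+N$. No meaningful differences.
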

\begin{proof}
Let $L\leq T\leq_{*}M+N$ be a free factor of $M+N$ containing $L$.
Intersecting with $M$, we obtain from Proposition \ref{Intersection preserves freeness of extension}
that $L\leq T\cap M\leq_{*}M$ which implies by algebraicity of $M$
that $T\cap M=M$. Intersecting similarly with $N$ we get $M,N\subseteq T$
and so $M+N\subseteq T$, as needed.
\end{proof}
\begin{prop}
\label{every extension has a finite-rank free factor} Let $N$ be
a free $\mathcal{A}$-module. Then for every finitely generated submodule
$M\leq N$ there exists a finitely generated free factor of $N$ containing
$M$.
\end{prop}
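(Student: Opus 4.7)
The plan is to produce the desired free factor directly from any chosen basis of $N$, using only the fact that elements of a free module have finite support with respect to any fixed basis. No deep structural property of free ideal rings is required for this statement: the same argument works over any ring, provided $N$ is assumed free.

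First I would fix a basis $\left\{ e_{j}\right\} _{j\in J}$ of $N$ together with a finite generating set $m_{1},\ldots,m_{k}$ of $M$. By the defining property of the direct sum $N=\bigoplus_{j\in J}e_{j}\mathcal{A}$, each $m_{i}$ has a unique expression as a finite $\mathcal{A}$-linear combination of basis elements, so its support $S_{i}\subseteq J$ is finite. Setting $S:=S_{1}\cup\cdots\cup S_{k}$, which is still a finite subset of $J$, I would define
\[
N_{0}:=\bigoplus_{j\in S}e_{j}\mathcal{A},\qquad N_{1}:=\bigoplus_{j\in J\setminus S}e_{j}\mathcal{A}.
\]

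Then I would verify the two required properties. By construction each $m_{i}$ lies in $N_{0}$, so $M\leq N_{0}$, and $N_{0}$ is finitely generated and free on $\left\{ e_{j}\right\} _{j\in S}$. The decomposition $N=N_{0}\oplus N_{1}$ furnishes a projection $N\rightarrow N_{0}$ (namely, the $\mathcal{A}$-module map that is the identity on $N_{0}$ and kills $N_{1}$), so by Proposition \ref{prop:equivalent characterizations of free factor} we conclude $N_{0}\leq_{*}N$.

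The only nontrivial observation is that a finite union of finite sets is finite; there is essentially no obstacle here. It is worth noting that the FIR hypothesis on $\mathcal{A}$ plays no role in this proposition: any finitely generated submodule of any free module is contained in a free direct summand of finite rank.
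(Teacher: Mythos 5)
Your proof is correct and takes essentially the same approach as the paper: both arguments collect the (finite) supports of a finite generating set of $M$ with respect to a fixed basis of $N$ and take the submodule generated by the corresponding basis elements. The only cosmetic difference is that the paper starts from a finite \emph{basis} of $M$ (available since $\mathcal{A}$ is a FIR) whereas you correctly observe that an arbitrary finite generating set suffices and that the FIR hypothesis plays no role here.
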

\begin{proof}
Let $C$ be a basis for $N$ and $B$ a finite basis for $M$. Each
$b\in B$ lies in $N$ and is, therefore, expressible as an $\mathcal{A}$-linear
combination in some finite subset $C_{b}\subseteq C$. Let $N'\leq N$
be the submodule generated by the finite subset $C':=\bigcup_{b\in B}C_{b}$
of $C$. Clearly, $N'$ satisfies the required properties.
\end{proof}
We are now ready to show the fact mentioned in Definition \ref{def: primitivity, free factor, algebraic, algebraic-free decomposition}:
the existence and uniqueness of the algebraic closure of any extension
of free $\mathcal{A}$-modules $M\leq N$, with at least one of $M$
or $N$ of finite rank. This is analogous to the case of free groups
(see \cite[Thm. 3.16]{Miasnikov2007}). 
\begin{thm}
\textbf{\label{Theorem: existence and uniqueness of Algebraic - free decomposition}(The
algebraic closure)} Let $N$ be a free $\mathcal{A}$-module and $M\leq N$
a submodule such that at least one of $M$ or $N$ is finitely generated.
Then there exists a unique algebraic extension $L$ of $M$ which
is a free factor of $N$, i.e., such that $M\leq_{\text{alg}}L\leq_{*}N$.
Furthermore, $L$ has the following equivalent characterizations:
\begin{enumerate}
\item $L=\bigcap_{M\leq L'\leq_{*}N}L'.$
\item $L=\bigcup_{M\leq_{\text{alg}}L'\leq N}L'$.
\item $L$ is the set of elements $x\in N$ such that $\varphi(x)=0$ for
every ${\cal A}$-module homomorphism $\varphi:N\to{\cal A}$ vanishing
on $M$. Namely, 
\[
L=\bigcap_{\substack{\varphi:N\rightarrow\mathcal{A}\,\text{such\,that}\\
\varphi\mid_{M}\equiv0
}
}\ker\varphi.
\]
\end{enumerate}
\end{thm}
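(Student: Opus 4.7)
The plan is to establish existence by defining $L$ via characterization~(1), derive uniqueness from it, and then identify this $L$ with the modules described in characterizations~(2) and~(3). First I would reduce to the case where $N$ is finitely generated: if only $M$ is, Proposition~\ref{every extension has a finite-rank free factor} supplies a finitely generated free factor $N' \leq_{*} N$ containing $M$. Any candidate $L$ with $M \leq_{\text{alg}} L \leq_{*} N$ is then forced to lie in $N'$, since Proposition~\ref{Intersection preserves freeness of extension} yields $L \cap N' \leq_{*} L$, and algebraicity of $L$ over $M$ forces $L \cap N' = L$. Combined with transitivity of the free-factor relation (immediate from Proposition~\ref{prop:equivalent characterizations of free factor}(3) by composing projections), this reduces both existence and uniqueness in $N$ to the corresponding statements in the finitely generated $N'$.

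Assume now that $N$ is finitely generated, and define $L := \bigcap_{M \leq L' \leq_{*} N} L'$. I would first observe that any strictly descending chain of free factors of $N$ has strictly decreasing ranks: if $L_2 \lneq L_1 \leq_{*} N$ are both free factors, Proposition~\ref{Intersection preserves freeness of extension} applied in $N$ gives $L_2 = L_1 \cap L_2 \leq_{*} L_1$, so $L_1 = L_2 \oplus L_1''$ with $L_1''$ a nonzero free summand of positive rank. Since ranks are bounded by $\operatorname{rk}(N)$, the intersection defining $L$ is in fact finite, and Corollary~\ref{Finite intersection of free factors is a free factor} yields $L \leq_{*} N$. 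Algebraicity of $M \leq L$ is then immediate: any intermediate free factor $M \leq T \leq_{*} L$ is also a free factor of $N$ by transitivity, hence appears in the intersection and equals $L$. Uniqueness is equally short: if both $L_1$ and $L_2$ satisfy $M \leq_{\text{alg}} L_i \leq_{*} N$, Proposition~\ref{Intersection preserves freeness of extension} gives $L_1 \cap L_2 \leq_{*} L_i$, and algebraicity over $M$ forces $L_1 \cap L_2 = L_1 = L_2$.

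For characterization~(2), I would first establish transitivity of algebraic extensions (another short application of Proposition~\ref{Intersection preserves freeness of extension}): then every algebraic $M \leq_{\text{alg}} L' \leq N$ sits inside its own algebraic closure $\tilde{L}$ in $N$, which by transitivity is algebraic over $M$ and hence equals $L$ by uniqueness. For characterization~(3), the inclusion $L \leq \bigcap_{\varphi} \ker\varphi$ uses that $\operatorname{Im}\varphi$ is a one-sided ideal in the FIR $\mathcal{A}$, hence free; the short exact sequence $0 \to \ker\varphi \to N \to \operatorname{Im}\varphi \to 0$ therefore splits, making $\ker\varphi$ a free factor of $N$ containing $M$ and thus containing $L$. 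For the reverse inclusion, I would fix a decomposition $N = L \oplus L'$; given $x$ in every such kernel, for any homomorphism $\psi: L' \to \mathcal{A}$ the composition $\psi \circ \pi_{L'}$ vanishes on $L \supseteq M$, forcing $\psi(\pi_{L'}(x)) = 0$, and evaluating coordinate functionals dual to a finite basis of $L'$ then forces $\pi_{L'}(x) = 0$, so $x \in L$.

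The step I expect to demand the most care is the reduction in the first paragraph: ensuring that every algebraic free-factor closure of $M$ inside the potentially infinitely generated $N$ actually sits inside the chosen finitely generated $N'$. This is what Proposition~\ref{Intersection preserves freeness of extension} delivers cleanly, and once in place, the entire remainder of the argument proceeds comfortably in the finite-rank setting.
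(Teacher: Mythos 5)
The proposal is correct and takes essentially the same approach as the paper, built around Proposition \ref{Intersection preserves freeness of extension}, the bound $\operatorname{rk}\le\operatorname{rk}(N)$ on ranks of free factors, and the coordinate-projection argument for characterization (3). The paper streamlines slightly — it picks a minimal-rank element of the poset of free factors of $N$ containing $M$ rather than first reducing to finitely generated $N$ and proving the full intersection is achieved by a finite sub-intersection, and it proves (2) by intersecting $L\le_{*}N$ with $L'$ directly rather than detouring through transitivity of algebraic extensions — but these differences are cosmetic and your detours are valid.
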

\begin{proof}
Let $X$ be the set of free factors of $N$ which contain $M$. Observe
that $X$ contains an element of finite rank: if $N$ is finitely
generated then $N\in X$ and if $M$ is finitely generated then this
is guaranteed by Proposition \ref{every extension has a finite-rank free factor}.
Let $L$ be an element of $X$ of minimal rank. This $L$ is a free
factor of $N$ by definition, and an algebraic extension of $M$:
otherwise, there would be some $M\leq L'\lneqq_{*}L\leq_{*}N$, contradicting
the minimality of $\text{rk}L$. The uniqueness will follow from properties
(1) and (2), which together imply that any other intermediate $M\leq_{\text{alg}}L'\leq_{*}N$
both contains $L$ and is contained in it.
\begin{enumerate}
\item Suppose there exists some intermediate free factor $M\leq L'\leq_{*}N$
which does not contain $L$. Then $L'\cap L$ is a proper free factor
of $L$, and is thus of smaller rank, contradicting the minimality
of $\text{rk}L$ in $X$.
\item Let $M\leq_{\text{alg}}L'\leq N$. Intersecting the free extension
$L\leq_{*}N$ with $L'$ gives $M\leq L'\cap L\leq_{*}L'$, which
by the algebraicity of $M\leq_{\text{alg}}L'$ implies $L'\cap L=L'$,
so $L'\subseteq L$.
\item On the one hand, for every $\varphi:N\rightarrow\mathcal{A}$ we have
$N/\ker\varphi\cong Im\varphi$ is a free $\mathcal{A}$-module (as
a submodule of $\mathcal{A}$) and so by Proposition \ref{prop:equivalent characterizations of free factor}
$\ker\varphi\leq_{*}N$. Thus,
\[
\bigcap_{\substack{\varphi:N\rightarrow\mathcal{A}\\
\varphi\mid_{M}\equiv0
}
}\ker\varphi\supseteq\bigcap_{M\leq L'\leq_{*}N}L'=L.
\]
On the other hand, let $x\in N\backslash L$. Since the quotient $N/L$
is a free $\mathcal{A}$-module (as $L\leq_{*}N$) and the image of
$x$ in $N/L$ is nonzero, there exists a projection onto some coordinate
$p_{x}:N/L\rightarrow\mathcal{A}$ such that $p_{x}\left(\pi\left(x\right)\right)\neq0,$
where $\pi:N\rightarrow N/L$ is the canonical projection. This shows
that $x\notin\ker\left(p_{x}\circ\pi\right)$ while $M\le\ker\left(p_{x}\circ\pi\right)$.
Hence $\bigcap_{\substack{\varphi:N\rightarrow\mathcal{A}\\
\varphi\mid_{M}\equiv0
}
}\ker\varphi\subseteq L$.
\end{enumerate}
\end{proof}
\begin{rem}
The requirement that at least one of $M$ or $N$ be finitely generated
is necessary: otherwise the intersection $\bigcap_{M\leq L'\leq_{*}N}L'$
might not be a free factor of $N$. See \cite{Burns1977} for a group-theoretic
example which is adaptable to our case by considering the associated
augmentation ideals (as in Section \ref{sec: algs for free groups}
below).
\end{rem}

\section{\label{sec:The-Duality-Induced}The Duality Induced by a Matrix over
a Free Ideal Ring}

Let $\mathcal{A}$ be again a free ideal ring and $Q\in\text{Mat}_{k\times m}({\cal A})$
a matrix over ${\cal A}$ which is fixed throughout this section.
We now introduce the duality between extensions of the row and column
spaces of $Q$. The main goal is to show that it can be used for characterizing
algebraicity (Theorem \ref{Thm: Equivalent conditions of algebraic extensions via duality in Introduction})
as well as for finding the algebraic closure (Theorem \ref{Theorem: algebraic-free decomposition-1}).

Most statements in this section will be stated for $\mathcal{A}$-modules
(i.e., either left or right), but will be proven for right $\mathcal{A}$-modules
alone. This is justified by considering the opposite ring $\mathcal{A}^{op}$,
which is identical to $\mathcal{A}$ as an abelian (additive) group
but such that the matrix multiplication (and ordinary multiplication)
on it is defined by $A\cdot_{op}B=\left(B^{T}A^{T}\right)^{T}$.
Since right modules over $\mathcal{A}$ correspond to left modules
over $\mathcal{A}^{op}$ and vice versa, the ring $\mathcal{A}^{op}$
is also a free ideal ring. Thus, statements about left $\mathcal{A}$-modules
will follow from their right-module analogues applied to $\mathcal{A}^{op}$.

Recall from Section \ref{subsec:A-Duality-intro} the definition of
the column space $R_{Q}\le{\cal A}^{k}$ and the row space $L_{Q}\le{\cal A}^{m}$
of $Q$. The column space of $Q$ can be defined equivalently as the
image of the right $\mathcal{A}$-module homomorphism $T_{Q}:\mathcal{\mathcal{A}}^{m}\rightarrow\mathcal{A}^{k}$
defined by $T_{Q}v=Qv$. The map $Q\mapsto T_{Q}$ is functorial in
the sense that if the matrix product $PQ$ is defined then $T_{PQ}=T_{P}\circ T_{Q}$.
As $Im\left(T_{P}\circ T_{Q}\right)\subseteq Im\left(T_{P}\right)$,
we obtain:
\begin{claim}
\label{matrix product Column space contained}For any pair of matrices
$A$ and $B$ over $\mathcal{A}$ such that the product $AB$ is defined
we have $L_{AB}\subseteq L_{B}$ and $R_{AB}\subseteq R_{A}$.
\end{claim}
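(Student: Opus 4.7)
The plan is to observe that the claim follows immediately from the functoriality remark made in the paragraph just above: the assignment $Q \mapsto T_Q$ satisfies $T_{AB} = T_A \circ T_B$ whenever the product $AB$ is defined, and the image of a composition of maps is always contained in the image of the outer map. Thus
\[
R_{AB} = \operatorname{Im}(T_{AB}) = \operatorname{Im}(T_A \circ T_B) \subseteq \operatorname{Im}(T_A) = R_A,
\]
which handles the column-space half of the statement.

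For the row-space inclusion, I would introduce the analogous left-multiplication operator $S_Q : \text{Mat}_{1\times k}(\mathcal{A}) \to \text{Mat}_{1\times m}(\mathcal{A})$ defined by $S_Q(v) = vQ$, which is a homomorphism of left $\mathcal{A}$-modules whose image is $L_Q$ by definition. Since $v(AB) = (vA)B$, this assignment is contravariantly functorial: $S_{AB} = S_B \circ S_A$. The same image argument then gives
\[
L_{AB} = \operatorname{Im}(S_{AB}) = \operatorname{Im}(S_B \circ S_A) \subseteq \operatorname{Im}(S_B) = L_B.
\]
Alternatively, one can deduce this from the column-space case already proved by invoking the opposite-ring convention recalled at the opening of this section: the row space of $Q$ over $\mathcal{A}$ is exactly the column space of $Q^{T}$ over $\mathcal{A}^{op}$, and the identity $B^T \cdot_{op} A^T = (AB)^T$ shows that the row-space inclusion for $AB$ over $\mathcal{A}$ is precisely the column-space inclusion for $B^T \cdot_{op} A^T$ over $\mathcal{A}^{op}$.

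There is no real obstacle here: the statement is essentially the one-line observation that composing with an additional linear map can only shrink, never enlarge, the image. The only minor point worth verifying explicitly is the direction of functoriality for $S_Q$ --- that $S_{AB}$ factors as $S_B \circ S_A$ rather than the other way around --- which is immediate from the associativity of matrix multiplication.
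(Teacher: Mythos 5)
Your proof is correct and takes essentially the same route as the paper: the paper proves the column-space inclusion exactly by noting that $T_{PQ}=T_P\circ T_Q$ and that the image of a composition is contained in the image of the outer map, and then relies on the opposite-ring convention stated at the start of the section to cover the row-space case. You simply make that second half explicit (via $S_Q$ or via $\mathcal{A}^{op}$), which is a harmless elaboration rather than a different argument.
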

If one of the matrices is invertible\footnote{Recall that a $t\times t$ matrix $D$ over an arbitrary ring is called
\emph{invertible} if there exists a $t\times t$ matrix $D'$ such
that $DD'=D'D=I$.}, we get the following strengthening.
\begin{claim}
\label{invertible matrix preserves column space}Let $A\in\text{Mat}{}_{k\times m}\left(\mathcal{A}\right).$
Then for every invertible $B\in\text{Mat}{}_{m\times m}\left(\mathcal{A}\right)$
we have $R_{AB}=R_{A}$ and for every invertible $C\in\text{Mat}{}_{k\times k}\left(\mathcal{A}\right)$
we have $L_{CA}=L_{A}$.
\end{claim}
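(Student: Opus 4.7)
The plan is to prove both equalities by a direct two-way inclusion argument, using Claim \ref{matrix product Column space contained} (which was just established) as the only non-trivial ingredient. Since the statement for $L_{CA}$ follows from the statement for $R_{AB}$ by passing to the opposite ring $\mathcal{A}^{op}$ (as noted at the beginning of this section), I would concentrate on proving $R_{AB}=R_A$ when $B$ is invertible.

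One inclusion is immediate: by Claim \ref{matrix product Column space contained} applied to the product $AB$, we get $R_{AB}\subseteq R_A$. For the reverse inclusion, the key observation is that invertibility of $B$ lets us express $A$ as a product involving $AB$, namely $A=(AB)B^{-1}$, which makes sense because $B^{-1}$ exists as an $m\times m$ matrix over $\mathcal{A}$ by assumption. Applying Claim \ref{matrix product Column space contained} once more to this product yields $R_A=R_{(AB)B^{-1}}\subseteq R_{AB}$, completing the equality.

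The statement for $L_{CA}=L_A$ is obtained in a completely symmetric way: Claim \ref{matrix product Column space contained} gives $L_{CA}\subseteq L_A$ directly, and writing $A=C^{-1}(CA)$ — using the existence of a two-sided inverse $C^{-1}$ — together with Claim \ref{matrix product Column space contained} gives $L_A=L_{C^{-1}(CA)}\subseteq L_{CA}$. Alternatively, this second part can simply be invoked from the first part via the $\mathcal{A}\leftrightarrow\mathcal{A}^{op}$ correspondence, since transposition interchanges row and column spaces and sends invertible matrices to invertible matrices.

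There is no real obstacle here; the only point worth flagging is that one must use the fact that a two-sided inverse exists (so that both $B^{-1}B=I$ and $BB^{-1}=I$ are available), which is built into the definition of invertibility stated in the footnote. No finite generation or rank assumption is needed, and the argument goes through over any ring, not just a free ideal ring.
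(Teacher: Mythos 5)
Your argument is exactly the paper's: one inclusion from Claim \ref{matrix product Column space contained}, and the reverse by writing $A=(AB)B^{-1}$ and applying the same claim. The paper leaves the $L_{CA}$ case to the standing left/right convention via $\mathcal{A}^{op}$, which you also note, so the proofs coincide.
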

\begin{proof}
On the one hand $R_{AB}\subseteq R_{A}$ and on the other $R_{A}=R_{AB\cdot B^{-1}}\subseteq R_{AB}.$
\end{proof}

\subsection{The Duality Induced by a Matrix}

We work in fixed ambient $\mathcal{A}$-modules -- the right $\mathcal{A}$-module
$\mathcal{A}^{k}$ and the left $\mathcal{A}$-module $\mathcal{A}^{m}$
-- and use the poset notation induced by the relation $\leq$ of
containment between submodules. We denote by $\leq_{\text{fin}}$
the relation of containment between finitely generated submodules.
For example, $[R_{Q},\infty)_{\text{fin}}$ denotes the set of finitely
generated submodules of $\mathcal{A}^{k}$ containing the column space
of $Q$. 

In this subsection we associate to each $M\in[R_{Q},\infty)$ (respectively,
$M\in[L_{Q},\infty)$) an element $M^{*Q}\in[L_{Q},\infty)$ (respectively,
$M^{*Q}\in[R_{Q},\infty)$) called its $Q$-dual. Recall that by Proposition
\ref{every extension has a finite-rank free factor}, each such $M\in[R_{Q},\infty)$
admits a finitely generated free factor $R_{Q}\le M_{\text{fin}}\le_{*}M$.
\begin{prop}
\label{prop: duality is well-defined before its definition}

\begin{enumerate}
\item Let $M\in[R_{Q},\infty)$. Choose some finitely generated free factor
$M_{\text{fin}}\leq_{*}M$ containing $R_{Q}$ and a basis $f_{1},f_{2},...,f_{t}$
for $M_{\text{fin}}$. Let $B\in\text{Mat}_{t\times m}\left(\mathcal{A}\right)$
be the unique matrix satisfying
\begin{equation}
Q=\begin{pmatrix}\uparrow & \uparrow &  & \uparrow\\
f_{1} & f_{2} & \dots & f_{t}\\
\downarrow & \downarrow &  & \downarrow
\end{pmatrix}B.\label{eq: duality definition for R_Q}
\end{equation}
Then the row space $L_{B}\leq\mathcal{A}^{m}$ is independent of the
choice of free factor and basis. Furthermore, it is finitely generated,
contains $L_{Q}$ and satisfies $\text{rk}L_{B}\leq\text{rk}M$.
\item Let $M\in[L_{Q},\infty)$. Choose some finitely generated free factor
$M_{\text{fin}}\leq_{*}M$ containing $L_{Q}$ and a basis $g_{1},g_{2},...,g_{t}$
for $M_{\text{fin}}$. Let $C\in\text{Mat}_{k\times t}\left(\mathcal{A}\right)$
be the unique matrix satisfying
\begin{equation}
Q=C\begin{pmatrix}\leftarrow & g_{1} & \rightarrow\\
\leftarrow & g_{2} & \rightarrow\\
 & \vdots\\
\leftarrow & g_{t} & \rightarrow
\end{pmatrix}.\label{eq: duality definition for L_Q}
\end{equation}
Then the column space $R_{C}\leq\mathcal{A}^{m}$ is independent of
the choice of free factor and basis. Furthermore, it is finitely generated,
contains $R_{Q}$ and satisfies $\text{rk}R_{C}\leq\text{rk}M$.
\end{enumerate}
\end{prop}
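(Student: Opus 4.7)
The plan is to prove part (1); part (2) will follow by applying the same argument in the opposite ring $\mathcal{A}^{\mathrm{op}}$, as explained at the opening of this section. Fix $M \in [R_Q, \infty)$ and, via Proposition \ref{every extension has a finite-rank free factor}, choose some finitely generated free factor $R_Q \le M_{\text{fin}} \le_* M$ together with a basis $f_1,\dots,f_t$. Since each column of $Q$ lies in $R_Q \le M_{\text{fin}}$ and the $f_i$ form a basis, the matrix $B \in \mathrm{Mat}_{t\times m}(\mathcal{A})$ of equation \eqref{eq: duality definition for R_Q} exists and is uniquely determined. Writing $F \in \mathrm{Mat}_{k\times t}(\mathcal{A})$ for the matrix whose columns are $f_1,\dots,f_t$, we have $Q=FB$, so Claim \ref{matrix product Column space contained} yields $L_Q = L_{FB} \subseteq L_B$. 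Moreover $L_B$ is spanned by the $t$ rows of $B$, giving $\mathrm{rk}(L_B) \le t = \mathrm{rk}(M_{\text{fin}}) \le \mathrm{rk}(M)$, where the last inequality holds because $M_{\text{fin}}$ is a free factor of $M$.

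It then remains to verify that $L_B$ is independent of the choices, which I would split into two steps. First I would handle the choice of basis for a fixed $M_{\text{fin}}$: any two bases of $M_{\text{fin}}$ are related by an invertible $P \in \mathrm{Mat}_{t\times t}(\mathcal{A})$ via $F'=FP$, so $Q=FB=F'(P^{-1}B)$ forces $B'=P^{-1}B$, and Claim \ref{invertible matrix preserves column space} immediately yields $L_{B'} = L_{P^{-1}B} = L_B$.

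Next I would handle the choice of free factor. Given two candidates $R_Q \le M_{\text{fin}}, M'_{\text{fin}} \le_* M$, the submodule $M_{\text{fin}} + M'_{\text{fin}}$ is finitely generated and $M$ is free, so Proposition \ref{every extension has a finite-rank free factor} produces a finitely generated free factor $M''_{\text{fin}} \le_* M$ containing both. Applying Proposition \ref{Intersection preserves freeness of extension} with $L = M''_{\text{fin}}$ and using $M_{\text{fin}} \cap M''_{\text{fin}} = M_{\text{fin}}$ gives $M_{\text{fin}} \le_* M''_{\text{fin}}$, and analogously for $M'_{\text{fin}}$. Hence a basis $f_1,\dots,f_t$ of $M_{\text{fin}}$ extends to a basis $f_1,\dots,f_{t'}$ of $M''_{\text{fin}}$, and the representing matrix $B''$ of $Q$ in this extended basis is obtained from $B$ by appending $t'-t$ zero rows (the columns of $Q$ already lie in $M_{\text{fin}}$, and expansion in the extended basis is unique). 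Thus $L_{B''}=L_B$, and by the symmetric argument for $M'_{\text{fin}}$ together with the basis-invariance from the previous step, also $L_{B''}=L_{B'}$, so $L_B = L_{B'}$.

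The main obstacle I foresee is this last step, which requires juggling two distinct free factors while carefully tracking how the representing matrix transforms under basis extension; the crux that makes the argument collapse neatly is the zero-padding observation, which in turn depends on the columns of $Q$ already lying in the smaller free factor together with uniqueness of representation in the extended basis.
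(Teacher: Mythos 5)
Your proposal is correct and, for the most part, mirrors the paper's argument; the one place it genuinely diverges is the reduction to the nested case in the proof of free-factor independence. The paper passes from the two arbitrary free factors $M_{\text{fin}}$ and $M'_{\text{fin}}$ to their intersection $M_{\text{fin}}\cap M'_{\text{fin}}$, invoking Corollary \ref{Finite intersection of free factors is a free factor} and Proposition \ref{Intersection preserves freeness of extension} to verify that this intersection is again a finitely generated free factor containing $R_Q$ and a free factor of each of $M_{\text{fin}},M'_{\text{fin}}$. You instead go \emph{upward}: you use Proposition \ref{every extension has a finite-rank free factor} to place both inside a finitely generated free factor $M''_{\text{fin}}\le_* M$ containing $M_{\text{fin}}+M'_{\text{fin}}$, then deduce $M_{\text{fin}},M'_{\text{fin}}\le_* M''_{\text{fin}}$ via Proposition \ref{Intersection preserves freeness of extension} (intersecting the free extension $M_{\text{fin}}\le_* M$ with $M''_{\text{fin}}$). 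Both routes land on the same nested-pair situation and dispatch it with the identical zero-padding observation, so they are of comparable length; yours has the mild advantage of not needing Corollary \ref{Finite intersection of free factors is a free factor} at all, so it draws on a slightly smaller toolkit. The one minor stylistic gap: you assert (correctly) that two bases of $M_{\text{fin}}$ are related by an invertible square matrix, while the paper derives the invertibility from the two change-of-basis relations $F=F'D$, $F'=FD'$; this is standard and not an error, merely a level-of-detail choice.
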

Equation (\ref{eq: duality definition for R_Q}) requires the $i$-th
column of $B$ to comprise of the unique coefficients in $\mathcal{A}$
expressing the $i$-th column of $Q$ in the basis $f_{1},f_{2},...,f_{t}$,
from which the existence and uniqueness of $B$ follows. A similar
argument using (\ref{eq: duality definition for L_Q}) shows the existence
and uniqueness of $C$. 

The submodule provided by Proposition \ref{prop: duality is well-defined before its definition}
is central to this paper.
\begin{defn}
\textbf{\label{Def: Q-dual}(Q-duality)} Let $M\in[R_{Q},\infty)$
(respectively, $M\in[L_{Q},\infty)$). The \emph{$Q$-dual of $M$},
denoted $M^{*Q}$, is the submodule $L_{B}$ of $\mathcal{A}^{m}$
(respectively, $R_{C}$ of $\mathcal{A}^{k}$) obtained in the procedure
described in Proposition \ref{prop: duality is well-defined before its definition}.
\end{defn}
\begin{proof}[Proof of Proposition \ref{prop: duality is well-defined before its definition}]
 Let $M\in[R_{Q},\infty)$ and let $M_{\text{fin}}$ be a finitely
generated free factor of $M$ containing $R_{Q}$. Denote the rank
of $M_{\text{fin}}$ by $t$. We first show non-dependence on the
chosen basis. Let $F$ and $F'$ be $k\times t$ matrices whose columns
form two bases for $M_{\text{fin}}$. Expressing each basis with the
other, there exist matrices $D,D'\in\text{Mat}{}_{t\times t}\left(\mathcal{A}\right)$
such that $F=F'D$ and $F'=FD'$. But now $F=FD'D$ and $F'=F'DD'$,
and since the columns of $F,F'$ are bases, this implies that $DD'=D'D=I$,
so $D$ is invertible. Write $Q=FB$ and $Q=F'B'$ as in (\ref{eq: duality definition for R_Q}).
Then $F'B'=FB=F'DB$ and so $B'=DB,$ which implies by Claim \ref{invertible matrix preserves column space}
that $L_{B'}=L_{B}$. 

We next show the non-dependence on the chosen free factor. Let $M_{\text{fin}}$
and $M_{\text{fin}}'$ be two free factors of $M$ containing $R_{Q}$
of respective finite ranks $t$ and $t'$. Let $F\in\text{Mat}_{k\times t}\left(\mathcal{A}\right)$
and $F'\in\text{Mat}_{k\times t'}\left(\mathcal{A}\right)$ be matrices
whose columns form respective bases of $M_{\text{fin}}$ and $M_{\text{fin}}'$.
The intersection $M_{\text{fin}}\cap M_{\text{fin}}'$ satisfies,
too, the assumptions in Proposition \ref{prop: duality is well-defined before its definition}:
it contains $R_{Q}$, it is a free factor of $M$ by Corollary \ref{Finite intersection of free factors is a free factor},
and it is finitely generated by Proposition \ref{Intersection preserves freeness of extension}.
Thus, it is enough to show that each of $M_{\text{fin}}$ and $M_{\text{fin}}'$
gives rise to the same left submodule as $M_{\text{fin}}\cap M_{\text{fin}}'$.
Since this intersection is a free factor of both $M_{\text{fin}}$
and $M_{\text{fin}}'$ (again, by Corollary \ref{Finite intersection of free factors is a free factor}),
we may assume without loss of generality that $M_{\text{fin}}'\leq_{*}M_{\text{fin}}$.
By the non-dependence on the chosen basis, we may further assume that
$F'$ is the submatrix of $F$ obtained by keeping the first $t'$
columns. Let $B'\in\text{Mat}_{t'\times m}\left(\mathcal{A}\right)$
be the unique matrix such that $Q=F'B'$. Let $B\in\text{Mat}_{t\times m}$
be obtained from $B'$ by adding $t-t'$ rows of zeros. Then $FB=F'B'=Q$,
so $B$ satisfies (\ref{eq: duality definition for R_Q}). Adding
trivial rows clearly does not alter the row space, and so $L_{B}=L_{B'}$
as needed.

Since $Q$ is a left-multiple of the matrix $B$ in (\ref{eq: duality definition for R_Q}),
by Claim \ref{matrix product Column space contained} we have $L_{Q}\leq L_{B}$.
Finally, the row space $L_{B}$ of $B$ is generated by its $t$ rows
and is therefore finitely generated. Since $t=\text{rk}M_{\text{fin}}$
and $M_{\text{fin}}$ is a free factor of $M$ we deduce $\text{rk}L_{B}\leq\text{rk}M$.
\end{proof}
\begin{example}
\label{exa:Q-dual of everything}The $Q$-dual of $\mathcal{A}^{k}$
is $L_{Q}$ and the $Q$-dual of $\mathcal{A}^{m}$ is $R_{Q}$. Indeed,
expressing $Q=I\cdot Q$, where the columns of $I\in\text{Mat}{}_{k\times k}\left(\mathcal{A}\right)$
form the standard basis of $\mathcal{A}^{k}$, the $Q$-dual of $\mathcal{A}^{k}$
is $L_{Q}$.
\end{example}
\begin{example}
For an arbitrary field $K$  and $F=\left\langle x\right\rangle $
the free group on a single generator $x$, let $Q$ be the matrix
with the single entry $x^{3}-1$. Let $R$ be the (right) ideal generated
by $x-1$. Then $x-1$ is a basis for $R$, and we may express $Q$
as $x^{3}-1=\left(x-1\right)\left(x^{2}+x+1\right).$ Thus, the $Q$-dual
of $R$ is the (left) ideal of $K\left[F\right]$ generated by $x^{2}+x+1$.
\end{example}

\subsection{Properties of the Duality Induced by a Matrix}

We next aim to show that the mappings $M\mapsto M^{*Q}$ previously
defined form a duality between the respective algebraic extensions
of the column and row spaces of $Q$ .
\begin{prop}[\textbf{The $Q$-dual is invariant under free extensions}]
\textbf{}\label{prop:Q-dual is invariant under free extensions}
Let $M_{1},M_{2}\in[R_{Q},\infty)$ or $M_{1},M_{2}\in[L_{Q},\infty)$.
If $M_{1}\leq_{*}M_{2}$ then $M_{1}^{*Q}=M_{2}^{*Q}$. 
\end{prop}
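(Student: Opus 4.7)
The plan is to reduce the statement directly to the well\-definedness of the $Q$\-dual already established in Proposition \ref{prop: duality is well-defined before its definition}. That proposition shows that in computing $M^{*Q}$, any finitely generated free factor of $M$ containing the appropriate base module ($R_Q$ or $L_Q$) yields the same row/column space. So the whole game is to find a single finitely generated free factor that witnesses the $Q$\-dual of both $M_1$ and $M_2$ simultaneously.

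I will treat the case $M_1,M_2\in[R_Q,\infty)$; the case $M_1,M_2\in[L_Q,\infty)$ follows by passing to $\mathcal{A}^{op}$ as explained at the start of Section \ref{sec:The-Duality-Induced}. First, invoke Proposition \ref{every extension has a finite-rank free factor} to choose a finitely generated free factor $M_{1,\text{fin}}\leq_{*}M_{1}$ containing $R_{Q}$. The key step is the observation that being a free factor is transitive: if $M_{1,\text{fin}}\leq_{*}M_{1}$ and $M_{1}\leq_{*}M_{2}$, then a basis of $M_{1,\text{fin}}$ can be extended to a basis of $M_{1}$, which can then be extended to a basis of $M_{2}$, so $M_{1,\text{fin}}\leq_{*}M_{2}$.

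Consequently, $M_{1,\text{fin}}$ is a finitely generated free factor of $M_{2}$ containing $R_{Q}$, and therefore it is a legitimate choice for computing $M_{2}^{*Q}$ according to Definition \ref{Def: Q-dual}. Fixing any basis of $M_{1,\text{fin}}$ and the unique matrix $B$ satisfying equation (\ref{eq: duality definition for R_Q}), the submodule $L_{B}$ equals both $M_{1}^{*Q}$ and $M_{2}^{*Q}$ by the independence of the choice of free factor (and basis) proven in Proposition \ref{prop: duality is well-defined before its definition}. Hence $M_{1}^{*Q}=M_{2}^{*Q}$, as desired.

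There is no real obstacle here: the substance was absorbed into the well\-definedness proof of the $Q$\-dual, and the only point to verify now is transitivity of the free factor relation, which is immediate from the basis-extension definition. The proof is essentially a one\-line bookkeeping argument once Proposition \ref{prop: duality is well-defined before its definition} is in hand.
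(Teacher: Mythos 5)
Your proof is correct and is essentially identical to the paper's, which in one line observes that since $M_{1}\leq_{*}M_{2}$, any free factor of $M_{1}$ is also a free factor of $M_{2}$, so the same $M_{\text{fin}}$ witnesses both $Q$-duals. You have simply spelled out the transitivity of $\leq_{*}$ and the appeal to well-definedness, which the paper leaves implicit.
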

\begin{proof}
Since $M_{1}\leq_{*}M_{2}$, any free factor of $M_{1}$ is also a
free factor of $M_{2}$. Thus, the same $M_{\text{fin}}$ may be used
in defining both $Q$-duals. 

\end{proof}
\begin{prop}
Taking the $Q$-dual inverts the order of inclusions.
\end{prop}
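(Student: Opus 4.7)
The plan is to exhibit nested finitely generated free factors compatible with the definition of the $Q$-dual, and then to deduce the containment directly from Claim \ref{matrix product Column space contained}. I treat the case $M_1 \leq M_2$ in $[R_Q,\infty)$; the case $[L_Q,\infty)$ follows by the opposite-ring symmetry noted at the start of the section.

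First, I would pick any finitely generated free factor $M_{1,\text{fin}}\leq_{*}M_1$ containing $R_Q$, arrange a basis $f_1,\ldots,f_t$ of it as the columns of a matrix $F$, and let $B_1\in\text{Mat}_{t\times m}(\mathcal{A})$ be the unique matrix with $Q=FB_1$; by Definition \ref{Def: Q-dual}, $M_1^{*Q}=L_{B_1}$. Because $M_2\leq \mathcal{A}^{k}$ and $\mathcal{A}$ is a FIR, $M_2$ is itself free, so Proposition \ref{every extension has a finite-rank free factor} applied to the finitely generated submodule $M_{1,\text{fin}}\leq M_2$ yields a finitely generated free factor $M_{2,\text{fin}}\leq_{*}M_2$ which contains $M_{1,\text{fin}}$ (and in particular $R_Q$). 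Fix a basis $g_1,\ldots,g_s$ of $M_{2,\text{fin}}$, arrange it as the columns of a matrix $G$, and let $B_2$ be the unique matrix with $Q=GB_2$, so that $M_2^{*Q}=L_{B_2}$.

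The containment is then obtained by comparing the two expansions of $Q$. Since each $f_i$ lies in $M_{2,\text{fin}}$, there is a unique matrix $A\in\text{Mat}_{s\times t}(\mathcal{A})$ with $F=GA$. Substituting, $Q=FB_1=GAB_1$, and the uniqueness of the expansion of the columns of $Q$ in the basis $g_1,\ldots,g_s$ forces $B_2=AB_1$. Claim \ref{matrix product Column space contained} then gives
\[
M_2^{*Q}=L_{B_2}=L_{AB_1}\leq L_{B_1}=M_1^{*Q},
\]
as desired.

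I do not anticipate any serious obstacle: once the nested free factors are set up, the argument is a bare uniqueness-of-coefficients calculation combined with the row-space containment for matrix products. The only delicate point is ensuring that a finitely generated free factor of $M_2$ can be chosen to contain the preselected $M_{1,\text{fin}}$, and this is handled by the freeness of $M_2$ together with Proposition \ref{every extension has a finite-rank free factor}.
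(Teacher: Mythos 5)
Your argument is correct, and its crux --- writing $Q=FB_1=GB_2$, expressing $F=GA$, concluding $B_2=AB_1$ by uniqueness of coefficients in the basis $g_1,\dots,g_s$, and then invoking Claim \ref{matrix product Column space contained} --- is exactly the paper's. Where you deviate is in constructing the nested finitely generated free factors $M_{1,\text{fin}}\le M_{2,\text{fin}}$. You go bottom-up: pick $M_{1,\text{fin}}\le_* M_1$ first, then enlarge it to a finitely generated free factor $M_{2,\text{fin}}\le_* M_2$ via Proposition \ref{every extension has a finite-rank free factor}, exploiting that $M_2$ is free as a submodule of $\mathcal{A}^{k}$. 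The paper goes top-down: it picks $M_{2,\text{fin}}\le_* M_2$ first, observes via Proposition \ref{Intersection preserves freeness of extension} that $M_1\cap M_{2,\text{fin}}\le_* M_1$, and then takes a finitely generated free factor $M_{1,\text{fin}}$ of that intersection, relying on transitivity of $\le_*$ to get $M_{1,\text{fin}}\le_* M_1$. Your variant is slightly leaner --- it avoids the intersection lemma and the transitivity step --- and both routes produce free factors in the sense required by Definition \ref{Def: Q-dual}, so the row-space containment goes through either way.
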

\begin{proof}
Let $M_{1},M_{2}\in[R_{Q},\infty)$ such that $M_{1}\leq M_{2}.$
We first show that we may choose for $i\in\left\{ 1,2\right\} $ finitely
generated free factors $M_{i,\text{fin}}\leq_{*}M_{i}$ containing
$R_{Q}$ such that $M_{1,\text{fin}}\leq M_{2,\text{fin}}$. Choose
first a finitely generated free factor $M_{2,\text{fin}}$ of $M_{2}$
containing $R_{Q}$. The intersection $M_{1}\cap M_{2,\text{fin}}$
contains $R_{Q}$ and is a free factor of $M_{1}\cap M_{2}=M_{1}$
by Proposition \ref{Intersection preserves freeness of extension}.
Choose a finitely generated free factor $M_{1,\text{fin}}$ of $M_{1}\cap M_{2,\text{fin}}$
containing $R_{Q}$. By definition, $M_{1,\text{fin}}\leq M_{2,\text{fin}}$
and by the transitivity of free factors $M_{1,\text{fin}}\leq_{*}M_{1}$.\\
For each $i\in\left\{ 1,2\right\} $ write $Q=F_{i}B_{i}$ where $F_{i}$
is a matrix whose columns form a basis for $M_{i,\text{fin}}$. Since
$M_{1,\text{fin}}\leq M_{2,\text{fin}}$, there exists a matrix $C$
such that $F_{1}=F_{2}C$, so $F_{2}B_{2}=Q=F_{1}B_{1}=F_{2}CB_{1}.$
Since the columns of $F_{2}$ form a basis, we deduce that $B_{2}=CB_{1}$,
so 
\[
M_{2}^{*Q}=L_{B_{2}}=L_{CB_{1}}\subseteq L_{B_{1}}=M_{1}^{*Q}.
\]
\end{proof}
\begin{prop}
\label{Double dual is free factor}For every $M\in[R_{Q},\infty)$
or $M\in[L_{Q},\infty)$, the double dual $\left(M^{*Q}\right)^{*Q}$
is a free factor of $M$.
\end{prop}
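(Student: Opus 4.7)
The plan is to unwind the definition of the double $Q$-dual into an explicit matrix identity from which a splitting can be extracted.

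I work in the right case $M \in [R_Q, \infty)$; the left case follows by the $\mathcal{A}^{\mathrm{op}}$ dictionary explained at the beginning of this section. By Proposition \ref{every extension has a finite-rank free factor} fix a finitely generated free factor $M_{\mathrm{fin}} \leq_{*} M$ containing $R_Q$, with basis given by the columns of a $k \times t$ matrix $F$, and write $Q = FB$ as in (\ref{eq: duality definition for R_Q}), so that $M^{*Q} = L_B$ by definition. Since $L_B$ is a finitely generated submodule of the free $\mathcal{A}$-module $\mathcal{A}^m$, it is itself free, and therefore serves as its own finitely generated free factor when computing $(L_B)^{*Q}$. Fixing a basis of $L_B$ as the rows of a matrix $G$ and writing $B = DG$ for the unique $t \times s$ matrix $D$, equation (\ref{eq: duality definition for L_Q}) identifies $(M^{*Q})^{*Q}$ with $R_{FD}$.

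The crux of the argument is that this specific $D$ admits a left inverse over $\mathcal{A}$. Since the rows of $B$ generate $L_B$, each basis row $g_j$ of $G$ is a left-$\mathcal{A}$-linear combination of the rows of $B$; collecting these combinations into an $s \times t$ matrix $A$ and substituting $B = DG$ gives $G = AB = (AD) G$. Because the rows of $G$ form a basis of $L_B$, comparing coefficients forces $AD = I_s$. This is the main obstacle: translating the \emph{basis} property of the rows of $G$ into a concrete matrix identity $AD = I_s$.

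Once $AD = I_s$ is in hand, the right $\mathcal{A}$-module homomorphism $T_D \colon \mathcal{A}^s \to \mathcal{A}^t$ is split injective with retraction $T_A$, so $T_D \circ T_A$ is an idempotent endomorphism of $\mathcal{A}^t$ whose image is $R_D$. This yields $\mathcal{A}^t = R_D \oplus \ker(T_A)$, and since the complement is a submodule of a free module over a free ideal ring it is free, so $R_D \leq_{*} \mathcal{A}^t$. Transporting this decomposition through the isomorphism $T_F \colon \mathcal{A}^t \xrightarrow{\sim} M_{\mathrm{fin}}$ gives $R_{FD} \leq_{*} M_{\mathrm{fin}}$, and transitivity of free factors with $M_{\mathrm{fin}} \leq_{*} M$ concludes $(M^{*Q})^{*Q} = R_{FD} \leq_{*} M$, as required.
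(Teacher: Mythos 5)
Your proof is correct and is essentially the paper's own argument (which follows Umirbaev), just with slightly different notation: your $G$, $D$, $A$ play the roles of the paper's $B'$, $C'$, $C$, and the key identity $AD = I_s$ is the paper's $CC' = I_s$; the paper then builds the projection $\tau(u) = F'C[u]_F$ directly, which is the same splitting idempotent you obtain as $T_F \circ T_D \circ T_A \circ T_F^{-1}$.
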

\begin{proof}
Let $M\in[R_{Q},\infty)$. By possibly replacing $M$ with a finitely
generated free factor containing $R_{Q}$, we may suppose without
loss of generality that $t:=\text{rk}M<\infty$. We first set up the
matrices defining the dual and the double dual. Let $F\in\text{Mat}_{k\times t}\left(\mathcal{A}\right)$
be a matrix whose columns form a basis for $M$. Let $B\in\text{Mat}_{t\times m}\left(\mathcal{A}\right)$
be the matrix satisfying (\ref{eq: duality definition for R_Q}) and
denote the rank of its row space $L_{B}$ by $s$. Let $B'\in\text{Mat}_{s\times m}\left(\mathcal{A}\right)$
be a matrix whose rows form a basis for $L_{B}$, and let $F'\in\text{Mat}_{k\times s}\left(\mathcal{A}\right)$
be the matrix satisfying $Q=F'B'$ as in (\ref{eq: duality definition for L_Q}).
By definition, $M^{*Q}=L_{B}$ and $\left(M^{*Q}\right)^{*Q}=R_{F'}$.

We next show that $\left(M^{*Q}\right)^{*Q}$ is contained in $M$.
Since $L_{B}=L_{B'}$, there exist matrices $C\in\text{Mat}_{s\times t}\left(\mathcal{A}\right)$
and $C'\in\text{Mat}_{t\times s}\left(\mathcal{A}\right)$ such that
$B'=CB$ and $B=C'B'.$ Thus, $F'B'=Q=FB=FC'B'$. Since the rows of
$B'$ form a basis we deduce that $F'=FC'$. In particular, 
\[
\left(M^{*Q}\right)^{*Q}=R_{F'}\leq R_{F}=M.
\]

It remains to show that the extension $R_{F'}\leq R_{F}$ is free,
which we show by following the outline of the argument of Umirbaev
in \cite{Umirbaev1996}. By Proposition \ref{prop:equivalent characterizations of free factor},
it is enough to construct an $\mathcal{A}$-module projection $\tau:R_{F}\rightarrow R_{F'}$.
Since $B'=CB=CC'B'$ and the rows of $B'$ form a basis, $CC'$ is
the $s\times s$ identity matrix. For every $u\in R_{F}$ denote by
$\left[u\right]_{F}\in\text{Mat}_{t\times1}\left(\mathcal{A}\right)$
the coordinate vector of $u$ in the basis which is the columns of
$F$, i.e., such that $u=F\left[u\right]_{F}$. Let $\tau:M\rightarrow M$
be defined by $\tau\left(u\right)=F'C\left[u\right]_{F}.$ The map
$\tau$ is a homomorphism of right $\mathcal{A}$-modules whose image
clearly lies in $R_{F'}$. Let $v=F'\left[v\right]_{F'}\in R_{F'}$.
Since $F'=FC',$ we have $\left[v\right]_{F}=C'\left[v\right]_{F'}$
so 
\[
\tau\left(v\right)=F'CC'\left[v\right]_{F'}=F'\left[v\right]_{F'}=v,
\]
and $\tau\mid_{R_{F'}}=Id_{R_{F'}}$.
\end{proof}
We are now ready to prove Theorem \ref{Thm: Equivalent conditions of algebraic extensions via duality in Introduction},
which characterize algebraicity via duality. Recall, specifically,
that this theorem states that if $M\in[R_{Q},\infty)$, then $(1)$
$M$ is algebraic over $R_{Q}$ if and only if $(2)$ $\left(M^{*Q}\right)^{*Q}=M$
if and only if $(3)$ $\text{rk}\left(M^{*Q}\right)=\text{rk}M$ (with
an equivalent statement if $M\in[L_{Q},\infty)$).
\begin{proof}[Proof of Theorem \ref{Thm: Equivalent conditions of algebraic extensions via duality in Introduction}]
 Let $M\in[R_{Q},\infty)$.\\
\uline{(1=>2)}: Suppose that $R_{Q}\leq_{\text{alg}}M$. By Proposition
\ref{Double dual is free factor}, we have $R_{Q}\leq\left(M^{*Q}\right)^{*Q}\leq_{*}M$,
and algebraicity implies that $\left(M^{*Q}\right)^{*Q}=M$.\\
\uline{(2=>3)}: Suppose that $\left(M^{*Q}\right)^{*Q}=M$. Since
taking the $Q$-dual cannot increase rank by Proposition \ref{prop: duality is well-defined before its definition},
we get:
\[
\text{rk}M=\text{rk}\left(\left(M^{*Q}\right)^{*Q}\right)\leq\text{rk}M^{*Q}\leq\text{rk}M,
\]
and so these inequalities hold as equalities and $\text{rk}M^{*Q}=\text{rk}M$.\\
\uline{(3=>1)}: Suppose that $\text{rk}M^{*Q}=\text{rk}M$, and
note that this is an equality of finite numbers as the rank of a $Q$-dual
is finite. Let $M'$ be some intermediate free factor $R_{Q}\leq M'\leq_{*}M$.
By Proposition \ref{prop:Q-dual is invariant under free extensions},
the $Q$-duals of $M'$ and $M$ are equal, so
\[
\text{rk}M'\leq\text{rk}M=\text{rk}M^{*Q}=\text{rk}\left(M'\right)^{*Q}\leq\text{rk}M'.
\]
Thus, $\text{rk}M'=\text{rk}M$ from which the freeness of $M'\leq_{*}M$
implies $M'=M.$
\end{proof}
\begin{prop}
\label{Prop: Q-dual is algebraic}For every $M\in[R_{Q},\infty)$
we have $L_{Q}\leq_{\text{alg}}M^{*Q}$ and for every $M\in[L_{Q},\infty)$
we have $R_{Q}\leq_{\text{alg}}M^{*Q}.$
\end{prop}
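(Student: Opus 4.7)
The plan is to leverage Theorem \ref{Thm: Equivalent conditions of algebraic extensions via duality in Introduction}, which reduces algebraicity of an extension to an equality between the extension and its double $Q$-dual, and then to exploit the two key properties of duality already established: it is invariant under free extensions (Proposition \ref{prop:Q-dual is invariant under free extensions}), and the double dual is always a free factor of the original module (Proposition \ref{Double dual is free factor}). By symmetry (via $\mathcal{A}^{op}$ as explained at the beginning of Section \ref{sec:The-Duality-Induced}) it suffices to treat the case $M\in[R_Q,\infty)$ and show that $L_Q\leq_{\text{alg}}M^{*Q}$.

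First I would observe that $M^{*Q}\in[L_Q,\infty)$ by Proposition \ref{prop: duality is well-defined before its definition}, so the desired algebraicity statement $L_Q\leq_{\text{alg}}M^{*Q}$ is amenable to Theorem \ref{Thm: Equivalent conditions of algebraic extensions via duality in Introduction}. By condition (2) of that theorem (applied now to the base $L_Q$ and the extension $M^{*Q}$), algebraicity is equivalent to the identity
\[
\bigl((M^{*Q})^{*Q}\bigr)^{*Q}=M^{*Q}.
\]
So the whole task reduces to proving this triple-dual identity.

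To establish it, I would apply Proposition \ref{Double dual is free factor} to $M\in[R_Q,\infty)$, which gives $(M^{*Q})^{*Q}\leq_{*}M$. Both of these submodules lie in $[R_Q,\infty)$: the inclusion $R_Q\leq(M^{*Q})^{*Q}$ holds because $(M^{*Q})^{*Q}$ is by construction a $Q$-dual of a module in $[L_Q,\infty)$, and Proposition \ref{prop: duality is well-defined before its definition} guarantees that such duals always contain $R_Q$. Since $(M^{*Q})^{*Q}\leq_{*}M$ is a free extension inside $[R_Q,\infty)$, Proposition \ref{prop:Q-dual is invariant under free extensions} yields $\bigl((M^{*Q})^{*Q}\bigr)^{*Q}=M^{*Q}$, as desired.

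I don't foresee a real obstacle beyond careful bookkeeping of which side (row versus column space) each dual lives on; the argument is a short orchestration of Propositions \ref{prop:Q-dual is invariant under free extensions} and \ref{Double dual is free factor} together with Theorem \ref{Thm: Equivalent conditions of algebraic extensions via duality in Introduction}. The mildly subtle point worth stressing in the write-up is that invariance under free extensions (Proposition \ref{prop:Q-dual is invariant under free extensions}) is being applied \emph{after} Proposition \ref{Double dual is free factor} has provided us with the free extension $(M^{*Q})^{*Q}\leq_{*}M$ needed to invoke it.
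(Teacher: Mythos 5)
Your proposal is correct and follows essentially the same argument as the paper: apply Proposition \ref{Double dual is free factor} to get $(M^{*Q})^{*Q}\leq_{*}M$, deduce $\bigl((M^{*Q})^{*Q}\bigr)^{*Q}=M^{*Q}$ from Proposition \ref{prop:Q-dual is invariant under free extensions}, and conclude via Theorem \ref{Thm: Equivalent conditions of algebraic extensions via duality in Introduction} applied to $M^{*Q}$. The bookkeeping you flag (which side each dual lives on) is exactly what the paper also handles implicitly, and your write-up is sound.
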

\begin{proof}
Let $M\in[R_{Q},\infty)$. By Proposition \ref{Double dual is free factor},
the double dual $\left(M^{*Q}\right)^{*Q}$ is a free factor of $M$.
Since the $Q$-dual is invariant under a free extension (Proposition
\ref{prop:Q-dual is invariant under free extensions}), we obtain
an equality of the triple and single $Q$-duals, namely $\left(\left(M^{*Q}\right)^{*Q}\right)^{*Q}=M^{*Q}.$
The claim now follows by applying Theorem \ref{Thm: Equivalent conditions of algebraic extensions via duality in Introduction}
to $M^{*Q}$.
\end{proof}
Several important corollaries of the above now follow. The first is
that when restricted to algebraic extensions, the $Q$-duality is
a bijection:
\begin{cor}
The $Q$-duality constitutes a duality between algebraic extensions
of the column and row spaces of $Q$ which preserves rank and inverts
the order of inclusions.
\end{cor}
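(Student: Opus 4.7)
The plan is to assemble the corollary from the machinery already in place rather than proving anything substantively new. I would verify, in order, the four claims packed into the statement: (a) the $Q$-dual sends algebraic extensions of $R_Q$ to algebraic extensions of $L_Q$ (and vice versa), (b) it is a bijection between these two posets, (c) it preserves rank, and (d) it inverts inclusions.

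For (a), given an algebraic extension $R_Q \le_{\text{alg}} M$ in $[R_Q,\infty)$, Proposition \ref{Prop: Q-dual is algebraic} directly gives $L_Q \le_{\text{alg}} M^{*Q}$, so the target lies in the algebraic extensions of $L_Q$ (and the symmetric statement handles the other direction). For (b), I would show that the $Q$-dual restricted to algebraic extensions is its own two-sided inverse. Concretely, if $R_Q \le_{\text{alg}} M$, then Theorem \ref{Thm: Equivalent conditions of algebraic extensions via duality in Introduction} (the implication $(1)\Rightarrow(2)$) gives $(M^{*Q})^{*Q} = M$; and by (a) together with the same theorem applied on the $L_Q$-side, the corresponding identity holds for any algebraic extension of $L_Q$. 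Hence double-dualization is the identity on both sides, so the restricted maps are mutually inverse bijections.

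For (c), rank preservation is precisely the implication $(1)\Rightarrow(3)$ in Theorem \ref{Thm: Equivalent conditions of algebraic extensions via duality in Introduction}: algebraicity of $R_Q\le M$ forces $\text{rk}(M^{*Q}) = \text{rk}(M)$, and symmetrically on the other side. For (d), the inclusion-reversing property was already established in full generality (for arbitrary elements of $[R_Q,\infty)$ or $[L_Q,\infty)$) in the proposition preceding Proposition \ref{Double dual is free factor}, so in particular it restricts to algebraic extensions.

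I do not expect any real obstacle here — the work was done in Proposition \ref{Prop: Q-dual is algebraic}, Theorem \ref{Thm: Equivalent conditions of algebraic extensions via duality in Introduction}, and the inclusion-reversal proposition. The only care needed is bookkeeping to make sure that both directions of the duality (from $[R_Q,\infty)_{\text{alg}}$ to $[L_Q,\infty)_{\text{alg}}$ and back) are being invoked, which is immediate from the symmetric formulation of each cited result. The corollary is essentially a clean restatement of these facts once one observes that algebraicity is the exact condition making the double-dual idempotent.
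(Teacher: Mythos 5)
Your assembly is correct and is exactly what the paper intends: the corollary is stated without explicit proof, presented as an immediate consequence of Proposition \ref{Prop: Q-dual is algebraic} (duals land in algebraic extensions), Theorem \ref{Thm: Equivalent conditions of algebraic extensions via duality in Introduction} (double dual is the identity on algebraic extensions, yielding bijectivity and rank preservation), and the unnamed proposition establishing that the $Q$-dual reverses inclusions. Your bookkeeping — invoking each ingredient on both sides of the duality via the symmetric statements — matches the paper's implicit argument.
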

Recalling Theorem \ref{Theorem: existence and uniqueness of Algebraic - free decomposition},
the row and column spaces of $Q$ are finitely generated and so for
any extension $M$ of one of them the algebraic closure is guaranteed
to exist. The second corollary we now draw is the main result of this
section -- Theorem \ref{Theorem: algebraic-free decomposition-1}
-- which states that the algebraic closure of $M$ is obtained as
the double $Q$-dual $\left(M^{*Q}\right)^{*Q}$:
\begin{proof}[Proof of Theorem \ref{Theorem: algebraic-free decomposition-1}]
 This is an immediate corollary from Propositions \ref{Prop: Q-dual is algebraic}
and \ref{Double dual is free factor}.
\end{proof}
The next corollary rephrases Theorem \ref{Theorem: algebraic-free decomposition-1}
for the case that one is only interested in testing for algebraicity
or freeness of an extension:
\begin{cor}[\textbf{Criteria for freeness and algebraicity via duality}]
\label{Cor: freeness testing via duality} Let $M\in[M_{0},\infty)$
where $M_{0}$ is either $R_{Q}$ or $L_{Q}$.
\begin{enumerate}
\item $M_{0}\leq_{*}M$ if and only if $\left(M^{*Q}\right)^{*Q}=M_{0}$.
\item $M_{0}\leq_{\text{alg}}M$ if and only if $\left(M^{*Q}\right)^{*Q}=M$.
\end{enumerate}
\end{cor}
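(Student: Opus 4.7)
The plan is to deduce both statements directly from results established earlier in this section, so the proof should be short; the only real task is to assemble the right pieces.

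Part (2) I view as simply a rephrasing of the equivalence $(1)\Leftrightarrow(2)$ of Theorem \ref{Thm: Equivalent conditions of algebraic extensions via duality in Introduction}, applied with $M_0$ in the role of the row or column space of $Q$. Nothing new needs to be proved.

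For part (1) I intend to treat the two directions separately. For the forward implication, I assume $M_0 \leq_* M$. By Proposition \ref{prop:Q-dual is invariant under free extensions}, the $Q$-dual is invariant under free extensions, so $M^{*Q} = M_0^{*Q}$, and therefore $(M^{*Q})^{*Q} = (M_0^{*Q})^{*Q}$. Since $M_0$ is trivially an algebraic extension of itself (no proper intermediate free factor can sit strictly between $M_0$ and $M_0$), part (2) applied to the trivial extension $M_0 \leq M_0$ gives $(M_0^{*Q})^{*Q} = M_0$, and combining yields $(M^{*Q})^{*Q} = M_0$ as required. For the reverse implication, I assume $(M^{*Q})^{*Q} = M_0$ and invoke Theorem \ref{Theorem: algebraic-free decomposition-1}, which tells us that the double $Q$-dual of $M$ is always a free factor of $M$; under the hypothesis this reads $M_0 \leq_* M$.

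I do not expect any real obstacle here, since the heavy lifting was done in Theorems \ref{Theorem: algebraic-free decomposition-1} and \ref{Thm: Equivalent conditions of algebraic extensions via duality in Introduction} together with Proposition \ref{prop:Q-dual is invariant under free extensions}. The only point deserving an explicit sentence is the triviality of $M_0 \leq_{\mathrm{alg}} M_0$, which is what allows part (2) to be chained inside the forward direction of part (1).
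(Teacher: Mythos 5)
Your proof is correct and is essentially the same as the paper's, which simply observes that the corollary is a restatement of Theorem \ref{Theorem: algebraic-free decomposition-1} (the double $Q$-dual is the unique $L$ with $M_0 \leq_{\text{alg}} L \leq_* M$, so $(M^{*Q})^{*Q} = M_0$ iff $M_0 \leq_* M$, and $(M^{*Q})^{*Q} = M$ iff $M_0 \leq_{\text{alg}} M$). Your routing of the forward direction of part (1) through Proposition \ref{prop:Q-dual is invariant under free extensions} and part (2) applied to the trivial extension is a harmless, equivalent variation of the direct argument from the uniqueness of the algebraic closure.
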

We finish this section by calculating the $Q$-duals of $R_{Q}$ and
$L_{Q}$ themselves:
\begin{prop}
\label{Q-dual of column space}We have $R_{Q}\leq_{\text{alg}}L_{Q}^{*Q}\leq_{*}\mathcal{A}^{k}$
and $L_{Q}\leq_{\text{alg}}R_{Q}^{*Q}\leq_{*}\mathcal{A}^{m}$, namely,
the $Q$-dual of the column space of $Q$ is the algebraic closure
of the row extension $L_{Q}\leq\mathcal{A}^{m}$ and vice versa.
\end{prop}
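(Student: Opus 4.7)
The plan is to observe that this proposition is essentially a direct specialization of Theorem \ref{Theorem: algebraic-free decomposition-1} to the ambient modules $\mathcal{A}^k$ and $\mathcal{A}^m$, combined with the computation of duals in Example \ref{exa:Q-dual of everything}. Concretely, I would first note that Example \ref{exa:Q-dual of everything} identifies the $Q$-dual of $\mathcal{A}^{k}$ with $L_{Q}$ and the $Q$-dual of $\mathcal{A}^{m}$ with $R_{Q}$. Therefore the double $Q$-dual of $\mathcal{A}^k$ equals $L_Q^{*Q}$, and the double $Q$-dual of $\mathcal{A}^m$ equals $R_Q^{*Q}$.

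Now I would apply Theorem \ref{Theorem: algebraic-free decomposition-1} with $M = \mathcal{A}^{k}$: since $R_{Q} \le \mathcal{A}^{k}$, the theorem gives $R_{Q} \le_{\text{alg}} (\mathcal{A}^{k})^{*Q*Q} \le_{*} \mathcal{A}^{k}$, which by the previous paragraph rewrites as $R_{Q} \le_{\text{alg}} L_{Q}^{*Q} \le_{*} \mathcal{A}^{k}$. The analogous application with $N = \mathcal{A}^{m}$ yields $L_{Q} \le_{\text{alg}} R_{Q}^{*Q} \le_{*} \mathcal{A}^{m}$.

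There is no real obstacle to overcome here; the proposition is essentially the statement of Theorem \ref{Theorem: algebraic-free decomposition-1} read ``backwards'' in the special case that the ambient extension is the full free module, using the base-case dual computation from Example \ref{exa:Q-dual of everything}. The only thing worth double-checking is that the hypotheses of Theorem \ref{Theorem: algebraic-free decomposition-1} (namely that $\mathcal{A}^{k}$ lies in $[R_{Q},\infty)$ and $\mathcal{A}^{m}$ in $[L_{Q},\infty)$) are trivially satisfied, so the invocation is legitimate.
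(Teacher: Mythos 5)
Your proposal is correct and matches the paper's own proof exactly: both identify $L_{Q}^{*Q}$ (resp.\ $R_{Q}^{*Q}$) as the double $Q$-dual of $\mathcal{A}^{k}$ (resp.\ $\mathcal{A}^{m}$) via Example \ref{exa:Q-dual of everything}, then invoke Theorem \ref{Theorem: algebraic-free decomposition-1}.
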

\begin{proof}
By Example \ref{exa:Q-dual of everything}, $\left(\mathcal{A}^{k}\right)^{*Q}=L_{Q}$
so $L_{Q}^{*Q}$ is the double $Q$-dual of $\mathcal{A}^{k}$, and
is thus the algebraic closure of the extension $R_{Q}\leq\mathcal{A}^{k}$.
\end{proof}
We note an elegant corollary of the above, also appearing in Cohn's
book \cite[Prop.~5.4.2]{Cohn2006}:
\begin{cor}
\label{cor: nice corollary about ranks}Let $Q\text{\ensuremath{\in\text{Mat}{}_{k\times m}\left(\mathcal{A}\right)} and let \ensuremath{R_{Q}\leq_{\text{alg}}R_{\text{max}}\leq_{*}\mathcal{A}^{k}} and \ensuremath{L_{Q}\leq_{\text{alg}}L_{\text{max}}\leq_{*}\mathcal{A}^{m}}}$
be the associated algebraic closures of the extensions of its column
space and row spaces, respectively, to the free modules in which they
lie. Then:
\[
\text{rk}\left(R_{Q}\right)=\text{rk}\left(L_{\text{max}}\right),\ \text{rk}\left(L_{Q}\right)=\text{rk}\left(R_{\text{max}}\right).
\]
\end{cor}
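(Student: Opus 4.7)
The plan is to derive this corollary directly from the duality machinery already established, specifically from Proposition \ref{Q-dual of column space}, Proposition \ref{prop:Q-dual is invariant under free extensions}, Example \ref{exa:Q-dual of everything}, and condition (3) of Theorem \ref{Thm: Equivalent conditions of algebraic extensions via duality in Introduction}. The key observation is that both algebraic closures $R_{\text{max}}$ and $L_{\text{max}}$ sit as free factors between the column/row space of $Q$ and the full ambient module, which lets us identify their $Q$-duals with the $Q$-duals of the full ambient modules.

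More precisely, I would first compute $L_{\text{max}}^{*Q}$. Since $L_Q \leq L_{\text{max}} \leq_{*} \mathcal{A}^{m}$, Proposition \ref{prop:Q-dual is invariant under free extensions} (invariance of the $Q$-dual under free extensions) yields $L_{\text{max}}^{*Q} = (\mathcal{A}^{m})^{*Q}$, and by Example \ref{exa:Q-dual of everything} the latter equals $R_{Q}$. An entirely symmetric argument gives $R_{\text{max}}^{*Q} = L_{Q}$.

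Next, I would apply the equivalence $(1) \Leftrightarrow (3)$ of Theorem \ref{Thm: Equivalent conditions of algebraic extensions via duality in Introduction}. Since $L_Q \leq_{\text{alg}} L_{\text{max}}$, condition (3) of that theorem gives $\text{rk}(L_{\text{max}}^{*Q}) = \text{rk}(L_{\text{max}})$, and combining with the previous paragraph yields $\text{rk}(R_Q) = \text{rk}(L_{\text{max}})$. The symmetric application with $R_Q \leq_{\text{alg}} R_{\text{max}}$ gives $\text{rk}(L_Q) = \text{rk}(R_{\text{max}})$, completing both rank equalities.

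There is no real obstacle here: the entire proof is a one-line substitution after invoking the invariance of the $Q$-dual under free extensions (to collapse $L_{\text{max}}^{*Q}$ down to $(\mathcal{A}^m)^{*Q} = R_Q$) and the rank-preservation built into the algebraic criterion of Theorem \ref{Thm: Equivalent conditions of algebraic extensions via duality in Introduction}. The only point that needs a moment of care is verifying that Proposition \ref{prop:Q-dual is invariant under free extensions} does apply to the pair $L_{\text{max}} \leq_{*} \mathcal{A}^{m}$, which it does since both modules lie in $[L_Q, \infty)$ and the extension is free by the definition of the algebraic closure.
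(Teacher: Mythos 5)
Your proof is correct and uses essentially the same machinery the paper has in mind when it calls this "an elegant corollary of the above." The paper's intended route is to note that $R_{\text{max}} = L_Q^{*Q}$ (Proposition \ref{Q-dual of column space} plus uniqueness of the algebraic closure) and that duality preserves rank on algebraic extensions, whereas you instead compute $L_{\text{max}}^{*Q} = (\mathcal{A}^m)^{*Q} = R_Q$ via Proposition \ref{prop:Q-dual is invariant under free extensions} and Example \ref{exa:Q-dual of everything}, then apply condition (3) of Theorem \ref{Thm: Equivalent conditions of algebraic extensions via duality in Introduction}; these are the same argument run from opposite ends of the duality, and both are sound.
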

\begin{rem}
If we take $\mathcal{A}$ to be a field, every extensions of $\mathcal{A}-$modules
(which are merely vector spaces over ${\cal A}$) is free, and so
the algebraic closures of Corollary \ref{cor: nice corollary about ranks}
are $R_{\text{max}}=R_{Q}$ and $L_{\text{max}}=L_{Q}$. In this case,
Corollary \ref{cor: nice corollary about ranks} states the familiar
result that the column rank and row rank of a given matrix over a
field are equal. For a matrix over an arbitrary free ideal ring this
is not necessarily the case: see Example \ref{exa:row rank not equal to column rank}. 
\end{rem}
\begin{example}
\label{exa:row rank not equal to column rank}Let $F=\left\langle x,y\right\rangle $
be the free group on $2$ generators and let $K$ be an arbitrary
field. Let $\mathcal{A}:=K\left[F\right]$ be the free group algebra
and $Q=\left(x-1,x-1,y-1\right)\in\text{Mat}_{1\times3}\left(\mathcal{A}\right)$.
We first observe that the row and column ranks of $Q$ disagree. Clearly,
the row space of $Q$ is of rank $\text{rk}L_{Q}=$$1$. Its column
space $R_{Q}=\left(x-1\right)\mathcal{A}+\left(y-1\right)\mathcal{A}$
is the augmentation ideal of $\mathcal{A}$ (see Section \ref{sec: algs for free groups}).
The elements $x-1$ and $y-1$ form a basis for $R_{Q}$, so $\text{rk}R_{Q}=2\neq\text{rk}L_{Q}$.
We next verify that the equalities $\text{rk}R_{\text{max}}=\text{rk}L_{Q}$
and $\text{rk}L_{\text{max}}=\text{rk}R_{Q}$ of Corollary \ref{cor: nice corollary about ranks}
are satisfied. The algebraic closure $R_{\text{max}}$ of $R_{Q}$
in $\mathcal{A}$ is a free factor of $\mathcal{A}$, but since $\text{rk}\mathcal{A}=1$,
its only free factors are itself and the trivial submodule. Thus,
$R_{\text{max}}=\mathcal{A}$ and $\text{rk}R_{\text{max}}=1=\text{rk}L_{Q}$.
The algebraic closure $L_{\text{max}}$ of $L_{Q}$ in $\mathcal{A}^{3}$
is the $Q$-dual of $R_{Q}$ by Proposition \ref{Q-dual of column space}.
Writing $Q=\left(x-1,y-1\right)\cdot\begin{pmatrix}1 & 1 & 0\\
0 & 0 & 1
\end{pmatrix},$ we deduce that $L_{\text{max}}=\mathcal{A}\left(1,1,0\right)+\mathcal{A}\left(0,0,1\right)$
and hence $\text{rk}L_{\text{max}}=2=\text{rk}R_{Q}$.
\end{example}

\section{\label{sec:The-Duality in the free group algebra}The Duality Induced
by $w-1$ in the Free Group Algebra}

In this section we specialize the duality of the previous section
to the case where $\mathcal{A}$ is a free group algebra. Our objective
is to rephrase the $Q$-duality in a way which uses only \uline{right}
$\mathcal{A}$-modules. This is possible using an explicit isomorphism
between $\mathcal{A}$ and its opposite ring. In the specific case
where $w$ is a word in the free group and $Q$ is the $\dot{1\times1}$
matrix $(w-1)$, the $Q$-duality associates to each extension inside
${\cal A}$ of the right ${\cal A}$-module $(w-1){\cal A}$, an object
of the same type: an (algebraic) extension of the same right $\mathcal{A}$-module
$\left(w-1\right){\cal A}$.

\subsection{$Q$-duality in the Free Group Algebra}

Fixing a field $K$ and a free group $F$, we continue with the free
ideal ring $\mathcal{A}=K\left[F\right]$.
\begin{defn}[right-left inversion]
\label{def: iota} Let $\iota:\mathcal{A}\rightarrow\mathcal{A}$
be the (unique) $K$-linear extension of the inversion map $u\mapsto u^{-1}$
on $F$, given explicitly by $\iota\left(\sum_{u\in F}\alpha_{u}u\right)=\sum_{u\in F}\alpha_{u}u^{-1}.$
Extend $\iota$ to matrices over $\mathcal{A}$ by letting $\iota\left(Q\right)$
be the element-wise application of $\iota$ to the transpose $Q^{T}$
of $Q$. 
\end{defn}
Observe that $\iota:\mathcal{A}^{op}\rightarrow\mathcal{A}$ is a
ring isomorphism. As left ideals in $\mathcal{A}$ are right ideals
in $\mathcal{A}^{op}$, $\iota$ maps right ideals of $\mathcal{A}$
to left ideals of $\mathcal{A}$ and vice versa. For any $k\in\mathbb{Z}_{\geq1}$,
when considering $\mathcal{A}^{k}$ as a right module (respectively,
left module) whose elements are column vectors (respectively, row
vectors) of length $k$, the (extended) inversion $\iota$ maps submodules
of the left $\mathcal{A}$-module $\mathcal{A}^{k}$ to submodules
of it as a right $\mathcal{A}$-module and vice-versa, and furthermore
maps a basis to a basis. For a matrix $Q$ over $\mathcal{A}$, column
extensions (respectively, row extensions) of $Q$ are mapped to row
extensions (respectively, column extensions) of $\iota\left(Q\right),$
preserving containment, freeness and algebraicity of extensions. Applying
$Q$-duality and then inverting with $\iota$ is the same as inverting
and then applying $\iota\left(Q\right)$-duality, namely, $\iota\left(M^{*Q}\right)=\left(\iota\left(M\right)\right){}^{*\iota\left(Q\right)}$.
Thus, we may specialize the results of the previous section to the
case of free group algebras using right $\mathcal{A}$-modules alone:
\begin{thm}[\textbf{$Q$-Duality in the free group algebra}]
\label{Thm: Q-duality in the Free Group Algebra} Let $Q$ be a matrix
over the free group algebra $K\left[F\right]$. The map $M\mapsto\iota\left(M^{*Q}\right)$
constitutes a duality between the algebraic extensions of the column
space of $Q$ and those of $\iota\left(Q\right)$, with inverse $M\mapsto\iota\left(M^{*\iota\left(Q\right)}\right).$
The duality preserves rank and inverts the order of inclusion. Taking
the double dual of a (not necessarily algebraic) column extension
$M$ of $Q$, first with respect to $Q$ and then with respect to
$\iota\left(Q\right)$, returns the unique algebraic extension of
$R_{Q}$ which is a free factor of $M$: 
\[
R_{Q}\leq_{\text{alg}}\iota\left(\left(\iota\left(M^{*Q}\right)\right)^{*\iota\left(Q\right)}\right)\leq_{*}M.
\]
\end{thm}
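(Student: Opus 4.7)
The plan is to reduce the theorem to the results of Section 3 by transporting the row-module side through the anti-automorphism $\iota$. The key bookkeeping is that $\iota$ is a ring isomorphism $\mathcal{A}^{op}\to\mathcal{A}$, and hence (in its extension to matrices and vectors) carries left submodules to right submodules and vice versa, preserves ranks and bases, preserves containments as set inclusions, and sends free (respectively, algebraic) extensions to free (respectively, algebraic) extensions; in particular, $\iota(L_Q)=R_{\iota(Q)}$, $\iota(R_Q)=L_{\iota(Q)}$, and $\iota(\iota(Q))=Q$. With these preservation properties as a foundation, all three assertions of the theorem become formal consequences of the general duality results already established.

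The first step will be to verify the naturality identity $\iota(M^{*Q})=(\iota(M))^{*\iota(Q)}$ (asserted just before the theorem). This is a direct calculation from Proposition \ref{prop: duality is well-defined before its definition}: picking a finitely generated free factor $M_{\text{fin}}\le_*M$ containing $R_Q$ whose basis is arranged in a matrix $F$, and writing $Q=FB$ as in (\ref{eq: duality definition for R_Q}), the anti-multiplicativity of $\iota$ (together with the transpose built into its matrix-level definition) yields $\iota(Q)=\iota(B)\iota(F)$, which is a factorization of the form (\ref{eq: duality definition for L_Q}) for $\iota(Q)$: the rows of $\iota(F)$ form a basis for the free factor $\iota(M_{\text{fin}})\le_*\iota(M)$ containing $L_{\iota(Q)}$. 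The $\iota(Q)$-dual of $\iota(M)$ is therefore $R_{\iota(B)}$, which coincides with $\iota(L_B)=\iota(M^{*Q})$. Applying $\iota$ to both sides and using $\iota^2=\mathrm{id}$ yields the symmetric identity $N^{*Q}=\iota((\iota(N))^{*\iota(Q)})$ for row extensions $N$ of $Q$, which is the key formula for the rest of the argument.

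With naturality in hand, the duality and its inverse follow formally. The corollary following Proposition \ref{Prop: Q-dual is algebraic} already provides a rank-preserving, order-reversing bijection $M\mapsto M^{*Q}$ between algebraic column extensions and algebraic row extensions of $Q$. Post-composing with $\iota$, which is a rank-preserving, inclusion-preserving bijection from algebraic row extensions of $Q$ onto algebraic column extensions of $\iota(Q)$, produces the claimed duality $M\mapsto\iota(M^{*Q})$. The inverse formula $N\mapsto\iota(N^{*\iota(Q)})$ is then obtained by reading this composition in reverse, invoking the naturality identity (applied with $\iota(Q)$ in place of $Q$) to identify $\iota(N^{*\iota(Q)})=(\iota(N))^{*Q}$; rank preservation and inclusion reversal are inherited from the two constituent bijections.

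Finally, the displayed formula is immediate from the naturality identity and Theorem \ref{Theorem: algebraic-free decomposition-1}: for an arbitrary column extension $M$ of $Q$, that theorem supplies $R_Q\le_{\text{alg}}(M^{*Q})^{*Q}\le_*M$, and the identity (with $N=M^{*Q}$) rewrites the middle term as $\iota((\iota(M^{*Q}))^{*\iota(Q)})$, exactly matching the display. There is no conceptual obstacle in any of this; the one place requiring care is the left-versus-right bookkeeping, in particular verifying that $\iota(FB)=\iota(B)\iota(F)$ at the matrix level so that the naturality identity holds symmetrically in both the ``column'' and ``row'' directions.
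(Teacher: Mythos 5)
Your proposal is correct and follows essentially the same route the paper takes: the paper establishes (in the discussion immediately preceding the theorem, rather than in a formal proof environment) that $\iota$ is a ring isomorphism $\mathcal{A}^{op}\to\mathcal{A}$ preserving bases, containment, freeness and algebraicity, asserts the naturality identity $\iota(M^{*Q})=(\iota(M))^{*\iota(Q)}$, and then invokes the Section~3 duality results. Your write-up supplies the explicit matrix-level verification of the naturality identity ($\iota(FB)=\iota(B)\iota(F)$, so a $Q=FB$ factorization of type~(\ref{eq: duality definition for R_Q}) becomes a $\iota(Q)=\iota(B)\iota(F)$ factorization of type~(\ref{eq: duality definition for L_Q})) that the paper states without proof — a genuine, if minor, tightening of the argument — and then derives the bijection, inverse, rank/order claims, and the displayed algebraic-closure formula exactly as the paper intends, by composing the corollary following Proposition~\ref{Prop: Q-dual is algebraic} and Theorem~\ref{Theorem: algebraic-free decomposition-1} with the $\iota$-transport.
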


\subsection{The Duality of Algebraic Extensions of $\left(w-\lambda\right)\mathcal{A}$}

We further specialize to the case where $w\in F$ is a fixed word,
$\lambda\in K^{*}$ an invertible scalar and $Q_{\lambda}$ the $1\times1$
matrix containing the element $w-\lambda$. In this case, the $1\times1$
matrix $\iota\left(Q_{\lambda}\right)$ contains the entry $\iota(w-\lambda)=w^{-1}-\lambda$,
which generates the same right ideal in $\mathcal{A}$ as $Q_{\lambda^{-1}}=w-\lambda^{-1}$:
indeed, $w-\lambda^{-1}$ is obtained from $w^{-1}-\lambda$ by multiplication
by the invertible element $-\lambda^{-1}w$. Furthermore, taking the
duals with respect to both matrices yields the same result:
\begin{lem}
\label{lem: duality with Q of inverse instead of iota of Q}Let $w\in F$
and $\lambda\in K^{*}$. If $w-\lambda^{-1}\in M\leq\mathcal{A}$
then $M^{*\iota\left(Q_{\lambda}\right)}=M^{*Q_{\lambda^{-1}}}.$
\end{lem}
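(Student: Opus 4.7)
The plan is to compute both $Q$-duals using one common finitely generated free factor $M_{\text{fin}}\leq_{*}M$ containing the relevant right ideal, and then to relate the resulting coefficient matrices via an invertible scalar. First I would verify that the two duals make sense for the same $M$ by showing $R_{\iota(Q_{\lambda})}=R_{Q_{\lambda^{-1}}}$: the identity $(w^{-1}-\lambda)(-\lambda^{-1}w)=w-\lambda^{-1}$, together with its mirror $(w-\lambda^{-1})(-\lambda w^{-1})=w^{-1}-\lambda$, exhibits each of the two generators as a right unit-multiple of the other, so the right ideals they generate coincide.

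Next, fixing $M_{\text{fin}}\leq_{*}M$ containing this ideal together with a basis $F=(f_{1},\dots,f_{t})$ of $M_{\text{fin}}$, I would expand $w^{-1}-\lambda=FB$ and $w-\lambda^{-1}=FB'$. Setting $u:=-\lambda^{-1}w\in\mathcal{A}^{\times}$, the identity $w-\lambda^{-1}=(w^{-1}-\lambda)u$ gives $FB'=F(Bu)$, and the uniqueness of coordinates in the basis $F$ forces $B'=Bu$. Applying Definition \ref{Def: Q-dual} and the fact that right-multiplication on the row space of $B$ by the scalar $u$ yields the row space of $Bu$, this gives $M^{*Q_{\lambda^{-1}}}=L_{B'}=L_{B}\cdot u=M^{*\iota(Q_{\lambda})}\cdot u$.

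The remaining task is thus to show $L_{B}\cdot u=L_{B}$, i.e., that $L_{B}$ is invariant under right-multiplication by $u$. Here the key point is that both $u=-\lambda^{-1}w$ and $w^{-1}-\lambda$ lie in the commutative Laurent subalgebra $K[\langle w\rangle]\subseteq\mathcal{A}$, so $u(w^{-1}-\lambda)=(w^{-1}-\lambda)u$. Applying this commutativity to the basis expansion produces the equality $\sum_{i}(uf_{i})b_{i}=u(w^{-1}-\lambda)=(w^{-1}-\lambda)u=\sum_{i}f_{i}(b_{i}u)$ inside $M_{\text{fin}}$; provided each $uf_{i}$ can be expanded in the basis $F$ as $uf_{i}=\sum_{j}f_{j}c_{ji}$, comparing coefficients yields $b_{i}u=\sum_{j}c_{ij}b_{j}\in L_{B}$. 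The inclusion $L_{B}\subseteq L_{B}\cdot u$ then follows symmetrically from the companion identity involving $u^{-1}=-\lambda w^{-1}$, which also commutes with $w-\lambda^{-1}$.

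The main obstacle I anticipate is that $M$, and hence $M_{\text{fin}}$, is only a \emph{right} $\mathcal{A}$-module, so the elements $uf_{i}$ need not lie in $M_{\text{fin}}$; this is what blocks the naive coordinate comparison. I expect the resolution to exploit the flexibility in choosing $M_{\text{fin}}$---for instance, enlarging $M_{\text{fin}}$ within $M$ so that it absorbs the relevant $u$- and $u^{-1}$-translates of basis elements (possible because the base right ideal $R$ itself is $u$-stable, as $uq=qu\in q\mathcal{A}$)---and then invoking Proposition \ref{prop:Q-dual is invariant under free extensions} to confirm that the computed dual is unchanged. An alternative route, which I would keep in reserve, uses the hom-theoretic description $M^{*Q}=\{\varphi(Q):\varphi\in\mathrm{Hom}_{\mathcal{A}}(M,\mathcal{A})\}$: then the task becomes constructing, for each $\varphi$, a hom $\psi$ with $\psi(w^{-1}-\lambda)=\varphi(w^{-1}-\lambda)\cdot u$, which can be done by setting $\psi(x):=\varphi(u^{-1}x)$ once the appropriate closure issue has been handled.
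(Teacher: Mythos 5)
Your reduction is set up correctly: fixing one $M_{\text{fin}}\le_{*}M$ with basis row $F$, writing $w^{-1}-\lambda=FB$ and $w-\lambda^{-1}=FB'$, and using $w-\lambda^{-1}=(w^{-1}-\lambda)u$ with $u=-\lambda^{-1}w$ to get $B'=Bu$ by uniqueness of coordinates does reduce the lemma to the claim $L_{B}\cdot u=L_{B}$. But the argument you give for that claim does not close, and you essentially acknowledge this yourself. The comparison-of-coefficients step needs $uf_{i}\in M_{\text{fin}}$ (or at least in some free module whose basis is $(f_{j})$), and this is exactly what fails: $M$ is only a right ideal, so left-multiplication by $u$ does not preserve it. Your proposed fixes do not repair this. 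Enlarging $M_{\text{fin}}$ inside $M$ cannot help, since even $uf_{i}\in M$ is not guaranteed; the observation that $(w-\lambda^{-1})\mathcal{A}$ is $u$-stable is correct but irrelevant, because the obstruction concerns $M$, not the base ideal. The hom-theoretic variant $\psi(x):=\varphi(u^{-1}x)$ (or $\varphi(ux)$) has the identical defect: $u^{-1}x$ need not lie in the domain $M$ of $\varphi$, so $\psi$ is not defined.

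The paper's proof sidesteps this entirely with a short algebraic trick. Writing $w-\lambda^{-1}=fg$ with $f$ the basis row and $g$ the coefficient column, one multiplies this scalar relation on the left by the column $-\lambda gw^{-1}$; expanding the left side gives $-\lambda g+gw^{-1}$ and the right side is $-\lambda\,(gw^{-1}f)\,g$ with $gw^{-1}f$ a $t\times t$ matrix over $\mathcal{A}$, so $gw^{-1}=(\lambda I-\lambda\,gw^{-1}f)\,g$. This displays each entry of $gw^{-1}$ as a left $\mathcal{A}$-combination of the entries of $g$, giving one inclusion; the reverse inclusion follows symmetrically by multiplying $w-\lambda^{-1}=fg$ on the left by $g$ and on the right by $w^{-1}$. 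The computation takes place entirely inside $\mathcal{A}$ and never needs left-translates of basis elements to lie in $M$, which is precisely the issue your approach could not overcome. So the proposal identifies the correct target equality but contains a genuine gap in proving it, and the paper's route is structurally different.
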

\begin{proof}
As $(w-\lambda^{-1}){\cal A}=\iota(w-\lambda){\cal A}$, the module
$M$ extends $R_{\iota(Q)}$ as well. Let $M_{\text{fin}}$ be a finitely
generated free factor of $M$ containing $R_{\iota\left(Q\right)}$.
Let $f$ be a row vector whose elements form a basis for $M_{\text{fin}}$,
and let $g$ be a column vector such that 
\begin{equation}
w-\lambda^{-1}=fg.\label{eq: coeffs}
\end{equation}
Multiplying (\ref{eq: coeffs}) from the right by $-\lambda w^{-1}$
expresses $w^{-1}-\lambda=fg\cdot\left(-\lambda w^{-1}\right)$, so
$M^{*Q_{\lambda^{-1}}}$ and $M^{*\iota\left(Q_{\lambda}\right)}$
are the left ideals generated by the entries of $g$ and $gw^{-1}$,
respectively. (The coefficient $-\lambda$ can be omitted as it clearly
does not change the ideal generated: $-\lambda$ commutes with the
entries of $g$, while $w^{-1}$ generally does not.) To show that
the two ideals coincide, multiply (\ref{eq: coeffs}) from the left
by $\left(-\lambda gw^{-1}\right)$ to obtain
\[
-\lambda g+gw^{-1}=-\lambda gw^{-1}fg\ \ \ \Longrightarrow\ \ \ gw^{-1}=\left(\lambda I-\lambda gw^{-1}f\right)g,
\]
where $I$ is the identity matrix. This shows that $M^{*\iota\left(Q_{\lambda}\right)}\subseteq M^{*Q_{\lambda^{-1}}}$.
A similar argument, multiplying (\ref{eq: coeffs}) from the left
by $g$ and from the right by $w^{-1}$, shows that $g=\left(gf-\lambda^{-1}I\right)gw^{-1}$,
and so $M^{*Q_{\lambda^{-1}}}\subseteq M^{*\iota\left(Q_{\lambda}\right)}$
as well.
\end{proof}
Lemma \ref{lem: duality with Q of inverse instead of iota of Q} allows
us to use the duality with respect to $Q_{\lambda^{-1}}$ instead
of $\iota\left(Q_{\lambda}\right)$, obtaining a more symmetric form
for the duality in this case:
\begin{cor}[\textbf{$Q$-duality for algebraic extensions of $w-\lambda$}]
\textbf{}\label{cor: w-duality} For every $w\in F$ and $\lambda\in K^{*}$,
the maps $J\mapsto\iota\left(J^{*Q_{\lambda}}\right)$ and $J\mapsto\iota\left(J^{*Q_{\lambda^{-1}}}\right)$
constitute a duality between the respective algebraic extensions of
the right ideals $\left(w-\lambda\right)\mathcal{A}$ and $\left(w-\lambda^{-1}\right)\mathcal{A}$
in $\mathcal{A}$. The duality preserves rank and inverts the order
of inclusion. For a (not necessarily algebraic) extension $\left(w-\lambda\right)\mathcal{A}\leq J$
we have 
\[
\left(w-\lambda\right)\mathcal{A}\leq_{\text{alg}}\iota\left(\left(\iota\left(J^{*Q_{\lambda}}\right)\right)^{*Q_{\lambda^{-1}}}\right)\leq_{*}J.
\]
\end{cor}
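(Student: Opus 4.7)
The plan is to obtain this corollary as a specialization of Theorem \ref{Thm: Q-duality in the Free Group Algebra} to the $1\times 1$ matrix $Q_\lambda = (w-\lambda)$, using Lemma \ref{lem: duality with Q of inverse instead of iota of Q} to replace the dual with respect to $\iota(Q_\lambda)$ by the more symmetric dual with respect to $Q_{\lambda^{-1}}$.

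First I would apply Theorem \ref{Thm: Q-duality in the Free Group Algebra} with $Q = Q_\lambda$. This yields a rank-preserving, inclusion-reversing duality $J \mapsto \iota(J^{*Q_\lambda})$ between algebraic extensions of the column space $R_{Q_\lambda} = (w-\lambda)\mathcal{A}$ and algebraic extensions of $R_{\iota(Q_\lambda)}$, with inverse $J' \mapsto \iota\bigl((J')^{*\iota(Q_\lambda)}\bigr)$. Next I would identify the target on the dual side: the single entry of $\iota(Q_\lambda)$ is $w^{-1}-\lambda$, and because $w-\lambda^{-1} = (-\lambda^{-1}w)(w^{-1}-\lambda)$ with $-\lambda^{-1}w$ a unit in $\mathcal{A}$, we get $R_{\iota(Q_\lambda)} = (w-\lambda^{-1})\mathcal{A} = R_{Q_{\lambda^{-1}}}$. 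So the theorem gives exactly a duality between algebraic extensions of $(w-\lambda)\mathcal{A}$ and those of $(w-\lambda^{-1})\mathcal{A}$, with the rank-preserving and inclusion-reversing properties automatic.

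To put the inverse in the stated form, note that any algebraic extension $J'$ of $(w-\lambda^{-1})\mathcal{A}$ contains $w-\lambda^{-1}$, so the hypothesis of Lemma \ref{lem: duality with Q of inverse instead of iota of Q} is met and $(J')^{*\iota(Q_\lambda)} = (J')^{*Q_{\lambda^{-1}}}$, showing the inverse is $J' \mapsto \iota\bigl((J')^{*Q_{\lambda^{-1}}}\bigr)$. The algebraic-closure formula follows by the same substitution: Theorem \ref{Thm: Q-duality in the Free Group Algebra} already gives $(w-\lambda)\mathcal{A} \leq_{\text{alg}} \iota\bigl((\iota(J^{*Q_\lambda}))^{*\iota(Q_\lambda)}\bigr) \leq_* J$ for an arbitrary extension $J \supseteq (w-\lambda)\mathcal{A}$, and the intermediate module $\iota(J^{*Q_\lambda})$, being an extension of $R_{\iota(Q_\lambda)}$, contains $w-\lambda^{-1}$, so Lemma \ref{lem: duality with Q of inverse instead of iota of Q} allows replacing $*\iota(Q_\lambda)$ by $*Q_{\lambda^{-1}}$ in the outer dual.

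I do not expect a serious obstacle here; the entire proof is essentially a translation of the preceding theorem through the identification $R_{\iota(Q_\lambda)} = R_{Q_{\lambda^{-1}}}$. The only care needed is to verify at each invocation of Lemma \ref{lem: duality with Q of inverse instead of iota of Q} that the module being dualized is itself an extension of $(w-\lambda^{-1})\mathcal{A}$, which in both cases is immediate from the context.
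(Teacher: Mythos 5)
Your proposal is correct and follows the same route as the paper: specialize Theorem \ref{Thm: Q-duality in the Free Group Algebra} to $Q = Q_\lambda$, identify $R_{\iota(Q_\lambda)} = (w-\lambda^{-1})\mathcal{A}$ via the unit $-\lambda^{-1}w$, and then use Lemma \ref{lem: duality with Q of inverse instead of iota of Q} to replace each dualization with respect to $\iota(Q_\lambda)$ by one with respect to $Q_{\lambda^{-1}}$. The check that $\iota(J^{*Q_\lambda})$ contains $w-\lambda^{-1}$ before invoking the lemma is the only point requiring care, and you handle it correctly.
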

When $\lambda=1$ (or $\lambda=-1$) we have $\lambda=\lambda^{-1}$,
in which case the map $J\mapsto\iota\left(J^{*Q_{^{\lambda}}}\right)$
is an involution on the set of algebraic extensions of the ideal $\left(w-1\right)\mathcal{A}$
(or $\left(w+1\right)\mathcal{A}$), proving Theorem \ref{thm: there exists an involution on extensions of w-1}
as a special case of Corollary \ref{cor: w-duality}.

\section{\label{sec: Theorem about Phi and Duality}Proof of Theorem \ref{Thm: Phi invariant under duality}}

In the current section, it is imperative that the field $K$ be finite.
So fix a finite field $K$ of order $q$ and a word $w\in F$, and
let $\mathcal{A}=K\left[F\right]$. Recall from Definition \ref{def: Big phi}
that for an extension $I\leq J$ of free right $\mathcal{A}$-modules
with $\text{rk}J<\infty$, the quantity $\phi_{I,J}\left(N\right)$
is defined to be $q^{N}\mathbb{P}\left(I\leq\ker\varphi_{J}\right)$,
where the probability is taken over an independently and uniformly
chosen right $K\left[F\right]$-module structure on $K^{N}$ and $K\left[F\right]$-module
homomorphism $\varphi_{J}:J\rightarrow K^{N}$.

To see that there are finitely many right $K\left[F\right]$-module
structures on $K^{N}$, recall that $r$ marks the rank of $F$ and
fix a basis $B=\left\{ b_{1},\ldots,b_{r}\right\} $ of $F$. Every
function $g\colon B\to GL_{N}\left(K\right)$ extends uniquely to
a morphism of $K$-algebras $\hat{g}\colon K\left[F\right]\to\text{Mat}_{N\times N}\left(K\right)$.
This defines a right ${\cal A}$-module $M_{g}^{\text{right}}$ (respectively,
a left $\mathcal{A}$-module $M_{g}^{\text{left}}$) on the $K$-vector
space $K^{N}$ of row vectors (respectively, column vectors) of length
$N$ with entries in $K$. This recipe gives a bijection between all
possible maps $g$ as above and all right (or left) ${\cal A}$-module
structures on $K^{N}$. Thus, there are exactly $\left|GL_{N}\left(K\right)\right|^{r}$
right $K\left[F\right]$-module structures on $K^{N}$ -- a finite
amount since $K$ is finite.

For every $g:B\to GL_{N}\left(K\right)$, an $\mathcal{A}$-module
homomorphisms $J\rightarrow M_{g}^{\text{right}}$ is uniquely determined
by the images in $K^{N}$ of some fixed basis of $J$. Since $J$
is free, each such choice of $\text{rk}J$ images in $K^{N}$ extends
to an $\mathcal{A}$-module homomorphism $J\rightarrow M_{g}^{\text{right}}.$
Thus, there are $\left|K^{N}\right|^{\text{rk}J}$ possible $\mathcal{A}$-module
homomorphisms $\varphi_{J}:J\rightarrow M_{g}^{\text{right}}$ --
again, finitely many as $K$ is finite and $\text{rk}J<\infty$. 

We now turn to the proof of Theorem \ref{Thm: Phi invariant under duality},
which states that $\phi$ is invariant under $Q$-duality when $I$
and $J$ are algebraic extensions of the column space of the matrix
$Q\in\text{Mat}_{k\times m}\left(K\left[F\right]\right)$. 
\begin{proof}[Proof of Theorem \ref{Thm: Phi invariant under duality}]
 Along the current proof we abbreviate $G_{N}:=\text{GL}_{N}(K)$.
We use the notation from the paragraphs above to write
\begin{align*}
\phi_{I,J}\left(N\right) & =q^{N}\cdot\frac{1}{\left|G_{N}\right|^{r}}\sum_{g\colon B\to G_{N}}\mathbb{P}_{\varphi_{J}:J\rightarrow M_{g}^{\text{right}}}\left(I\leq\ker\varphi_{J}\right).\\
\phi_{J^{*Q},I^{*Q}}\left(N\right) & =q^{N}\cdot\frac{1}{\left|G_{N}\right|^{r}}\sum_{g\colon B\to G_{N}}\mathbb{P}_{\varphi_{I^{*Q}}:I^{*Q}\rightarrow M_{g}^{\text{left}}}\left(J^{*Q}\leq\ker\varphi_{I^{*Q}}\right).
\end{align*}
The result will follow if we show that the two probabilities above
agree for each $g\colon B\to G_{N}$, and so we continue with a fixed
$g\colon B\to G_{N}$.

Let $k$ and $m$ be the respective number of rows and columns in
the matrix $Q$. Any algebraic extension $M$ of the column space
$R_{Q}$ is necessarily finitely generated. For example, this follows
by duality since $M=\left(M^{*Q}\right)^{*Q}$ so $M$ is finitely
generated as a $Q$-dual. In particular, $I$ and $J$ are finitely
generated, and denote their respective ranks by $s=\text{rk}I$ and
$t=\text{rk}J$. Choose matrices $S\in\text{Mat}_{k\times s}\left(\mathcal{A}\right)$
and $T\in\text{Mat}_{k\times t}\left(\mathcal{A}\right)$ whose columns
form bases for $I$ and $J$, respectively. Let $S'\in\text{Mat}_{s\times m}\left(\mathcal{A}\right)$
and $T'\in\text{Mat}_{t\times m}\left(\mathcal{A}\right)$ be the
unique matrices expressing $Q=SS'=TT'$. By Theorem \ref{Thm: Equivalent conditions of algebraic extensions via duality in Introduction}
and the algebraicity of $I$ and $J$, the rows of $S',T'$ are bases
(and not only generating sets) for $I^{*Q}$ and $J^{*Q}$, respectively.
Since $I\leq J$, there exists an $t\times s$ matrix $C=\left(c_{ij}\right)_{i,j}$
such that $S=TC$, so $TCS'=TT'$. This implies, as the columns of
$T$ are a basis, that $CS'=T'$.

As $J$ is a free $\mathcal{A}$-module, choosing $\varphi_{J}$ means
choosing uniformly the images $v_{1},v_{2},...,v_{t}\in M_{g}^{\text{right}}$
of the columns $q_{1},q_{2},...,q_{t}$ of $T$. For each such choice,
the images of the columns $p_{1},p_{2},...,p_{s}$ of $S$ are determined
by the relation $S=TC$ which obliges $\varphi_{J}\left(p_{i}\right)=\sum_{j=1}^{t}v_{j}\hat{g}\left(c_{ji}\right)$.
In particular, the row vector $\left(\varphi_{J}\left(p_{1}\right),\varphi_{J}\left(p_{2}\right),...,\varphi_{J}\left(p_{s}\right)\right)$
of length $sN$ is obtained from the vector $\left(v_{1},v_{2},...,v_{t}\right)$
of length $tN$ by multiplication with the $K$-valued matrix $\hat{g}\left(C\right)$,
which is obtained from $C$ by substituting each $c_{ij}$ with the
$N\times N$ block $\hat{g}\left(c_{ij}\right)$. Thus, $I\leq\ker\varphi_{J}$
happens exactly when $\left(v_{1},v_{2},...,v_{t}\right)$ is in the
(left) kernel of $\hat{g}(C)$, which is of dimension $tN-\text{rk}\left(\hat{g}(C)\right)$
(recall that $\hat{g}(C)$ is a matrix over $K$ and its rank is the
usual notion of a rank of a matrix). Hence
\[
\mathbb{P}_{\varphi_{J}:J\rightarrow M_{g}^{\text{right}}}\left(I\leq\ker\varphi_{J}\right)=\frac{q^{tN-\text{rk}\left(\hat{g}(C)\right)}}{q^{tN}}=q^{-\text{rk}\left(\hat{g}(C)\right)}.
\]

Since $CS'=T',$ a similar calculation for $\varphi_{J^{*Q}}$ shows
that for a homomorphism $\varphi_{I^{*Q}}:I^{*Q}\rightarrow M_{g}^{\text{left}}$,
the images of the rows of $T'$ are obtained from the (column vector
of) images of the rows of $S'$ by multiplication, this time from
the left, with the same matrix $\hat{g}(C)$. Since its row and column
rank are equal, we similarly obtain 
\[
\mathbb{P}_{\varphi_{I^{*Q}}:I^{*Q}\rightarrow M_{g}^{\text{left}}}\left(J^{*Q}\leq\ker\varphi_{I^{*Q}}\right)=q^{-rk\left(\hat{g}(C)\right)},
\]
finishing the proof.
\end{proof}
The algebraicity of $I$ is essential to Theorem \ref{Thm: Phi invariant under duality}
since extending $I$ freely to $I\leq_{*}I'\leq J$ can result in
$\phi_{I',J}$ being smaller than $\phi_{I,J}$ while the $Q$-duals
remain the same. In contrast, the assumption that $R_{Q}\leq_{\text{alg}}J$
is, in fact, redundant. 
\begin{cor}
\label{Cor: Phi invariant more general}Let $Q$ be a matrix over
$\mathcal{A}$. Let $I$ and $J$ be extensions of the column space
of $Q$ such that $R_{Q}\leq_{\text{alg}}I\le J$ and $\text{rk}J<\infty$.
Then $\phi_{I,J}=\phi_{J^{*Q},I^{*Q}}.$
\end{cor}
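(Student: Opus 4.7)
The plan is to reduce the corollary to Theorem \ref{Thm: Phi invariant under duality} by replacing $J$ with its algebraic closure over $R_Q$ inside $J$, and then checking that neither side of the desired equality is affected by this replacement.

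First, let $\tilde{J} := (J^{*Q})^{*Q}$. By Theorem \ref{Theorem: algebraic-free decomposition-1}, $\tilde{J}$ is the algebraic closure of $R_Q$ in $J$, so $R_Q \leq_{\text{alg}} \tilde{J} \leq_{*} J$. I next claim that $I \leq \tilde{J}$. Indeed, intersecting the free extension $\tilde{J} \leq_{*} J$ with $I \leq J$ gives, via Proposition \ref{Intersection preserves freeness of extension}, that $I \cap \tilde{J} \leq_{*} I$. Since $R_Q \leq I \cap \tilde{J}$ and $R_Q \leq_{\text{alg}} I$, the intermediate free factor $I \cap \tilde{J}$ must equal $I$, so $I \leq \tilde{J}$. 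Now both $I$ and $\tilde{J}$ are algebraic extensions of $R_Q$ with $I \leq \tilde{J}$, so Theorem \ref{Thm: Phi invariant under duality} applies and yields
\[
\phi_{I,\tilde{J}} = \phi_{\tilde{J}^{*Q},\,I^{*Q}}.
\]
Moreover, $\tilde{J} \leq_{*} J$ combined with Proposition \ref{prop:Q-dual is invariant under free extensions} gives $\tilde{J}^{*Q} = J^{*Q}$, so the right-hand side equals $\phi_{J^{*Q},\,I^{*Q}}$.

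It remains to show $\phi_{I,J} = \phi_{I,\tilde{J}}$; this will be the only ``new'' ingredient. Write $J = \tilde{J} \oplus J'$ where $J'$ is a free complement (which exists since $\tilde{J} \leq_{*} J$). For a fixed right $\mathcal{A}$-module structure on $K^N$ coming from some $g\colon B \to \text{GL}_N(K)$, a uniformly random $\mathcal{A}$-module homomorphism $\varphi_J : J \to M_g^{\text{right}}$ is equivalent to choosing independently and uniformly a pair $(\varphi_{\tilde{J}}, \varphi_{J'})$ of homomorphisms from $\tilde{J}$ and $J'$ respectively (by concatenating bases of $\tilde{J}$ and $J'$ to obtain a basis of $J$). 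Since $I \leq \tilde{J}$, the event $I \leq \ker \varphi_J$ depends only on $\varphi_{\tilde{J}}$, and so
\[
\mathbb{P}\bigl(I \leq \ker \varphi_J\bigr) = \mathbb{P}\bigl(I \leq \ker \varphi_{\tilde{J}}\bigr).
\]
Averaging over $g$ yields $\phi_{I,J} = \phi_{I,\tilde{J}}$, and chaining the equalities gives $\phi_{I,J} = \phi_{J^{*Q},I^{*Q}}$, as desired.

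The main (though still minor) obstacle is the last step: making precise that passing from $\tilde{J}$ to its free extension $J$ adds ``independent noise'' which is invisible to the event $\{I \leq \ker \varphi_J\}$. All other steps are direct applications of results already established in the paper (existence/uniqueness of the algebraic closure, invariance of the $Q$-dual under free extensions, and Theorem \ref{Thm: Phi invariant under duality} itself).
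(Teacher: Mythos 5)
Your proof is correct and follows essentially the same route as the paper: replace $J$ by the algebraic closure $\tilde{J}$ of $R_Q$ in $J$, apply Theorem \ref{Thm: Phi invariant under duality} there, use Proposition \ref{prop:Q-dual is invariant under free extensions} for $\tilde{J}^{*Q}=J^{*Q}$, and observe that extending $\tilde{J}$ to $J$ only adds independent images that cannot affect the event $I\le\ker\varphi_J$. The only cosmetic difference is that you establish $I\le\tilde{J}$ via Proposition \ref{Intersection preserves freeness of extension} and algebraicity of $I$, while the paper cites characterization (2) in Theorem \ref{Theorem: existence and uniqueness of Algebraic - free decomposition} directly.
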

\begin{proof}
Let $J'$ be the algebraic closure of the extension $R_{Q}\leq J$.
By Theorem \ref{Theorem: existence and uniqueness of Algebraic - free decomposition},
$J'$ contains $I$ since $R_{Q}\leq_{\text{alg}}I\leq J$. Since
$R_{Q}\leq_{\text{alg}}J'$, by Theorem \ref{Thm: Phi invariant under duality}
we have $\phi_{I,J'}\left(N\right)=\phi_{\left(J'\right)^{*Q},I^{*Q}}$,
so it is enough to show that replacing $J'$ with $J$ leaves both
sides unchanged. Indeed, as $J'\leq_{*}J$, by Proposition \ref{prop:Q-dual is invariant under free extensions}
their $Q$-duals coincide and so $\phi_{J^{*Q},I^{*Q}}\left(N\right)=\phi_{\left(J'\right)^{*Q},I^{*Q}}$.
Furthermore, a uniformly random homomorphism from $J$ can be selected
by first choosing the images of a basis of $J'$ and then, independently,
the images of elements extending this basis to a basis of $J$. Hence,
$\phi_{I,J}\left(N\right)=\phi_{I,J'}\left(N\right)$.
\end{proof}

\section{\label{sec: Algs for group algebras}An Algorithm for Computing the
Algebraic Closure in Free Group Algebras}

In this section we provide the algorithms of Theorem \ref{thm: There exists algorithms for alg-free decomp}
for finding the algebraic closure, testing for algebraicity and testing
for freeness (Algorithms \ref{alg: Get-Algebraic-Free}, \ref{alg: Is-Algebraic-Extension}
and \ref{alg: Is-Free-Extension} respectively) for extensions of
finitely generated free modules over the free group algebra. The algorithms
use the duality of Section \ref{sec:The-Duality-Induced}, together
with Rosenmann's algorithm from \cite{Rosenmann1993}. Given an arbitrary
finite set of elements of the free group algebra $K[F]$, Rosenmann's
algorithm finds a basis for the ideal generated by that set. This
basis is deterministic (depends, basically, only on the ideal and
not on the particular generating set), and we call it the \emph{Rosenmann
Basis} of the ideal. See Section \ref{subsec: Rosenmann's-Algorithm-for}
below for more details about Rosenmann's work.

We will need two minor extensions to Rosenmann's algorithm. First,
the duality involves the coefficients  by which certain elements
of a module are expressed in terms of a given basis, so we describe
in Algorithm \ref{Algorithm: coefficient extraction} how to extract
the coefficients expressing an element of an ideal in its Rosenmann
basis. Second, we bootstrap Rosenmann's algorithm to also support
submodules of $K\left[F\right]^{k}$ for $k>1$.

\subsection{\label{subsec: Rosenmann's-Algorithm-for}Rosenmann's Algorithm for
Right Ideals in $K\left[F\right]$}

In \cite{Rosenmann1993}, Rosenmann describes an algorithm which receives
a finite generating set for a right ideal $I\leq K\left[F\right]$.
The algorithm uses a fixed well-order $<$ on the elements of $F$.
With respect to this order, there are shown to exist three distinguished
objects of interest associated to $I$, each of which satisfies some
form of minimality. The first is a basis $\alpha_{1},\alpha_{2},...,\alpha_{m}$
for $I$ where $m=\text{rk}I\in\mathbb{Z}_{\geq0}$, which we call
the \emph{Rosenmann basis} of $I$. The second is a \emph{Schreier
transversal} $T_{I}$ for $I$ -- a subset of $F$ which is closed
under taking prefixes such that its $K$-linear span satisfies $Span_{K}\left(T_{I}\right)\cap I=\left\{ 0\right\} $
and $Span_{K}\left(T_{I}\right)+I=K\left[F\right]$. The third is
a \emph{Gr�bner basis} for $I$, which is composed of the basis elements
$\alpha_{1},\alpha_{2},...,\alpha_{m}$ together with elements $\beta_{1},\beta_{2},...,\beta_{m}$
of $I$ called their \emph{seconds}\footnote{If $I=K\left[F\right]$ the algorithm returns the unity of $K\left[F\right]$
as $\alpha_{1}$ without a second: $\alpha_{1}$ alone is already
a\emph{ }Gr�bner basis for $I$ in this case.}. The algorithm outputs the Rosenmann basis and the Gr�bner basis
of $I$. Notably, the output depends only on $I$ and not on the finite
generating set of $I$ given as input.

Since $T_{I}$ is a Schreier transversal, the $I$-coset of any element
$f\in K\left[F\right]$ contains a unique representative $\phi_{I}\left(f\right)$
lying in $Span_{K}\left(T_{I}\right)$. We call $\phi_{I}\left(f\right)$
the \emph{remainder of $f$ modulo the Rosenmann basis of $I$}. As
explained by Rosenmann, the Gr�bner basis of $I$ allows for an algorithmic
reduction of an element $f\in K\left[F\right]$ to its remainder.
It is our goal in this section is to explain how the Gr�bner basis
can also be used to perform a long division with respect to the Rosenmann
basis - that is, to extract the unique coefficients $g_{1},g_{2},...,g_{m}\in K\left[F\right]$
expressing $f=\sum_{i=1}^{m}\alpha_{i}g_{i}$+$\phi_{I}\left(f\right)$
(Algorithm \ref{Algorithm: coefficient extraction}).

We summarize the reduction process and its key properties from \cite{Rosenmann1993}.
The well-ordering of $F$ extends to a well-order on finite subsets
of $F$ by declaring for distinct subsets $A,B\subseteq F$ that $A<B$
if the maximal element in their symmetric difference lies in $B$.
With respect to this order, for every $f\in K\left[F\right]$, the
remainder $\phi_{I}\left(f\right)$ is the unique element with minimal
support in its $I$-coset $f+I$. The reduction of $f$ to its remainder
$\phi_{I}\left(f\right)$ is done by a sequence of reduction steps.
Each step replaces the current element with another element of the
same coset $f+I$ with smaller support. Since the order on supports
is a well-order, the algorithm must terminate after a finite number
of steps.

We next explain the stopping condition. An element $f\in K\left[F\right]$
is reduced if and only if it lies in $Span_{K}\left(T_{I}\right)$.
Adopting Rosenmann's notation, the maximal element of $F$ in the
support of a nonzero element of $K\left[F\right]$ is called its \emph{LPP}
(``Leading Power Product''). The Schreier transversal $T_{I}$ is
characterized as the set of words of $F$ which do not have as a prefix
any of the $\text{LPP}$s of $\alpha_{1},\alpha_{2},..,\alpha_{m},\beta_{1},\beta_{2},...,\beta_{m}$.
Thus, using the Gr�bner basis, we may determine whether a given word
$u\in F$ lies in $T_{I}$ by checking if it has any prefix among
this finite set.

For the reduction step, suppose that $f\in K\left[F\right]$ is not
reduced. Then there exists some $w\in F$ in the support of $f$ which
has as a prefix the LPP of some $\gamma\in\left\{ \alpha_{1},\alpha_{2},...,\alpha_{m},\beta_{1},\beta_{2},...,\beta_{m}\right\} $.
Write $w=\text{LPP}\left(\gamma\right)w'$, where $w'\in F$ and the
product is without cancellation. The order used on $F$ satisfies
the following additional property: If $u,u'\in F$ satisfy $u<u'$
then $uv<u'v$ for every $v\in F$ such that there is no cancellation
in the product $u'v$. This property implies that $\text{LPP}\left(\gamma w'\right)=w$.
Each element in the Gr�bner basis is \emph{monic}, meaning that the
coefficient of its $\text{LPP}$ is $1$. The reduction step now replaces
$f$ by $f'=f-c\gamma w'$, where $c\in K^{*}$ is the coefficient
of $w$ in $f$. Since $\gamma\in I$, the new element $f'$ lies
in the same $I$-coset as $f$. Furthermore, let $\Delta$ be the
symmetric difference of the supports of $f$ and $f'$. Then $\Delta$
is contained in the support of $c\gamma w'$, whose $\text{LPP}$
is $w$. But $w\in\Delta$, so the support of $f'$ is indeed smaller
than the support of $f$.

Performing the reduction on some $f\in K\left[F\right]$ while keeping
track of the reduction steps, we obtain elements $s_{1},s_{2},...,s_{m},t_{1},t_{2},...,t_{m}\in K\left[F\right]$
such that $f=\sum_{i=1}^{m}\alpha_{i}s_{i}+\sum_{i=1}^{m}\beta_{i}t_{i}+\phi_{I}\left(f\right)$.
The seconds $\beta_{1},\beta_{2},...,\beta_{m}$ are obtained during
Rosenmann's algorithm from the firsts as some explicit $K$$\left[F\right]$-linear
combination $\left(\beta_{1},\beta_{2},...,\beta_{m}\right)=\left(\alpha_{1},\alpha_{2},...,\alpha_{m}\right)C$
where $C\in\text{Mat}{}_{m\times m}\left(K\left[F\right]\right)$
is an invertible upper-triangular matrix. We use this matrix to rewrite
$f=\sum_{i=1}^{m}\alpha_{i}g_{i}+\phi_{I}\left(f\right)$, where $g_{1},g_{2},...,g_{m}$
are defined by $g=s+Ct$ for $s$ and $t$ the column vectors in $K\left[F\right]^{m}$
with respective entries $s_{1},s_{2},...,s_{m}$ and $t_{1},t_{2},...,t_{m}$.
While not explicitly done so by Rosenmann, we consider the matrix
$C$ to be part of the output of Rosenmann's algorithm.

\begin{algorithm}[H]
\caption{Extract-Coefficients\label{Algorithm: coefficient extraction}}
\begin{algorithmic}[1] 
\Statex{\textbf{Input:} A generating set $f_{1},f_{2},...,f_{k}\in K\left[F\right]$ for a right ideal $I$ and an element $f\in K\left[F\right]$.}
\Statex{\textbf{Output:} Coefficients $g_{1},g_{2},...,g_{m}\in  K\left[F\right]$ and $f'\in Span_{K}\left(T_{I}\right)$ such that $f=\sum_{i=1}^{m}\alpha_{i}g_{i}+f'$, where $\alpha_{1},\alpha_{2},...,\alpha_{m}$ is the Rosenmann basis of $I$ and $T_{I}$ its associated Schreier transversal.}
\vspace{2pt}
\State{Perform Rosenmann's algorithm on $f_{1},f_{2},...,f_{k}$}
\If{the output is the unity of $K\left[F\right]$}
	\State {\Return { $f$ as $g_{1}$ and $0$ as $f'$ } }
\EndIf
\State {save the output as $\alpha_{1},\alpha_{2},...,\alpha_{m},\beta_{1},\beta_{2},...,\beta_{m}$ and the matrix $C$ expressing $\overline{\beta}=\overline{\alpha} C$.}
\State {for every $1\leq i\leq m$ initialize $s_{i} \gets 0$ and $t_{i} \gets 0$}
\State {$f' \gets f$, $\text{changed\_}f' \gets True$}
\While{$\text{changed\_}f'$}
	\State {$\text{changed\_}f' \gets False$}
	\ForAll{ $\gamma \in \left\{ \alpha_{1},\alpha_{2},...,\alpha_{m},\beta_{1},\beta_{2},...,\beta_{m}\right\} $ }
		\State{reducible $ \gets \left\{ w\in\text{supp}\left(f'\right):\text{LPP}\left(\gamma\right)\text{ is a prefix of }w\right\} $}
		\If {reducible $\neq\emptyset$}
			\State{$w \gets \max\left\{ \text{reducible}\right\}$, $w' 
\gets \text{LPP}\left(\gamma\right)^{-1}w$}
			\State{$c \gets $coefficient of $w$ in $f'$}
			\State{$f' \gets f'-c\gamma w'$}
			\State{$\text{changed\_}f' \gets True$}
			\If {$\gamma==\alpha_{i}$ for some $1 \leq i \leq m$}
				\State {$s_{i} \gets s_{i} + cw'$}
			\EndIf
			\If {$\gamma==\beta_{i}$ for some $1 \leq i \leq m$}
				\State{$t_{i} \gets t_{i} + cw'$}
			\EndIf
		\EndIf
	\EndFor
\EndWhile
\State {$g\leftarrow s+Ct$, where $s=\left(s_{1},s_{2},...,s_{m}\right)^{T}$ and $t=\left(t_{1},t_{2},...,t_{m}\right)^{T}$}
\State{\Return}
\end{algorithmic}
\end{algorithm}

\subsection{From Ideals in $K\left[F\right]$ to Free $K$$\left[F\right]$-Modules}

Suppose now that we are given a finite generating set $f_{1},f_{2},...,f_{t}$
for a submodule $M$ of the free right $K\left[F\right]$-module $K\left[F\right]^{k}$.
In this subsection we explain how to algorithmically (i) find a basis
for $M$ and (ii) express an element $f\in M$ in this basis. One
possible direction for achieving this is generalizing Rosenmann's
algorithm to submodules of free $K\left[F\right]$-modules. For brevity,
we take here a different course of action -- of embedding $M$ as
an ideal in $K\left[F\right]$ to perform all calculations inside
the group algebra. We note that this approach works only for free
groups of rank larger than $1$. 
\begin{rem}
\label{rem: In rkF=00003D1 coefficients can be extracted in a different way}In
the case that $F$ is the free group on a single generator, the ring
$K\left[F\right]$ is the ring of Laurent polynomials in a single
variable over $K$, and in particular it is a (commutative) Euclidean
domain. Letting $Q$ be the $k\times t$ matrix whose columns are
$f_{1},f_{2},...,f_{t}$, we can algorithmically compute its column-style
Hermite Normal Form $H$, a form which generalizes the (reduced) column
echelon form (see \cite[Sec.~5.2]{Adkins2012}). The nonzero columns
of $H$ constitute a basis for $M$. The coefficients expressing any
$f\in M$ in this basis are then easily determined, one after the
other, with the $i$-th coefficient extracted by examining the row
containing the leading coefficient of the $i$-th basis element. This
method for extracting coefficients can then be used instead of Function
\ref{Algorithm: Express-In-Basis} in the algorithms we give for Theorem
\ref{thm: There exists algorithms for alg-free decomp}. However,
one can also compute the algebraic closure of the submodule $M$ in
$K\left[F\right]^{k}$ in by using its Smith Normal Form (see \cite[Sec.~5.3]{Adkins2012}).
If $\text{rk}M=d$, this computable form gives an explicit automorphism
$\varphi$ of $K\left[F\right]^{k}$ such that $\varphi\left(M\right)$
is contained in the direct summand $K\left[F\right]^{d}\times\left\{ 0\right\} ^{k-d}$
of $K\left[F\right]^{k}$. Furthermore, $\varphi\left(M\right)$ cannot
be contained in a proper free factor of $K\left[F\right]^{d}\times\left\{ 0\right\} ^{k-d}$
by rank considerations over a PID. We obtain 
\[
\varphi\left(M\right)\leq_{\text{alg}}K\left[F\right]^{d}\times\left\{ 0\right\} ^{k-d}\leq_{*}K\left[F\right]^{k}\ \ \ \Longrightarrow\ \ \ M\leq_{\text{alg}}\varphi^{-1}\left(K\left[F\right]^{d}\times\left\{ 0\right\} ^{k-d}\right)\leq_{*}K\left[F\right]^{k}.
\]
For the algebraic closure of a general extension $M\leq N\leq K\left[F\right]^{k}$
as in Theorem \ref{thm: There exists algorithms for alg-free decomp},
expressing $M$ in some basis of $N$ reduces to the case $N=K\left[F\right]^{k}$.
\end{rem}

Suppose from here on that the rank of $F$ is at least $2$. To have
concrete ideals to work with, choose two distinct basis elements $b_{1}$
and $b_{2}$ of $F$ and let $H_{k}\leq F$ be the subgroup generated
(freely) by $\left\{ b_{2}^{-i}b_{1}b_{2}^{i}\right\} _{1\leq i\leq k}$.
Let $I_{k}\leq K\left[F\right]$ be the (right) ideal generated by
$\left(b_{2}^{-i}b_{1}b_{2}^{i}-1\right)_{i\in\left\{ 1,2,..,k\right\} }$.
This generating set is in fact a basis for $I_{k}$ (see, for example,
\cite[Prop.~4.8 and Lem.~4.3]{Cohen2006}). Letting $\left(e_{i}\right)_{i=1}^{k}$
be the standard basis of $K\left[F\right]^{k}$ and $\left(e_{i}'\right)_{i=1}^{k}$
the Rosenmann basis for $I_{k}$, the $K\left[F\right]$-module homomorphism
$\varphi:K\left[F\right]^{k}\rightarrow I_{k}$ mapping $\varphi\left(e_{i}\right)=e_{i}'$
is an isomorphism. The following two functions implement $\varphi$
and its inverse:

\makeatletter
\renewcommand{\ALG@name}{Function}
\makeatother

\begin{algorithm}[H]
\noindent \caption{$K\left[F\right]^{k}$-To-$I_{k}$\label{Function: ToGroupAlgebra}}

\begin{algorithmic}[1] 
\Statex{\textbf{Input:} An element $f=\sum_{i=1}^{k}e_{i}f_{i}\in K\left[F\right]^{k}$, where $\left(e_{i}\right)_{i=1}^{k}$ is the standard basis of $K\left[F\right]^{k}$}.
\Statex{\textbf{Output:} Its image $\varphi\left(f\right)$ under the isomorphism $\varphi: K\left[F\right]^{k}\rightarrow I_{k}$ mapping the standard basis  to the Rosenmann basis of $I_{k}$.}
\vspace{2pt}
\State{$e_{1}',e_{2}',...,e_{k}' \gets $ Rosenmann's Algorithm on $\left(b_{2}^{-i}b_{1}b_{2}^{i}-1\right)_{i\in\left\{ 1,2,..,k\right\}}$.}
\State{\Return{$\sum_{j=1}^{k}e_{j}'f_{j}$}.}
\end{algorithmic}
\end{algorithm}

\begin{algorithm}[H]
\noindent \caption{$I_{k}$-To-$K\left[F\right]^{k}$\label{Function: From-Group-Algebra}}

\begin{algorithmic}[1] 
\Statex{\textbf{Input:} An element $f'\in I_{k}$.}
\Statex{\textbf{Output:} An element $f\in K\left[F\right]^{k}$ such that $f'=\varphi\left(f\right)$ for the isomorphism $\varphi: K\left[F\right]^{k}\rightarrow I_{k}$ mapping the standard basis $\left(e_{i}\right)_{i=1}^{k}$ to the Rosenmann basis of $I_{k}$.}
\vspace{2pt}
\State{$g_{1},g_{2},...,g_{k},f'' \gets$ Extract-Coefficients$\left(\left(b_{2}^{-i}b_{1}b_{2}^{i}-1\right)_{i\in\left\{ 1,2,..,k\right\} }, f'\right)$. \# $f''$ should vanish.}
\State{\Return{$\sum_{j=1}^{k}e_{j}g_{j}$}.}
\end{algorithmic}
\end{algorithm}
For every submodule $M\leq K\left[F\right]^{k}$ we can pull back
using $\varphi^{-1}$ the Rosenmann basis of $\varphi\left(M\right)\leq K\left[F\right]$
to obtain a basis for $M$ which we also call its \emph{$\varphi$-Rosenmann
basis}. The following two functions obtain the $\mathbb{\varphi}$-Rosenmann
basis and express an element in this basis:

\begin{algorithm}[H]
\noindent \caption{Get-Basis\label{Algorithm: Get-Basis}}

\begin{algorithmic}[1] 
\Statex{\textbf{Input:} A generating set $f_{1},f_{2},...,f_{t}\in K\left[F\right]^{k}$ for a submodule $M$.}
\Statex{\textbf{Output:} The basis $\alpha_{1},\alpha_{2},...,\alpha_{\ell}$ for $M$ such that $\left(\varphi\left(\alpha_{i}\right)\right)_{i=1}^{\ell}$ is the Rosenmann basis for $\varphi\left(M\right)$.}
\State{$f_{1}',f_{2}',...,f_{t}' \gets$ respective outputs of $K\left[F\right]^{k}$-To-$I_k$ on $f_{1},f_{2},...,f_{t}$.}
\State{$\alpha_{1}',\alpha_{2}',...,\alpha_{\ell}' \gets$ Rosenmann basis from the output of Rosenmann's algorithm on $f_{1}',f_{2}',...,f_{t}'$.}
\State{\Return{respective outputs of $I_k$-To-$K\left[F\right]^{k}$ on $\alpha_{1}',\alpha_{2}',...,\alpha_{\ell}'$}.}
\end{algorithmic}
\end{algorithm}

\begin{algorithm}[H]
\noindent \caption{Express-In-Basis\label{Algorithm: Express-In-Basis}}

\begin{algorithmic}[1] 
\Statex{\textbf{Input}: A generating set $f_{1},f_{2},...,f_{t}\in K\left[F\right]^{k}$ for a submodule $M$ and an element $f\in M$.}
\Statex{\textbf{Output}: The coefficients $g_{1},g_{2},...,g_{\ell}$ such that $f=\sum_{i=1}^{\ell}\alpha_{i}g_{i}$ where $\alpha_{1},\alpha_{2},...,\alpha_{\ell}$ is the $\varphi$-Rosenmann basis of $M$.}
\vspace{2pt}
\State{$f',f_{1}',f_{2}',...,f_{t}' \gets$ respective outputs of $K\left[F\right]^{k}$-To-$I_k$ on $f,f_{1},f_{2},...,f_{t}$.}
\State{$g_{1},g_{2},...,g_{\ell} \gets$ Extract-Coefficients$\left(\left(f_{i}'\right)_{i\in\left\{ 1,2,...,t\right\} },f'\right)$}
\State{\Return{$g_{1},g_{2},...,g_{\ell}$}.}
\end{algorithmic}
\end{algorithm}

\makeatletter
\renewcommand{\ALG@name}{Algorithm}
\makeatother

\subsection{The Algorithms for the Algebraic Closure, Primitivity Testing and
Algebraicity Testing for Extensions of Free $K\left[F\right]$-Modules.}

The following algorithm computes, given a matrix $Q\in\text{Mat}{}_{k\times m}\left(K\left[F\right]\right)$
and a finitely generated column extension $R_{Q}\leq M$, the associated
$Q$-dual $\iota\left(M^{*Q}\right)$ as in Theorem \ref{Thm: Q-duality in the Free Group Algebra}:
\begin{algorithm}[H]
\noindent \caption{Get-$Q$-dual\label{Algorithm: Get Q-dual}}
\begin{algorithmic}[1] 
\Statex{\textbf{Input}: A matrix $Q\in \text{Mat}{}_{k\times m}\left(K\left[F\right]\right)$ and a generating set $f_{1},f_{2},...,f_{t}\in K\left[F\right]^{k}$ for a right submodule $M\leq K\left[F\right]^{k}$ of rank $t'$ containing the columns $q_{1},q_{2},...,q_{m}$ of $Q$.}
\Statex{\textbf{Output}: A generating set $\ell_{1},\ell_{2},...,\ell_{t'}$ for the dual $\iota\left(M^{*Q}\right)$, as in the duality of Theorem \ref{Thm: Q-duality in the Free Group Algebra}}
\vspace{2pt}
\ForAll{ $1 \leq i \leq m$}
	\State{$g_{1i},g_{2i},...,g_{t'i} \gets$ Express-In-Basis$\left(\left(f_{j}\right)_{j=1}^{t}, q_i \right)$.}
\EndFor
\State{$G \gets$ the matrix $\left(g_{ij}\right)_{i,j}$ of dimensions $t'\times m$.}
\State{\Return{the columns $\ell_{1},\ell_{2},...,\ell_{t'}$ of $\iota\left(G\right)$, where $\iota$ is the left-right inversion of Definition \ref{def: iota}.}}
\end{algorithmic}
\end{algorithm}

We are now ready to implement the algorithms of Theorem \ref{thm: There exists algorithms for alg-free decomp}.
We begin with the algebraic closure, computed by a double application
of the duality, according to Theorem \ref{Thm: Q-duality in the Free Group Algebra}:

\begin{algorithm}[H]
\noindent \caption{\label{alg: Get-Algebraic-Free}Get-Algebraic-Closure}

\begin{algorithmic}[1]
\Statex{\textbf{Input}: Respective generating sets $\left(f_{i}\right)_{i=1}^{m}$ and $\left(h_{i}\right)_{i=1}^{n}$ for right submodules $M$ and $N$ of $K\left[F\right]^{k}$ such that $M\leq N$.}
\Statex{\textbf{Output}: A generating set $\ell_{1},\ell_{2},...,\ell_{t}$ for the unique submodule $L$ satisfying $M\overset{alg}{\leq}L\overset{*}{\leq}N$.}
\vspace{4pt}
\State{Initialize $Q$ to be the matrix whose columns are $f_{1},f_{2},...,f_{m}$.}
\State{$g_{1},g_{2},...,g_{s} \gets $ Get-Q-dual$\left(Q,\left(h_{i}\right)_{i=1}^{n}\right)$.}
\State{$\ell_{1},\ell_{2},...,\ell_{t} \gets $ Get-Q-dual$\left(\iota\left(Q\right),\left(g_{i}\right)_{i=1}^{s}\right)$.}
\State{\Return{$\ell_{1},\ell_{2},...,\ell_{t}$}.}
\end{algorithmic}
\end{algorithm}

For the purpose of algebraicity testing, one may either perform a
$Q$-dual once and check that the rank has not decreased, as in Theorem
\ref{Thm: Equivalent conditions of algebraic extensions via duality in Introduction},
or, alternatively, use the algebraic closure as in Corollary \ref{Cor: freeness testing via duality}.
We implement here the second option.

\begin{algorithm}[H]
\noindent \caption{Is-Algebraic-Extension\label{alg: Is-Algebraic-Extension}}

\begin{algorithmic}[1] 
\Statex{\textbf{Input}: Respective generating sets $\left(f_{i}\right)_{i=1}^{m}, \left(h_{i}\right)_{i=1}^{n}$ for right submodules $M$ and $N$ of $K\left[F\right]^{k}$ such that $M\leq N$.}
\Statex{\textbf{Output}: True if $M\leq_{\text{alg}}N$, False otherwise.}
\vspace{4pt}
\State{$\ell_{1},\ell_{2},...,\ell_{t} \gets $ Get-Algebraic-Closure($\left(f_{i}\right)_{i=1}^{m},\left(h_{i}\right)_{i=1}^{n}$).}
\State{\Return{Get-Basis($\left(\ell_{i}\right)_{i=1}^{t}$) $==$ Get-Basis($\left(h_{i}\right)_{i=1}^{n}$)}.}
\end{algorithmic}
\end{algorithm}
Note that we have used here the uniqueness of the Rosenmann basis
to test if two generating sets generate the same submodule. Finally,
for testing if an extension is free, we use again the algebraic closure,
as in Corollary \ref{Cor: freeness testing via duality}:

\begin{algorithm}[H]
\noindent \caption{Is-Free-Extension\label{alg: Is-Free-Extension}}

\begin{algorithmic}[1] 
\Statex{\textbf{Input}: Respective generating sets $\left(f_{i}\right)_{i=1}^{m}, \left(h_{i}\right)_{i=1}^{n}$ for right submodules $M$ and $N$ of $K\left[F\right]^{k}$ such that $M\leq N$.}
\Statex{\textbf{Output}: True if $M\leq_{*}N$, False otherwise.}
\vspace{4pt}
\State{$\ell_{1},\ell_{2},...,\ell_{t} \gets $ Get-Algebraic-Closure($\left(f_{i}\right)_{i=1}^{m},\left(h_{i}\right)_{i=1}^{n}$).}
\State{\Return{Get-Basis($\left(\ell_{i}\right)_{i=1}^{t}$) $==$ Get-Basis($\left(f_{i}\right)_{i=1}^{m}$)}.}
\end{algorithmic}
\end{algorithm}

\begin{rem}
A previous result of the current authors \cite[Cor.~5.3]{ErnstWest2024}
describes an algorithm testing for primitivity in the free group algebra
in certain scenarios. Given some $w\in F$ and an ideal $I\leq K\left[F\right]$
of rank $2$ containing $w-1$, all possible elements supported on
the subtree $\left[1,w\right]$ of the Cayley graph of $F$ (with
respect to some free generating set) are iterated in search of an
element extending $w-1$ to a basis of $I$. The algorithms in the
current paper present significant improvements. First, the algebraic
closure is computed, which allows for testing not only if $M$ is
itself a free factor of $N$ but also if $M$ is contained in \emph{some}
proper free factor of $N$. Second, the new algorithms have a wider
applicability: (i) they allow the larger ideal $N$ to be a submodule
of $K\left[F\right]^{k}$ and not only an ideal in $K\left[F\right]$,
and of arbitrary finite rank instead of rank $2$; (ii) they allow
the smaller submodule $M$ to be of arbitrary finite rank instead
of a principal ideal generated by an element of the form $w-1$; and
(iii) they allow the base field $K$ to be infinite, so long as its
operations are computable (for example, $K=\mathbb{Q}$). Finally,
in the previous algorithm the worst-case amount of calls to Rosenmann's
algorithm is exponential in the length of $w$ -- once for every
element supported on $\left[1,w\right]$. The current algorithms only
require a constant number of calls to Rosenmann algorithm. For example,
the algebraic closure of an extension $M\leq N$ requires two calls
to Rosenmann's algorithm - once for each $Q$-dual calculated (the
Rosenmann basis of the embedded ideal $I_{k}$ can be precalculated).
\end{rem}

\section{\label{sec: algs for free groups}An Algorithm for the Algebraic
Closure in Free Groups}

In this section we explain how the algorithms for free group algebras,
described in the previous section, can be applied to obtain analogous
algorithms for extensions of free groups. 

Similarly to Definition \ref{def: primitivity, free factor, algebraic, algebraic-free decomposition},
an extension $H\leq H'$ of free groups is called\emph{ free} if a
basis for $H$ can be extended to a basis of $H'$, in which case
$H$ is called a \emph{free factor} of $H'$ and we write $H\leq_{*}H'$.
The extension is \emph{algebraic} if $H$ is not contained in any
proper free factor of $H'$, in which case we write $H\leq_{\text{alg}}H'.$
If $\left\langle w\right\rangle \leq_{*}H'$ for some nontrivial $w\in H'$
we say that $w$ is \emph{primitive} in $H'$. If at least one of
$H$ or $H'$ is finitely generated then, analogously to Theorem \ref{Theorem: existence and uniqueness of Algebraic - free decomposition},
there exists a unique algebraic extension $L$ of $H$ which is a
free factor of $H'$, namely, a unique $L$ with $H\leq_{\text{alg}}L\leq_{*}H'$.
This $L$ is called the \emph{algebraic closure} of the extension
$H\leq H'$. This intermediate subgroup further satisfies $L=\bigcup_{H\leq_{\text{alg}}\widetilde{L}\leq H'}\widetilde{L}=\bigcap_{H\leq\widetilde{L}\leq_{*}H'}\widetilde{L}$
(see \cite[Thm.~3.16]{Miasnikov2007}).

Various algorithms exist for testing the freeness or algebraicity
of an extension $H\leq H'$ of finitely generated free groups, as
well as for finding its algebraic closure. The best known and often
most efficient approach is based on a classical work of Whitehead
\cite{Whitehead1936}. It involves repetitive applications of a specific
finite set of automorphisms -- the Whitehead automorphisms of $H'$.
Each application simplifies the complexity of $H$ with respect to
the current basis of $H'$ (see \cite{Gersten1984}), until $H$ is
either declared algebraic in $H'$ or is contained in a proper free
factor of $H'$. Repeating the process and keeping track of the automorphisms
used yields the algebraic closure of $H\leq H'$. Another approach
uses the fact that the set of algebraic extensions of $H$ is finite
and can be computed by considering all possible identifications of
vertices in a finite graph called its \emph{core graph}, and the algebraic
closure is then obtained as the maximal algebraic extension of $H$
(see \cite[Lem.~11.8]{Kapovich2002} and \cite[Appendix~A]{Puder2014}
for details). 

The approach presented here is based on the idea of considering specific
ideals, $J_{H}$ and $J_{H'}$, representing $H$ and $H'$, respectively\footnote{This technique has already given rise to (non-algorithmic) criteria
for primitivity and algebraicity (see Umirbaev's \cite{Umirbaev1994}
and \cite{Umirbaev1996} and Cohen's \cite[Thm.~D]{Cohen2006}). Our
approach will provide explicit algorithms, including a computation
of the algebraic closure. }. The ideals are in ${\cal A}=K[F]$, where $K$ is a field that can
be chosen arbitrarily. Following Cohen's \cite[, Sec.~4]{Cohen2006},
for every subgroup $H\leq F$ let $J_{H}\leq K\left[F\right]$ be
the right ideal of $K\left[F\right]$ generated by the elements $\left\{ h-1\right\} _{h\in H}$.
When $H=F$, the ideal $J_{F}$ is called the \emph{augmentation ideal}
of $K\left[F\right]$ as it is the kernel of the \emph{augmentation
map} $\varepsilon:K\left[F\right]\rightarrow K$ defined by $\varepsilon\left(\sum_{w\in F}\alpha_{w}w\right)=\sum_{w\in F}\alpha_{w}$.
We recall the following facts \cite[Lem.~4.1~and~4.2, example~following~4.3, Thm.~4.7, Thm.~D]{Cohen2006}:
\begin{thm}[\textbf{\emph{Cohen}}]
\textbf{\emph{\label{Thm Cohen}}} Let $F$ be a free group, $K$
a field and $H,H'\leq F$ subgroups. Then:
\begin{enumerate}
\item For every $h\in F$ we have $h\in H\iff$$h-1\in J_{H}$.
\item Elements $\left(h_{\alpha}\right)_{\alpha\in I}$ of $F$ form a generating
set for $H$ if and only if the elements $\left(h_{\alpha}-1\right)_{\alpha\in I}$
form a generating set for $J_{H}.$ In particular, $rk\left(H\right)=rk\left(J_{H}\right).$
\item $H\leq_{*}H'$ if and only if $J_{H}\leq_{*}J_{H'}$.
\end{enumerate}
\end{thm}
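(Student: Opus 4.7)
The plan is to treat the three parts in sequence, each building on its predecessor. The central auxiliary object will be, for every subgroup $H\leq F$, the right $K[F]$-module $M_{H}$ spanned by the right cosets $H\backslash F$, with $g\in F$ acting by $[Hg']\cdot g=[Hg'g]$, together with the canonical surjective right $K[F]$-module homomorphism $\pi_{H}\colon K[F]\twoheadrightarrow M_{H}$, $g\mapsto[Hg]$. Part~1 is then immediate: for $h\in H$ one has $\pi_{H}(h-1)=[Hh]-[H]=0$, so $J_{H}\subseteq\ker\pi_{H}$; conversely, if $h-1\in J_{H}$ then $[Hh]=[H]$, i.e.\ $h\in H$.

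For Part~2, the forward direction uses the telescoping identities $uv-1=(u-1)v+(v-1)$ and $u^{-1}-1=-u^{-1}(u-1)$, which by induction on the word length of $h\in\langle h_{\alpha}\rangle$ place $h-1$ in the right ideal $\sum_{\alpha}(h_{\alpha}-1)K[F]$. The reverse direction is then a short application of Part~1: set $H^{*}=\langle h_{\alpha}\rangle$, so the forward direction already gives $J_{H^{*}}=\sum_{\alpha}(h_{\alpha}-1)K[F]=J_{H}$, and any $h\in H$ satisfies $h-1\in J_{H}=J_{H^{*}}$, hence $h\in H^{*}$ by Part~1 applied to $H^{*}$. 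The forward direction also yields $\text{rk}(J_{H})\leq\text{rk}(H)$; for the matching lower bound, one must show that the generating set $\{h_{\alpha}-1\}$ coming from a free basis $\{h_{\alpha}\}$ of $H$ is actually a \emph{free} basis of $J_{H}$ over $K[F]$. I would establish this via the Nielsen--Schreier decomposition of $K[F]$ as a free left $K[H]$-module on a Schreier transversal, combined with Fox derivatives on the free group $H$ (extended to act on $K[F]$ through this decomposition), which produce left inverses to the map $(g_{\alpha})\mapsto\sum(h_{\alpha}-1)g_{\alpha}$ and so force each $g_{\alpha}$ to vanish on any relation.

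For Part~3, the forward direction is a clean consequence of Part~2 with the rank equality in hand: extending a free basis $\{h_{\alpha}\}$ of $H$ to a free basis $\{h_{\alpha}\}\cup\{k_{\beta}\}$ of $H'$ yields free bases $\{h_{\alpha}-1\}$ of $J_{H}$ and $\{h_{\alpha}-1\}\cup\{k_{\beta}-1\}$ of $J_{H'}$, so $J_{H}$ is a direct summand of $J_{H'}$ with complement $\sum_{\beta}(k_{\beta}-1)K[F]$. The reverse direction $J_{H}\leq_{*}J_{H'}\Rightarrow H\leq_{*}H'$ is the main obstacle. My plan is to pass through the algebraic closure: let $\tilde{H}$ be the group-theoretic algebraic closure of $H$ in $H'$, so $H\leq_{\text{alg}}\tilde{H}\leq_{*}H'$. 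The forward direction gives $J_{\tilde{H}}\leq_{*}J_{H'}$, and intersecting this with the assumed $J_{H}\leq_{*}J_{H'}$ via Proposition~\ref{Intersection preserves freeness of extension} yields $J_{H}\leq_{*}J_{\tilde{H}}$. The crucial and hardest step is then to show that the algebraic extension $H\leq_{\text{alg}}\tilde{H}$ of free groups forces $J_{H}\leq_{\text{alg}}J_{\tilde{H}}$ at the level of augmentation ideals; combined with Theorem~\ref{Theorem: existence and uniqueness of Algebraic - free decomposition}, this forces $J_{H}=J_{\tilde{H}}$, and then $H=\tilde{H}$ by Part~1. I would approach this last implication contrapositively: any proper intermediate free factor $J_{H}\leq L\lneqq_{*}J_{\tilde{H}}$ should be translated back, using the retraction it provides together with Part~1, into a proper intermediate free factor of $H$ in $\tilde{H}$, contradicting the algebraicity of $H\leq_{\text{alg}}\tilde{H}$.
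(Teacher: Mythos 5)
The paper does not prove Theorem~\ref{Thm Cohen}; it cites it to Cohen's work \cite{Cohen2006}, so there is no paper proof to compare against line by line, and I assess your proposal on its own terms, informed by the surrounding material in the paper.

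Parts~1 and~2 are essentially sound. The permutation module $M_{H}$ on $H\backslash F$ gives the clean criterion in Part~1; the telescoping identities handle the forward direction of Part~2; and your reduction of the reverse direction to Part~1 via $H^{*}=\langle h_{\alpha}\rangle$ is correct (note you also need $H^{*}\subseteq H$, which follows from Part~1 since each $h_{\alpha}-1\in J_{H}$). For the rank equality, your plan --- using the decomposition of $K[F]$ as a free left $K[H]$-module on a Schreier transversal to reduce to the classical fact that $\{h_{\alpha}-1\}$ is a free $K[H]$-basis of the augmentation ideal of $K[H]$ --- is the standard and correct route; Fox derivatives are one way to phrase that classical fact. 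Part~3, forward direction, then follows immediately.

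The genuine gap is in the reverse direction of Part~3, precisely at the step you flag as "the crucial and hardest." You correctly reduce, via the intersection argument and Proposition~\ref{Intersection preserves freeness of extension}, to showing that $H\leq_{\text{alg}}\tilde{H}$ implies $J_{H}\leq_{\text{alg}}J_{\tilde{H}}$. Your proposed contrapositive --- take a proper intermediate free factor $J_{H}\leq L\lneqq_{*}J_{\tilde{H}}$ and "translate it back, using the retraction it provides together with Part~1, into a proper intermediate free factor of $H$ in $\tilde{H}$" --- does not work as stated. The natural candidate $L'=\{h\in\tilde{H}\,:\,h-1\in L\}$ is a proper subgroup of $\tilde{H}$ containing $H$, but there is no reason for $L'$ to be a free factor of $\tilde{H}$, nor for $L$ to equal $J_{L'}$: a free factor of $J_{\tilde{H}}$ containing $J_{H}$ is not in general an augmentation ideal, and the module retraction $J_{\tilde{H}}\to L$ is not a group homomorphism, so it cannot be transported to a group retraction $\tilde{H}\to L'$. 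The implication you need is exactly the content of the paper's Proposition~\ref{prop: algebraicity transfers to group algebra}, whose proof there rests on the nontrivial theorem of Umirbaev (restated as Theorem~\ref{Thm Umirbaev}); your retraction-plus-Part-1 sketch does not substitute for that input. As written, the proposal therefore leaves the hard half of Part~3 unproved.
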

In particular, the freeness of an extension of free groups implies
the freeness of the extension of their associated ideals. It is also
clear from Theorem \ref{Thm Cohen} that if $J_{H}\leq_{\text{alg}}J_{H'}$
then $H\leq_{\text{alg}}H'$. Proposition \ref{prop: algebraicity transfers to group algebra}
below shows the converse also holds and thus, as stated in Theorem
\ref{thm: algebraic-free decomposition transfers to group algebra},
if $L$ is the algebraic closure of $H\le H'$, then $J_{L}$ is the
algebraic closure of $J_{H}\le J_{H'}$. We need the following result
of Umirbaev \cite{Umirbaev1996}, which we restate in the language
of $Q$-duality:
\begin{thm}
\label{Thm Umirbaev}\textbf{(Umirbaev, restated)} Let $H$ be a finitely
generated subgroup of a free group $F$, and let $Q=\left(h_{1}-1,h_{2}-1,...,h_{t}-1\right)\in\text{Mat}{}_{1\times t}\left(K\left[F\right]\right)$
for some generating set $h_{1},h_{2},...,h_{t}$ of $H$. Then the
$Q$-dual of the augmentation ideal $J_{F}\leq K\left[F\right]$ satisfies
$\text{rk}\left(J_{F}^{*Q}\right)=\text{rk}\left(L\right)$ where
$H\leq_{\text{alg}}L\leq_{*}F.$
\end{thm}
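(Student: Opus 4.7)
The theorem is, as its name suggests, a translation of Umirbaev's original result into the $Q$-duality language developed in Section~\ref{sec:The-Duality-Induced}, so the plan is to make $J_F^{*Q}$ explicit and match it against the object Umirbaev's theorem directly concerns. First I would note that $J_F$ is itself a finitely generated free right $K[F]$-module: fixing a free basis $b_1,\ldots,b_r$ of $F$, Theorem~\ref{Thm Cohen} (applied with $H=F$) ensures that $b_1-1,\ldots,b_r-1$ generate $J_F$, and this generating set has size $r=\text{rk}(J_F)$, hence is a basis. Consequently, in the construction of Proposition~\ref{prop: duality is well-defined before its definition} defining $J_F^{*Q}$, one may take $M_{\text{fin}}=J_F$ equipped with this basis.

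Unwinding equation~(\ref{eq: duality definition for R_Q}) with this choice, the matrix $B\in\text{Mat}_{r\times t}(K[F])$ is uniquely determined by the expansions $h_i-1=\sum_{j=1}^{r}(b_j-1)B_{ji}$ for $1\le i\le t$. Up to a side convention in the non-commutative ring $K[F]$, the entries $B_{ji}$ are precisely the Fox derivatives $\partial h_i/\partial b_j$, so that $B$ is the Fox Jacobian matrix of the generating tuple $(h_1,\ldots,h_t)$. By Definition~\ref{Def: Q-dual}, $J_F^{*Q}$ is therefore nothing but the left row space $L_B\leq K[F]^t$ of this Jacobian.

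Having made this identification, I would invoke Umirbaev's original theorem from \cite{Umirbaev1996}, whose content in its native formulation is precisely that the rank of the left $K[F]$-module spanned by the rows of the Fox Jacobian of any finite generating set of a finitely generated $H\leq F$ equals the rank of the algebraic closure $L$ of $H$ in $F$. Combined with the explicit identification above this yields $\text{rk}(J_F^{*Q})=\text{rk}(L_B)=\text{rk}(L)$.

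The main (and essentially only) substantive input is Umirbaev's theorem itself; the remaining work is dictionary translation between his formulation and the $Q$-duality formulation. The only subtlety I anticipate is the alignment of one-sided conventions: the Fox Jacobian is usually presented with coefficients appearing on one specific side of the basis elements $b_j-1$, whereas equation~(\ref{eq: duality definition for R_Q}) dictates the other side. This is a matter of choosing the correct variant of Fox calculus (or, equivalently, of transposing/conjugating by an appropriate invertible scalar), not a conceptual obstacle.
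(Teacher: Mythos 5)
The paper does not actually prove this theorem: it is stated explicitly as a restatement of Umirbaev's result from \cite{Umirbaev1996} into the language of $Q$-duality, and no argument is given. Your proposal supplies exactly the dictionary the paper leaves implicit: using the fact that $b_1-1,\ldots,b_r-1$ is a basis of $J_F$ (which does follow, as you say, from Theorem \ref{Thm Cohen} together with the FIR property that a generating set of cardinality equal to the rank must be a basis), the matrix $B$ in the defining equation (\ref{eq: duality definition for R_Q}) is characterized by $h_i-1=\sum_j (b_j-1)B_{ji}$, which is the right-handed Fox Jacobian, and $J_F^{*Q}$ is by definition its left row space. The remaining work is indeed only to invoke Umirbaev's original theorem in that formulation, exactly as the paper does. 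Your flagged caveat about one-sided Fox-derivative conventions is the right thing to be careful about, and the argument is otherwise complete and in line with the paper's intent.
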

Note that Umirbaev's result gives what we need when $H'=F$ is the
ambient group: in this case, Theorems \ref{Thm Cohen}, \ref{Thm Umirbaev}
and the results of Section \ref{sec:The-Duality-Induced} yield that
$J_{L}$ is an intermediate free factor $J_{H}\le J_{L}\le_{*}J_{H'}$
of the same rank as the algebraic closure, and therefore must coincide
with the algebraic closure. However, we want to establish this result
for an arbitrary $H'$. 
\begin{lem}
\label{lem: K=00005BH=00005D-ind implies K=00005BF=00005D ind for elements of K=00005BH=00005D^m}
Let $H\leq F$, $m\in\mathbb{Z}_{\geq1}$ and $B$ a subset of elements
of the free left $K\left[H\right]$-module $K\left[H\right]^{m}.$
Let $M_{H}$ and $M_{F}$ be the respective submodules of $K\left[H\right]^{m}$
and $K\left[F\right]^{m}$ generated by $B$. Then:
\begin{enumerate}
\item $B$ is a $K\left[H\right]$-linearly independent set if and only
if it is $K\left[F\right]$-linearly independent.
\item $M_{F}\cap K\left[H\right]^{m}=M_{H}$.
\item $\text{rk}M_{H}=\text{rk}M_{F}$.
\end{enumerate}
\end{lem}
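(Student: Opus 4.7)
My plan is to reduce all three assertions to a single direct-sum decomposition of $K\left[F\right]^{m}$ coming from coset representatives. Fix a set $S\subseteq F$ of left-coset representatives for $H$ in $F$ with $1\in S$, so that $F=\bigsqcup_{s\in S}sH$. This gives $K\left[F\right]=\bigoplus_{s\in S}sK\left[H\right]$ as $K$-vector spaces, and coordinate-wise one obtains
\[
K\left[F\right]^{m}=\bigoplus_{s\in S}s\cdot K\left[H\right]^{m},
\]
where the $s=1$ summand is exactly the embedded $K\left[H\right]^{m}$. Expanding each $c_{\alpha}\in K\left[F\right]$ uniquely as $c_{\alpha}=\sum_{s}sk_{\alpha,s}$ with $k_{\alpha,s}\in K\left[H\right]$, one computes entry-wise $sk_{\alpha,s}\cdot(b_{\alpha})_{i}=s\cdot(k_{\alpha,s}(b_{\alpha})_{i})\in sK\left[H\right]$, so
\[
\sum_{\alpha}c_{\alpha}b_{\alpha}=\sum_{s\in S}s\cdot\Bigl(\sum_{\alpha}k_{\alpha,s}b_{\alpha}\Bigr),
\]
with each inner sum lying in $M_{H}$. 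This yields the key identity $M_{F}=\bigoplus_{s\in S}s\cdot M_{H}$ as $K$-subspaces of $K\left[F\right]^{m}$.

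From here, part (2) is immediate: intersecting $M_{F}=\bigoplus_{s}s\cdot M_{H}$ with the $s=1$ summand of $K\left[F\right]^{m}$ yields $M_{F}\cap K\left[H\right]^{m}=1\cdot M_{H}=M_{H}$. Part (1) drops out in the same way: one direction is trivial since $K\left[H\right]\subseteq K\left[F\right]$, and for the other, if $\sum_{\alpha}c_{\alpha}b_{\alpha}=0$ in $K\left[F\right]^{m}$ with $c_{\alpha}\in K\left[F\right]$, the display above combined with directness of the $s$-decomposition forces $\sum_{\alpha}k_{\alpha,s}b_{\alpha}=0$ in $K\left[H\right]^{m}$ for every $s\in S$, and the $K\left[H\right]$-linear independence of $B$ then forces every $k_{\alpha,s}$, and hence every $c_{\alpha}$, to vanish.

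For part (3), I will use that $K\left[H\right]$ is itself a FIR -- since $H$, as a subgroup of a free group, is again free -- so the submodule $M_{H}\leq K\left[H\right]^{m}$ is automatically a free $K\left[H\right]$-module of a well-defined rank. Choosing any $K\left[H\right]$-basis $B_{H}$ of $M_{H}$, part (1) shows $B_{H}$ is also $K\left[F\right]$-linearly independent in $K\left[F\right]^{m}$, and it clearly still generates $M_{F}=K\left[F\right]\cdot M_{H}$; hence $B_{H}$ is a $K\left[F\right]$-basis of $M_{F}$, giving $\text{rk}\,M_{F}=\left|B_{H}\right|=\text{rk}\,M_{H}$. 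I do not see any genuinely hard step; the whole argument is just an unpacking of the $K$-vector space direct sum $K\left[F\right]=\bigoplus_{s}sK\left[H\right]$ coming from a left-coset transversal, after which the three claims fall out by inspecting disjointness of summands.
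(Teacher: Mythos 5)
Your proof is correct and rests on the same underlying idea as the paper's: decompose $K\left[F\right]$ into $K$-linear pieces indexed by the left cosets of $H$, so that each element of $M_{F}$ splits into $M_{H}$-components. Where the paper packages this as a family of coset projection maps $P_{C}\colon K\left[F\right]\to K\left[H\right]$ (which are right $K\left[H\right]$-module homomorphisms) and applies them coordinate-wise, you instead state the global direct-sum identity $M_{F}=\bigoplus_{s\in S}s\cdot M_{H}$ up front; the two formulations are equivalent, and your part (3) is identical to the paper's.
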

\begin{proof}
Clearly, $K\left[F\right]$-linear independence implies $K\left[H\right]$-linear
independence. For the other implication, for every left coset $C$
of $H$ in $F$, choose a representative $w_{C}\in C$ (so that $C=w_{C}H$),
and let $P_{C}:K\left[F\right]\rightarrow K\left[H\right]$ be defined
by
\[
P_{C}\left(\sum_{u\in F}\alpha_{u}u\right)=w_{C}^{-1}\left(\sum_{u\in C}\alpha_{u}u\right).
\]
For every $h\in H$ and $u\in F$ we have $u\in C\iff uh\in C$ and
so $P_{C}$ is a homomorphism of \uline{right} $K\left[H\right]$-modules.
Applying this property coordinate-wise, the map $\left(P_{C}\right)^{m}:K\left[F\right]^{m}\rightarrow K\left[H\right]^{m}$
satisfies for every $g\in K\left[F\right]$ and $f\in K\left[H\right]^{m}$
that $\left(P_{C}\right)^{m}\left(gf\right)=P_{C}\left(g\right)f.$
Now let $\sum_{f\in B}g_{f}f=0$ for some coefficients $g_{f}\in K\left[F\right]$
where all but finitely many of which vanish. Applying $\left(P_{C}\right)^{m}$
to both sides gives $\sum_{f\in B}P_{C}\left(g_{f}\right)f=0$, which
by $K\left[H\right]$-linear independence implies that $P_{C}\left(g_{f}\right)=0$
for all $f\in B$. Thus, $g_{f}=\sum_{C\in F/H}w_{C}P_{C}\left(g_{f}\right)=0$.
\\
For the second assertion, clearly $M_{H}\subseteq M_{F}\cap K\left[H\right]^{m}$.
Suppose that $a\in M_{F}\cap K\left[H\right]^{m}$. Since $a\in M_{F}$,
it can be expressed as $a=\sum_{f\in B}g_{f}f$ for some coefficients
$g_{f}\in K\left[F\right]$ all but finitely many of which vanish.
Choosing $w_{H}$ to be the trivial word and applying $\left(P_{H}\right)^{m}$
to both sides gives $a=\sum_{f\in B}P_{H}\left(g_{f}\right)f\in M_{H}$.\\
For the third assertion, let $B'$ be a basis for $M_{H}$. Then $B'$
is $K\left[H\right]$-linearly independent. By the first assertion,
it is a basis for the submodule it generates over $K\left[F\right]$.
But $B$ and $B'$ generate the same submodule over $K\left[H\right]$
and, hence, also over $K\left[F\right]$. Thus $B'$ is a basis for
$M_{F}$ too, and the claim follows.
\end{proof}
\begin{prop}
\label{prop: algebraicity transfers to group algebra}Let $H$ and
$H'$ be subgroups of a free group $F$. If $H\leq_{\text{alg}}H'$
then $J_{H}\leq_{\text{alg}}J_{H'}$.
\end{prop}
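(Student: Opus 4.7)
The plan is to reduce the claim to the rank criterion in Theorem \ref{Thm: Equivalent conditions of algebraic extensions via duality in Introduction} via an application of Umirbaev's Theorem \ref{Thm Umirbaev} performed inside the smaller group algebra $K[H']$, followed by a transfer of the resulting rank equality up to $K[F]$-modules. Assume first that $H$ is finitely generated (the principal case of interest). Fix a generating set $h_1,\ldots,h_t$ of $H$, let $\tilde Q=(h_1-1,\ldots,h_t-1)\in\text{Mat}_{1\times t}(K[H'])$, and let $Q$ denote the same matrix viewed over $K[F]$. By Theorem \ref{Thm Cohen}(2) the column space $R_Q$ equals $J_H$, so it suffices to show $\text{rk}(J_{H'}^{*Q})=\text{rk}(J_{H'})$.

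The first step is to apply Theorem \ref{Thm Umirbaev} with the ambient free group taken to be $H'$ itself. Since $H\leq_{\text{alg}}H'$, the algebraic closure of $H$ in $H'$ is $H'$, and Umirbaev's theorem yields $\text{rk}(\tilde J_{H'}^{*\tilde Q})=\text{rk}(H')=\text{rk}(\tilde J_{H'})$, the last equality again by Theorem \ref{Thm Cohen}(2).

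The second step is the scalar-extension transfer. Pick a finitely generated right $K[H']$-free factor $\tilde M_{\text{fin}}\leq_*\tilde J_{H'}$ containing $R_{\tilde Q}$, with a $K[H']$-basis $\tilde f_1,\ldots,\tilde f_s$, and write $\tilde Q=(\tilde f_1,\ldots,\tilde f_s)\tilde B$; by definition $\tilde J_{H'}^{*\tilde Q}=L_{\tilde B}$. Regarding $K[F]$ as a free left $K[H']$-module via a set of right $H'$-coset representatives in $F$, extension of scalars preserves direct sums, so the decomposition $\tilde J_{H'}=\tilde M_{\text{fin}}\oplus\tilde N$ yields a decomposition $J_{H'}=M_{\text{fin}}\oplus N$ inside $K[F]$, with $M_{\text{fin}}=\tilde M_{\text{fin}}\cdot K[F]$. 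The right-module analog of Lemma \ref{lem: K=00005BH=00005D-ind implies K=00005BF=00005D ind for elements of K=00005BH=00005D^m} (proved by the same argument using right cosets in place of left cosets) ensures that $\tilde f_1,\ldots,\tilde f_s$ remain $K[F]$-linearly independent, so they form a $K[F]$-basis of $M_{\text{fin}}$. Consequently the same matrix $\tilde B$, now viewed over $K[F]$, gives the $Q$-dual computation $Q=(\tilde f_1,\ldots,\tilde f_s)\tilde B$, and $J_{H'}^{*Q}$ is the left $K[F]$-submodule of $K[F]^t$ spanned by the rows of $\tilde B$. Applying the lemma once more to these rows gives $\text{rk}(J_{H'}^{*Q})=\text{rk}(\tilde J_{H'}^{*\tilde Q})$, and an analogous application to a basis of $\tilde J_{H'}$ gives $\text{rk}(J_{H'})=\text{rk}(\tilde J_{H'})$. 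Chaining the equalities yields $\text{rk}(J_{H'}^{*Q})=\text{rk}(J_{H'})$, and Theorem \ref{Thm: Equivalent conditions of algebraic extensions via duality in Introduction} concludes that $J_H\leq_{\text{alg}}J_{H'}$.

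The main obstacle I anticipate is the careful bookkeeping of the scalar-extension step: verifying that a $K[H']$-free factor of $\tilde J_{H'}$ remains a $K[F]$-free factor of $J_{H'}$, and that the right-module counterpart of Lemma \ref{lem: K=00005BH=00005D-ind implies K=00005BF=00005D ind for elements of K=00005BH=00005D^m} is available. Both hinge on the fact that $K[F]$ is free (hence flat) as a one-sided $K[H']$-module, realized concretely via $H'$-coset representatives in $F$, which makes the required module-theoretic identifications behave correctly.
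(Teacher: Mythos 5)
Your proposal is correct and follows essentially the same route as the paper: apply Umirbaev's Theorem~\ref{Thm Umirbaev} inside $K[H']$ to see that the $Q$-dual of the augmentation ideal has rank $\text{rk}(H')$, transfer that rank equality up to $K[F]$ via Lemma~\ref{lem: K=00005BH=00005D-ind implies K=00005BF=00005D ind for elements of K=00005BH=00005D^m}, and conclude with the rank criterion of Theorem~\ref{Thm: Equivalent conditions of algebraic extensions via duality in Introduction}. The paper's version is slightly leaner: it takes the finitely generated free factor to be $\tilde J_{H'}$ itself (whose basis $h'_1-1,\ldots,h'_m-1$ is also a $K[F]$-basis of $J_{H'}$ by Theorem~\ref{Thm Cohen}), so the scalar-extension bookkeeping and the right-module analog of the lemma in your argument are not needed.
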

\begin{proof}
Let $Q=\left(h_{1}-1,h_{2}-1,...,h_{t}-1\right)$ and $Q'=\left(h_{1}'-1,h_{2}'-1,...,h_{m}'-1\right)$,
where $h_{1},...,h_{t}$ and $h_{1}',...,h_{m}'$ are respective bases
for $H$ and $H'$. We work first over $K\left[H'\right]$. By Theorem
\ref{Thm Umirbaev}, for the augmentation ideal $I_{H'}$ of $K\left[H'\right]$
we have $\text{rk}\left(I_{H'}^{*Q}\right)=\text{rk}\left(H'\right)=m$.
Expressing $Q=Q'B$ for some $B\in\text{Mat}{}_{m\times t}\left(K\left[H'\right]\right)$.
By Lemma \ref{lem: K=00005BH=00005D-ind implies K=00005BF=00005D ind for elements of K=00005BH=00005D^m},
the rows of $B$ generate ideals of the same rank over $K\left[H'\right]$
and over $K\left[F\right]$, so $\text{rk}\left(J_{H'}^{*Q}\right)=m$
as well. Thus, taking the $Q$-dual for $J_{H'}$ does not decrease
rank, which by Theorem \ref{Thm: Equivalent conditions of algebraic extensions via duality in Introduction}
implies $J_{H}\leq_{\text{alg}}J_{H'}$. 
\end{proof}
By combining Theorem \ref{Thm Cohen} with Proposition \ref{prop: algebraicity transfers to group algebra},
we obtain the following:
\begin{thm}
\label{thm: algebraic-free decomposition transfers to group algebra}Let
$H\leq H'$ be an extension of finitely generated subgroups of the
free group $F$ and let $K$ be a field. If $L$ is the algebraic
closure of $H\leq H'$ then $J_{L}$ is the algebraic closure of the
associated extension $J_{H}\leq J_{H'}$ of right ideals in $K\left[F\right]$.
\end{thm}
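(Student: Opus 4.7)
The proof reduces to directly combining the previous results of this section with the uniqueness of the algebraic closure. The plan is to split the given decomposition $H \le_{\text{alg}} L \le_* H'$ into its two parts, transfer each part separately to the associated augmentation ideals, and then invoke the uniqueness part of Theorem~\ref{Theorem: existence and uniqueness of Algebraic - free decomposition}.

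First, I would verify that the setup is legal for applying that uniqueness theorem: since $H$ and $H'$ are finitely generated free groups, Theorem~\ref{Thm Cohen}(2) gives $\mathrm{rk}(J_H)=\mathrm{rk}(H)<\infty$ and $\mathrm{rk}(J_{H'})=\mathrm{rk}(H')<\infty$, so an algebraic closure of $J_H \le J_{H'}$ in $K[F]$ exists and is unique. Next, from $L \le_* H'$ I would apply Theorem~\ref{Thm Cohen}(3) to conclude $J_L \le_* J_{H'}$. From $H \le_{\text{alg}} L$ (noting $L$ is finitely generated as a free factor of the finitely generated $H'$) I would apply Proposition~\ref{prop: algebraicity transfers to group algebra} to get $J_H \le_{\text{alg}} J_L$.

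Concatenating these two relations yields
\[
J_H \le_{\text{alg}} J_L \le_* J_{H'}.
\]
By the uniqueness statement in Theorem~\ref{Theorem: existence and uniqueness of Algebraic - free decomposition}, the submodule sitting between $J_H$ and $J_{H'}$ that is simultaneously an algebraic extension of $J_H$ and a free factor of $J_{H'}$ is unique, and so $J_L$ must be exactly the algebraic closure of $J_H \le J_{H'}$.

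There is no real obstacle to overcome here; all the substance has been packaged into the preceding results. The only subtle point worth being careful about is the appeal to Proposition~\ref{prop: algebraicity transfers to group algebra}, which is itself nontrivial (it relies on Umirbaev's rank computation via Theorem~\ref{Thm Umirbaev} together with Lemma~\ref{lem: K=00005BH=00005D-ind implies K=00005BF=00005D ind for elements of K=00005BH=00005D^m} to move between $K[H']$-coefficients and $K[F]$-coefficients). Once that proposition is in hand, the current theorem is essentially a formal consequence.
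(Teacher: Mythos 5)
Your proof is correct and follows exactly the route the paper intends: split $H \le_{\text{alg}} L \le_* H'$, use Theorem~\ref{Thm Cohen}(3) for the free-factor half, Proposition~\ref{prop: algebraicity transfers to group algebra} for the algebraic half, and conclude by uniqueness of the algebraic closure from Theorem~\ref{Theorem: existence and uniqueness of Algebraic - free decomposition}. The paper gives no separate proof beyond the sentence ``By combining Theorem~\ref{Thm Cohen} with Proposition~\ref{prop: algebraicity transfers to group algebra}, we obtain the following,'' so you have simply spelled out the same argument.
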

\begin{rem}
Let $H\le F$ be a finitely generated subgroup. Consider the set ${\cal AE}(J_{H})$
of proper algebraic extensions of $J_{H}$ in $K[F]$, excluding $K[F]$
itself. Proposition \ref{prop: algebraicity transfers to group algebra}
shows that the set of ranks of the elements in ${\cal AE}(J_{H})$
contains the set of ranks of the elements in ${\cal AE}(H)$, the
set of proper algebraic extensions of $H$ in $F$. It is unclear
whether the minimum rank in both sets is identical -- see, e.g.,
\cite[Conj.~1.9]{ErnstWest2024}. 
\end{rem}
It is now clear that given subgroups $H\le H'\le F$, we can obtain
(a generating set for) $J_{L}$ where $L$ is the algebraic closure
of $H\le H'$. It still remains to explain how one can algorithmically
recover $L$ from $J_{L}$. We generalize to arbitrary fields a method
proposed by Rosenmann \cite[Sec.~7]{Rosenmann1993} for the field
of order 2. 
\begin{prop}
\label{prop:(Rosenmann) rosenmann basis of group ideals is binomials}Let
$H\leq F$ be a finitely generated. Then the Rosenmann basis $\alpha_{1},\alpha_{2},...,\alpha_{m}$
for the right ideal $J_{H}\leq K\left[F\right]$ generated by $\left\{ h-1\right\} _{h\in H}$
comprises of binomials of the form $\alpha_{i}=u_{i}-v_{i}$ where
$u_{i},v_{i}\in F$. Moreover, the elements $\left(u_{i}v_{i}^{-1}\right)_{1\leq i\leq m}$
form a basis for $H$.
\end{prop}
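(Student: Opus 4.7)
The plan is to identify the Rosenmann Schreier transversal with the set of minimum right-coset representatives, characterize each Rosenmann basis element from its LPP via a uniqueness argument, and conclude the second assertion from Cohen's Theorem \ref{Thm Cohen}(2).

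First, I identify the Rosenmann transversal $T_{J_{H}}$ with $T:=\{t\in F\,:\,t=\min(Ht)\}$, the set of minimum right-$H$-coset representatives under Rosenmann's well-order. The $K[F]$-linear surjection $\varepsilon_{H}:K[F]\twoheadrightarrow K[H\backslash F]$ defined by $w\mapsto Hw$ has kernel exactly $J_{H}$ (since $H$-coset-equivalent elements differ by sums of the form $(h-1)w'$). The order-compatibility of the well-order (right-multiplication preserves order in the absence of cancellation, prefixes come first) ensures that $T$ is closed under prefixes, and the coset decomposition gives $\mathrm{Span}_{K}(T)\oplus J_{H}=K[F]$. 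To see $T_{J_{H}}\subseteq T$, suppose $t\in T_{J_{H}}$ with $s:=\min(Ht)\neq t$; then $s-t\in J_{H}$ has LPP $t$, and reducing it to zero via Rosenmann's Gr\"{o}bner basis forces some Gr\"{o}bner-basis LPP to be a prefix of $t$, contradicting $t\in T_{J_{H}}$. Thus $T_{J_{H}}\subseteq T$, and since both are transversals for $H$ in $F$ of the same cardinality, $T=T_{J_{H}}$.

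Next, let $\alpha_{i}$ be a Rosenmann basis element with LPP $u_{i}$. The reducedness of the Rosenmann Gr\"{o}bner basis ensures no term of the tail of $\alpha_{i}$ has a Gr\"{o}bner-basis LPP as a prefix, whence the tail lies in $\mathrm{Span}_{K}(T_{J_{H}})=\mathrm{Span}_{K}(T)$. Writing $\alpha_{i}=u_{i}+s_{i}$ with $s_{i}\in\mathrm{Span}_{K}(T)$, the condition $\alpha_{i}\in J_{H}$ together with $\mathrm{Span}_{K}(T)\cap J_{H}=\{0\}$ forces $-s_{i}$ to be the unique $\mathrm{Span}_{K}(T)$-representative of $u_{i}$ modulo $J_{H}$. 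Since $\tau(u_{i}):=\min(Hu_{i})\in T$ satisfies $u_{i}-\tau(u_{i})\in J_{H}$, uniqueness yields $s_{i}=-\tau(u_{i})$, so $\alpha_{i}=u_{i}-\tau(u_{i})$. Setting $v_{i}:=\tau(u_{i})$, this proves the first assertion.

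For the second assertion, $\alpha_{i}=u_{i}-v_{i}=(u_{i}v_{i}^{-1}-1)\,v_{i}$ combined with the invertibility of $v_{i}$ in $K[F]$ shows that $\{u_{i}v_{i}^{-1}-1\}_{i=1}^{m}$ generates the same right ideal $J_{H}$ as $\{\alpha_{i}\}_{i=1}^{m}$. Theorem \ref{Thm Cohen}(2) then yields that $\{u_{i}v_{i}^{-1}\}_{i=1}^{m}\subseteq H$ generates $H$; since $m=\mathrm{rk}(J_{H})=\mathrm{rk}(H)$ by Theorem \ref{Thm Cohen}(2), and a free group of rank $m$ cannot be generated by fewer than $m$ elements, this generating set is automatically a basis. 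The main technical point underlying the plan is the tail-reducedness of the Rosenmann basis elements --- that the tail of each $\alpha_{i}$ lies entirely in $\mathrm{Span}_{K}(T_{J_{H}})$ --- which is a standard property of reduced Gr\"{o}bner bases that one would extract from the algorithm's specification in the preceding subsection.
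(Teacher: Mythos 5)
Your proof takes a genuinely different route from the paper's. You characterize the Rosenmann Schreier transversal $T_{J_H}$ as the set $T$ of minimal right-coset representatives under Rosenmann's well-order, and then deduce the binomial form from the assertion that the tail of each $\alpha_i$ lies in $\mathrm{Span}_K(T_{J_H})$: writing $\alpha_i=u_i+s_i$ with $s_i\in\mathrm{Span}_K(T)$, uniqueness of $\mathrm{Span}_K(T)$-representatives modulo $J_H$ forces $s_i=-\tau(u_i)$. The paper instead tracks Rosenmann's algorithm procedurally: it starts from the binomial generating set $(h_i-1)_i$ (via Theorem~\ref{Thm Cohen}) and observes that each elementary step of the algorithm either multiplies an element by a unit or adds another element with cancellation, both of which keep the support of size exactly two (sizes one and zero being ruled out by properness of $J_H$ and by rank counting). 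Your identification $T_{J_H}=T$ is a clean structural observation not present in the paper's proof, and your argument for $T_{J_H}\subseteq T$ is correct: if $t\in T_{J_H}$ and $s:=\min(Ht)<t$, then $s-t\in J_H$ has LPP $t$, and since reduction steps only remove a reducible element $w$ from the current support and introduce terms strictly below $w$, an inductive argument shows $t$ can never be eliminated (as $t$ is, by $t\in T_{J_H}$, not reducible), contradicting $\phi_{J_H}(s-t)=0$.

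The gap in your argument is precisely the tail-reducedness claim, which you acknowledge as the technical heart of the plan. The paper's exposition of Rosenmann's algorithm characterizes $T_I$ only via the LPPs of the Gr\"obner basis elements, and nowhere asserts that the Gr\"obner basis (or the Rosenmann basis of firsts) is interreduced in the sense that the tails of the $\alpha_i$ or $\beta_i$ themselves lie in $\mathrm{Span}_K(T_I)$. You flag this as a ``standard property of reduced Gr\"obner bases,'' but for this proof to be complete you would need to verify that Rosenmann's algorithm actually outputs an interreduced Gr\"obner basis, and this must be checked against Rosenmann's original paper, not against the summary in Section~\ref{subsec: Rosenmann's-Algorithm-for}. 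The paper's algorithm-tracking proof sidesteps this dependency entirely. (As a minor point shared with the paper: in the final step, Theorem~\ref{Thm Cohen}(2) yields only that $\{u_iv_i^{-1}\}_i$ is an $m$-element generating set of the rank-$m$ free group $H$; concluding it is a basis uses the Hopfian property of finitely generated free groups, which is left implicit in both treatments.)
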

\begin{proof}
The ideal $J_{H}$ has a finite basis $B$ consisting only of binomials
(elements of $K\left[F\right]$ with support of cardinality 2, see
Theorem \ref{Thm Cohen}). We apply Rosenmann's algorithm to this
particular basis $B$ of $J_{H}$. Each step of the algorithm either
(i) multiplies some $f\in B$ by a unit ($f\leftarrow f\cdot\alpha w$
where $\alpha\in K$ and $w\in F$) or (ii) adds to some $f\in B$
another $f'\in B$ ($f\leftarrow f+f'$) such that cancellation occurs,
i.e., such that there exists some $w\in\text{supp}\left(f\right)\backslash\text{supp}\left(f+f'\right)$.\footnote{In Rosenmann's language, ``Orbit Reductions'' only require multiplication
by a unit and the other reduction types require some combination of
steps of type (i) and (ii).} Multiplication by a unit clearly preserves the cardinality of the
support. For the second type, if $f$ and $f'$ are both binomials
and cancellation occurs in $f+f'$ then the cardinality of the support
of $f+f'$ is at most $2$, and in fact, must be exactly $2$: It
cannot be $1$ since $J_{H}$ is a proper ideal as it is contained
in the augmentation ideal $J_{F}$. It cannot be $0$ since this would
imply that the Rosenmann basis for $J_{H}$ has fewer elements than
the basis $B$. Thus, the Rosenmann basis of $J_{H}$ must be of the
form $\alpha_{1},\alpha_{2},...,\alpha_{m}$ for $\alpha_{i}=u_{i}+\lambda_{i}v_{i}$
where $u_{i},v_{i}\in F$ and $\lambda_{i}\in K^{*}$. As $\alpha_{i}\in J_{H}\subseteq J_{F}$,
we must have $\lambda_{i}=-1$. Multiplying each $f_{i}$ by the unit
$v_{i}^{-1}$, we obtain another basis $\left(u_{i}v_{i}^{-1}-1\right)_{1\leq i\leq m}$
for $J_{H}$. By Theorem \ref{Thm Cohen}, the elements $\left(u_{i}v_{i}^{-1}\right)_{1\leq i\leq m}$
form a basis for $H$.
\end{proof}

Using Theorem \ref{thm: algebraic-free decomposition transfers to group algebra}
and Proposition \ref{prop:(Rosenmann) rosenmann basis of group ideals is binomials},
we are now ready to compute the algebraic closure of an extension
of finitely generated free groups. For efficiency we use the field
of order $2$ but the algorithm works over other fields as well:

\begin{algorithm}[H]
\noindent \caption{\label{alg:Get-Algebraic-Free-For-Groups}Get-Algebraic-Closure-For-Free-Groups}

\begin{algorithmic}[1]
\Statex{\textbf{Input}: Respective generating sets $\left(h_{i}\right)_{i=1}^{m}$ and $\left(h_{i}'\right)_{i=1}^{n}$ for subgroups $H$ and $H'$ of a free group $F$ such that $H\leq H' $.}
\Statex{\textbf{Output}: A basis for the unique intermediate subgroup $L$ such that $H\overset{alg}{\leq}L\overset{*}{\leq}H'$.}
\vspace{4pt}
\State{$\ell_{1},\ell_{2},...,\ell_{t} \gets $ Get-Algebraic-Closure$\left(\left(h_{i}-1\right)_{i=1}^{m},\left(h_{i}'-1\right)_{i=1}^{n}\right)$ over the field of order $2$.}
\State{Apply Rosenmann's algorithm to $\ell_{1},\ell_{2},...,\ell_{t}$ to get the Rosenmann basis $u_{1}-v_{1},u_{2}-v_{2},...,u_{s}-v_{s}$.}
\State{\Return{$u_{1}v_{1}^{-1},u_{2}v_{2}^{-1},...,u_{s}v_{s}^{-1}$}.}
\end{algorithmic}
\end{algorithm}

Using any method for checking if two finite sets of elements of $F$
generate the same subgroup (for example, using Stallings foldings
and comparing the core graphs obtained, see \cite{Kapovich2002} for
details), we obtain algorithms for testing whether a given extension
of finitely generated free groups is algebraic and whether it is free:

\begin{algorithm}[H]
\noindent \caption{\label{alg:Is-Algebraic-Extension-For-Group}Is-Algebraic-Extension-For-Free-Groups}

\begin{algorithmic}[1] 
\Statex{\textbf{Input}: Respective generating sets $\left(h_{i}\right)_{i=1}^{m}$ and $ \left(h_{i}'\right)_{i=1}^{n}$ for subgroups $H$ and $H'$ of a free group $F$ such that $H\leq H' $.}
\Statex{\textbf{Output}: True if $H\overset{alg}{\leq}H' $, False otherwise.}
\vspace{4pt}
\State{$u_{1},u_{2},...,u_{s} \gets $ Get-Algebraic-Closure-For-Free-Groups($\left(h_{i}\right)_{i=1}^{m},\left(h_{i}'\right)_{i=1}^{n}$).}
\State{\Return{$\left\langle u_{1},u_{2},...,u_{s}\right\rangle ==\left\langle h_{1}',h_{2}',...,h_{n}'\right\rangle $}.}
\end{algorithmic}
\end{algorithm}

\begin{algorithm}[H]
\noindent \caption{\label{alg:Is-Free-Extension-For-Groups}Is-Free-Extension-For-Free-Groups}

\begin{algorithmic}[1] 
\Statex{\textbf{Input}: Respective generating sets $\left(h_{i}\right)_{i=1}^{m}$ and $ \left(h_{i}'\right)_{i=1}^{n}$ for subgroups $H$ and $H'$ of a free group $F$ such that $H\leq H' $.}
\Statex{\textbf{Output}: True if $H\overset{*}{\leq}H'$, False otherwise.}
\vspace{4pt}
\State{$u_{1},u_{2},...,u_{s} \gets $ Get-Algebraic-Closure-For-Free-Groups($\left(h_{i}\right)_{i=1}^{m},\left(h_{i}'\right)_{i=1}^{n}$).}
\State{\Return{$\left\langle u_{1},u_{2},...,u_{t}\right\rangle ==\left\langle h_{1},h_{2},...,h_{r}\right\rangle $}.}
\end{algorithmic}
\end{algorithm}

\section{\label{sec: intersection algorithm}An Algorithm for the Intersection
of Free Modules over a Free Group Algebra}

Let $K$ be a field and $F$ a free group of rank at least $2$, and
let $M$ and $N$ be finitely generated submodules of the free $K\left[F\right]$-module
$K\left[F\right]^{k}$. We give here an algorithm for computing a
free basis for the intersection $M\cap N$. The algorithm is a variant
of the well-known Zassenhaus' algorithm from linear algebra which,
given subspaces $U$ and $V$ of a finite-dimensional vector space,
calculates linear bases for the sum $U+V$ and for the intersection
$U\cap V$. Our variant bootstraps the ability to calculate a basis
for the sum (via Rosenmann's algorithm) to obtain a basis for the
intersection. The main idea in play here is that the intersection
$M\cap N$ and the sum $M+N$ appear together in the short exact sequence
of right $K\left[F\right]$-modules 
\[
0\rightarrow M\cap N\overset{j}{\rightarrow}M\oplus N\overset{\pi}{\rightarrow}M+N\rightarrow0,
\]
where $M\oplus N=\left\{ \left(x,y\right):x\in M,y\in N\right\} $
and the maps are $j\left(x\right)=\left(x,-x\right)$ and $\pi\left(x,y\right)=x+y$.
The module $M+N$ is a free $K\left[F\right]$-module (as a submodule
of $K\left[F\right]^{k}$), so the short exact sequence splits. In
particular, $j\left(M\cap N\right)$ is a direct summand of $M\oplus N$
so one can project a generating set of $M\oplus N$ onto $j\left(M\cap N\right)$
and apply $j^{-1}$ to obtain a generating set for $M\cap N$.

We implement this idea in the following way. We first use some fixed
embedding $\varphi:K\left[F\right]^{k}\hookrightarrow K\left[F\right]$
(see Function \ref{Function: ToGroupAlgebra}) to perform all calculations
inside the group algebra. Let $I$ and $J$ denote the respective
images of $M$ and $N$ under $\varphi$. Let $f_{1},...,f_{m}$ and
$g_{1},...,g_{n}$ be respective generating sets for $I$ and $J$,
and let $h_{1},h_{2},...,h_{r}$ be the Rosenmann basis of $I+J$.
Let $v\in K\left[F\right]^{m+n}$ and $u\in K\left[F\right]^{r}$
be the row vectors whose entries are the respective generating sets
$f_{1},f_{2},...,f_{m},g_{1},g_{2},...,g_{n}$ and $\gamma_{1},\gamma_{2},...,\gamma_{r}$
for $I+J$. Since both sets generate the same ideal, there exist matrices
$A\in\text{Mat}_{\left(m+n\right)\times r}\left(K\left[F\right]\right)$
and $B\in\text{Mat}_{r\times\left(m+n\right)}\left(K\left[F\right]\right)$
such that $vA=u$ and $uB=v$. Let $P$ and $Q$ be the block matrices
in $\text{Mat}_{\left(m+n\right)\times\left(m+n\right)}\left(K\left[F\right]\right)$
defined by $P=\begin{pmatrix}I_{m} & 0\\
0 & 0
\end{pmatrix}$ and $Q=\begin{pmatrix}0 & 0\\
0 & I_{n}
\end{pmatrix}$, where $I_{q}$ denotes the $q\times q$ identity matrix. The following
claim shows that these matrices allow us to obtain a generating set
for $I\cap J$. 
\begin{prop}
The entries of the vector $vP\left(I_{m+n}-AB\right)\in K\left[F\right]^{m+n}$
form a generating set for $I\cap J$.
\end{prop}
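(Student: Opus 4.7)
The plan is to set $E := I_{m+n}-AB$ and proceed in two halves: first show each entry of $vPE$ lies in $I\cap J$, and then show that every element of $I\cap J$ is expressible as $vPEc$ for some $c\in K[F]^{m+n}$.

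For containment, the key observation is that $vE=v-vAB=v-uB=v-v=0$. Writing $v=vP+vQ$, this gives $vPE=-vQE$. Since the rows of $vP$ live in $I$ (as they are $K[F]$-combinations of the $f_i$'s), every entry of $vPE$ lies in $I$; reading the same vector as $-vQE$, every entry also lies in $J$, and so the entries lie in $I\cap J$.

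For the reverse direction, I would first pass from the generating-set identity $u=vA$, $v=uB$ to the stronger fact that $BA=I_r$. This is the step that uses the hypothesis that $h_1,\ldots,h_r$ is the \emph{Rosenmann basis}, not merely a generating set: applying $u(BA)=vAB\cdot A\cdot$\textellipsis\ more cleanly, $uBA=vA=u$, so $u(BA-I_r)=0$, and freeness of $u$ forces $BA=I_r$. Consequently $AB\cdot AB=A(BA)B=AB$, so $AB$ is an idempotent and $E=I_{m+n}-AB$ is the complementary idempotent. Its image equals its fixed set, which is $\ker(AB)$. Moreover, using $vAB=v$ together with the freeness of $u$ once more, one checks $\ker(AB)=\ker\pi$, where $\pi\colon K[F]^{m+n}\to I+J$ sends $d\mapsto vd$.

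Finally, I would identify $I\cap J$ with $vP\cdot\ker\pi$: given $x\in I\cap J$, write $x=\sum f_ia_i=\sum g_jb_j$ and take $d=(a_1,\ldots,a_m,-b_1,\ldots,-b_n)^T\in\ker\pi$; then $vPd=x$. Conversely $vP\cdot\ker\pi\subseteq I$ trivially, and the identity $vPd=-vQd$ on $\ker\pi$ places it in $J$ as well. Combining the two paragraphs, as $c$ ranges over $K[F]^{m+n}$ the element $vPEc$ ranges over $vP\cdot\operatorname{image}(E)=vP\cdot\ker\pi=I\cap J$, so the entries of $vPE$ generate $I\cap J$ as a right $K[F]$-module.

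The main obstacle is the passage from $uBA=u$ to $BA=I_r$: without knowing that $u$ is a \emph{basis}, one only gets $BA=I_r$ modulo the (left) relations of $u$, which is insufficient to make $AB$ idempotent. Once this step is secured by Rosenmann's theorem, the remaining linear-algebra manipulations are routine mimicry of Zassenhaus' argument.
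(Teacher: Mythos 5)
Your proof is correct, and the second half takes a genuinely more structural route than the paper's. The forward inclusion is the same as the paper's: from $vE=0$ one gets $vPE=-vQE$, and the entries of $vPE$ are $K\left[F\right]$-combinations of the $f_i$'s while the entries of $-vQE$ are $K\left[F\right]$-combinations of the $g_j$'s. For the reverse inclusion, the paper argues directly: given $x\in I\cap J$ it takes $t=\left(p_1,\ldots,p_m,-q_1,\ldots,-q_n\right)^T$ with $vPt=x$, $vt=0$, and asserts $vP\left(I_{m+n}-AB\right)t=x$. You instead extract $BA=I_r$ from $u(BA-I_r)=0$ and freeness of $u$, conclude that $AB$ is idempotent, identify $\operatorname{im}(E)=\ker(AB)=\ker\pi$, and finally show $vP\cdot\ker\pi=I\cap J$. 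The two proofs rest on the same underlying fact, which you flag explicitly and the paper leaves tacit: because the entries of $u$ are a \emph{free basis} of $I+J$ (not just a generating set), $uBt=0$ forces $Bt=0$. The paper's displayed chain $vP\left(I_{m+n}-AB\right)t=vPt-vABt$ silently replaces $vPABt$ by $vABt$, which is the assertion $vQABt=0$; this is not automatic, but does hold because $uBt=vt=0$ and $u$ a basis give $Bt=0$, hence $ABt=0$. Your route earns the same conclusion through $BA=I_r$. Your closing remark is exactly right: if $u$ were allowed to be a redundant generating set, the statement would fail (e.g.\ $I=J=h\,K\left[F\right]$, $v=u=(h,h)$, $A=B=I_2$ give $vP(I_2-AB)=0$), so the appeal to Rosenmann's theorem is not cosmetic. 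Your version is if anything more careful than the paper's terse display in making the role of the basis explicit.
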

\begin{proof}
Let $L\leq K\left[F\right]$ be the ideal generated by the entries
of $vP\left(I_{m+n}-AB\right)$. Then $L$ is the column space of
$vP\left(I_{m+n}-AB\right)$, i.e., the set of elements of the form
$vP\left(I_{m+n}-AB\right)t$, where $t\in K\left[F\right]^{m+n}$
is some column vector. Since $vP=\left(f_{1},f_{2},...,f_{m},0,...0\right)$,
by Claim \ref{matrix product Column space contained}, the ideal $L$
is contained in the column space of $vP$, which is $I$. As $P+Q=I_{m+n}$,
\[
v\left(P+Q\right)\left(I_{m+n}-AB\right)=v\left(I_{m+n}-AB\right)=v-uB=v-v=0.
\]
Thus, $vP\left(I_{m+n}-AB\right)=vQ\left(AB-I_{m+n}\right)$, and
another use of Claim \ref{matrix product Column space contained}
now gives that $L$ is contained in the column space of $vQ=\left(0,0,...,0,g_{1},g_{2},...,g_{n}\right)$,
which is $J$. Hence, $L\subseteq I\cap J$. Conversely, suppose that
$x\in I\cap J$. Then we may express $x=\sum_{i=1}^{m}f_{i}p_{i}$
and $x=\sum_{i=1}^{n}g_{i}q_{i}$ for some $p_{1},p_{2},...,p_{m},q_{1},q_{2},...,q_{n}\in K\left[F\right]$.
Then for the column vector $t=\left(p_{1},p_{2},...,p_{m},-q_{1},-q_{2},...,-q_{n}\right)^{T}\in K\left[F\right]^{m+n}$
we have $vPt=x$ and $vt=0$, and so
\[
vP\left(I_{m+n}-AB\right)t=vPt-vABt=x-vt=x.
\]
Hence, $I\cap J\subseteq L$.
\end{proof}
The matrices $A$ and $B$ can be computed as follows. The matrix
$B$ is uniquely determined by the condition $uB=v$. The $i$-th
column in $B$ is composed of the $K\left[F\right]$-coefficients
expressing the $i$-th entry of $v$ in the Rosenmann basis $h_{1},h_{2},...,h_{r}$.
Hence, $B$ can be computed column by column using Algorithm \ref{Algorithm: coefficient extraction}.
As for $A$, it is generally not uniquely determined by the condition
$vA=u$ since the entries of $v$ are not necessarily a basis for
the ideal $I+J$. We can nonetheless compute $A$ by keeping track
of the steps performed when obtaining $u$ from $v$ during Rosenmann's
algorithm. Recall that the algorithm begins with $v_{0}:=v\in K\left[F\right]^{m+n}$.
The vector $v_{i}$ at the $i$-th step is obtained from $v_{i-1}$
by either multiplying one of its entries an entry by a unit of $K\left[F\right]$
or by adding one of its entries to another. Equivalently, $v_{i-1}$
is multiplied from the right by the appropriate elementary matrix
$E_{i}\in GL\left(K\left[F\right]^{m+n}\right)$ to obtain $v_{i}=v_{i-1}E_{i}$.
If there are $\ell$ steps in the algorithm, the Rosenmann basis $u$
is obtained from $v_{\ell}$ by removing entries equal to $0$ and
possibly reordering. Equivalently, $u=v_{\ell}E_{\text{proj}}$, where
$E_{\text{proj}}\in\text{Mat}_{\left(m+n\right)\times r}$ is the
corresponding projection matrix. Thus, $u=vE_{1}E_{2}...E_{\ell}E_{\text{proj}}$,
so we may choose $A=E_{1}E_{2}...E_{\ell}E_{\text{proj }}.$ If, during
Rosenmann's algorithm, we perform in parallel the same sequence of
steps on the columns of the identity matrix $I_{m+n}$ (instead of
$v_{0}$), we will end up with the desired matrix $I_{m+n}E_{1}E_{2}...E_{\ell}E_{\text{proj}}=A$.
We thus obtain the following algorithm:

\begin{algorithm}[H]
\noindent \caption{\label{alg:Intersection_of_modules}Find-Free-Module-Intersection}

\begin{algorithmic}[1] 
\Statex{\textbf{Input}: Respective generating sets $\left(f_{i}'\right)_{i=1}^{m}$ and $ \left(g_{i}'\right)_{i=1}^{n}$ for respective submodules $M$ and $N$ of the free right $K\left[F\right]$-module $K\left[F\right]^{k}$.}
\Statex{\textbf{Output}: A generating set $\left(h_{i}'\right)_{i=1}^{m+n}$ for the submodule $M\cap N$.}
\vspace{4pt}
\State{$f_{1},f_{2},...,f_{m},g_{1},g_{2},...,g_{n} \gets$ respective outputs of $K\left[F\right]^{k}$-To-$I_k$ on $f_{1}',f_{2}',...,f_{m}',g_{1}',g_{2}',...,g_{n}'$.}
\State{Initialize $v$ and $v_{cur}$ to be two copies of the row vector $\left(f_{1},f_{2},...,f_{m},g_{1},g_{2},...,g_{n}\right)\in K\left[F\right]^{m+n}$.}
\State{Initialize $A_{cur}$ to be the identity matrix of dimension $m+n$ over $K\left[F\right]$.}
\State{Perform Rosenmann's algorithm on $v_{cur}$ while performing each step on $A_{cur}$ as well: In each step perform $v_{cur}\leftarrow v_{cur}\cdot E$, where $E$ is the appropriate matrix, followed by $A_{cur}\leftarrow A_{cur}\cdot E$.}
\State{Initialize $A \gets A_{cur}$, $u \gets v_{cur}$ and $r \gets length(u)$.}
\ForAll{ $1 \leq i \leq m+n$}
	\State{$B_{1i},B_{2i},...,B_{ri},v_{i}' \gets$ Extract-Coefficients$\left(\left(u_{j}\right)_{j\in\left\{ 1,2,..,r\right\} }, v_{i}\right)$. \# $v_{i}'$ should vanish.}
\EndFor
\State{Initialize $B$ to be the matrix $\left(B_{ij}\right)$ of dimensions $r\times\left(m+n\right)$.}
\State{Initialize $P$ to be the square matrix of dimension $m+n$ given in block form by $\begin{pmatrix}I_{m} & 0\\0 & 0\end{pmatrix}$.}
\State{$h \gets vP\left(I_{m+n}-AB\right)$.}
\State{\Return{$\left(\ensuremath{I_{k}}\text{-To-}\ensuremath{K\left[F\right]^{k}}\left(h_{i}\right)\right)_{i=1}^{m+n}$}.}
\end{algorithmic}
\end{algorithm}

\section{Further Algorithms?\label{sec:Further-Algorithms?}}

We list several algorithmic questions relating to the current paper
which, to the best of our knowledge, are still open.
\begin{enumerate}
\item \textbf{Automorphic equivalence} \textbf{in free group algebras:}
Let $K$ be a field and $F$ a free group. Is there an algorithm for
deciding, given elements $f$ and $g$ in $K\left[F\right]$, whether
there exists a $K$-algebra automorphism $\varphi$ of $K\left[F\right]$
such that $\varphi\left(f\right)=g$? This would be analogous to Whitehead's
second algorithm which settles this question inside a free group \cite{Whitehead1936a}. 
\item \textbf{Algorithms for group algebras of surface groups:} Surface
groups (of negative Euler characteristic) share many commonalities
with free groups. For such a group $G$ and a field $K$, can we find
analogous algorithms for:
\begin{enumerate}
\item \textbf{Membership in an ideal:} determining, given a finite subset
$B$ of $K\left[G\right]$ and an element $f\in K\left[G\right]$,
if $f$ lies in the right ideal generated by $B$? This is achieved
by Rosenmann's algorithm in free group algebras. We remark that there
do exist algorithms for the analogous problem for subgroups of $G$,
at least when $G$ is an orientable surface group (see \cite{Magee2022}
for one such algorithm).
\item \textbf{Detecting free ideals:} It is shown in \cite[Cor.~3]{Avramidi2022}
that if $G$ is an orientable surface group of high enough genus,
then a submodule of $(K[G])^{d}$ which is generated by two elements
is necessarily free. (See also \cite{Avramidi2023} for recent extensions
of this result.) Given a finite subset $B$ of $K[G]$, is there an
algorithm to determine if the ideal generated by $B$ is a free right
$K[G]$-module? 
\item \textbf{Finding free bases and detecting primitivity:} Suppose we
are given a finite subset $B$ of $K[G]$ such that the right ideal
$I$ of $K\left[G\right]$ generated by $B$ is free as a right $K\left[G\right]$-module.
Are there algorithms for:
\begin{enumerate}
\item Finding a free basis for $I$ as a $K\left[G\right]$-module (as Rosenmann's
algorithm obtains for free group algebras)? 
\item Detecting whether a given element of $I$ is primitive (namely, part
of a basis) in $I$?
\end{enumerate}
\end{enumerate}
\item \textbf{A canonical combinatorial object representing an ideal in
free group algebras:} Let $K$ be a field, $F$ a free group and $I\leq K\left[F\right]$
a right ideal. Can we associate to $I$ a combinatorial object which
plays a role analogous to the Stallings' core graph associated to
a subgroup of $F$? For a survey on algorithmic uses of Stalling core
graphs see \cite{Delgado2022}.
\end{enumerate}

\subsection*{Acknowledgements}

We would like to thank G.~Avramidi for referring us to the papers
\cite{Avramidi2022,Avramidi2023} and for suggesting the algorithmic
question above for surface groups. This work was supported by the
European Research Council (ERC) under the European Union\textquoteright s
Horizon 2020 research and innovation programme (grant agreement No
850956), by the Israel Science Foundation, ISF grants 1140/23, as
well as National Science Foundation under Grant No. DMS-1926686.

\bibliographystyle{alpha}
\bibliography{../duality_article_bib_library}

\end{document}